%

\documentclass[aop,MSNbibl,dvips]{arximspdf}
\usepackage{subeqn}
\usepackage{graphicx}

%

\doi{10.1214/12-AOP781} 
\volume{41}
\issue{6}
\pubyear{2013}
\firstpage{4359}
\lastpage{4406}

\makeatletter
\def\underline{\textit}

\newcommand{\rright}{\right}
\newcommand{\lleft}{\left}
\newcommand{\angler}{\rangle}
\newcommand{\anglel}{\langle}
\newcommand{\rrVert}{\Vert}
\newcommand{\llVert}{\Vert}
\newcommand{\eqref}[1]{(\ref{#1})}

\newcommand{\mathrmm}{}
\newproclaim{defn}{Definition}[section]
\newtheorem{lemma}{Lemma}[section]
\newtheorem{thmm}{Theorem}[section]
\newproclaim{rmk}{Remark}[section]
\newtheorem{prop}{Proposition}[section]
\newtheorem{cor}{Corollary}[section]
%
%
\newtheorem{rhp}{Riemann--Hilbert Problem}[section]

\newcommand{\R}{\mathbb{R}}
\newcommand{\C}{\mathbb{C}}
\newcommand{\Z}{\mathbb{Z}}
\newcommand{\Prob}{\mathbb{P}}

\newcommand{\eg}{e.g.}
\newcommand{\M}{\mathcal{M}}
\newcommand{\N}{\mathcal{N}}
\newcommand{\NN}{\mathcal{N}_1}
\newcommand{\U}{\mathcal{U}}
\newcommand{\bY}{\mathbf{Y}}
\newcommand{\bQ}{\mathbf{Q}}
\newcommand{\bR}{\mathbf{R}}
\newcommand{\bS}{\mathbf{S}}
\newcommand{\bT}{\mathbf{T}}
\newcommand{\bA}{\mathbf{A}}
\newcommand{\bpsi}{\bolds{\Psi}}

\newcommand{\frakA}{\mathfrak{A}}
\newcommand{\frakB}{\mathfrak{B}}
\newcommand{\frakC}{\mathfrak{C}}
\newcommand{\frakD}{\mathfrak{D}}

\newcommand{\dd}{\,d}

\newcommand{\sig}{ {\sigma_3} }
\newcommand{\tbyt}[4]{\pmatrix{ #1 & #2 \vspace*{2pt}\cr #3 & #4 }}
\newcommand{\triu}[2][1]{\pmatrix{ #1 & #2  \vspace*{2pt}\cr0 & #1 }}
\newcommand{\tril}[2][1]{\pmatrix{ #1 & 0  \vspace*{2pt}\cr #2 & #1 }}
\newcommand{\diag}[2]{\pmatrix{ #1 & 0  \vspace*{2pt}\cr0 & #2 }}
\newcommand{\offdiag}[2]{\pmatrix{ 0 & #1  \vspace*{2pt}\cr #2 & 0 }}

\newcommand{\eps}{\varepsilon}
\newcommand{\tg}{\tilde\gamma}
\newcommand{\ty}{\tilde y}
\newcommand{\ts}{\tilde s}
\newcommand{\CC}{\mathcal{K}}


\newcommand{\Next}{E}

\newcommand{\FGUE}{F_{\mathrm{GUE}}}
\newcommand{\FGOE}{F}
\newcommand{\bfY}{\mathbf{Y}}

\newcommand{\UU}{\mathcal{U}}
\newcommand{\QQ}{\mathcal{Q}}
\newcommand{\RR}{\mathcal{R}}
\newcommand{\SSS}{\mathcal{S}}
\newcommand{\Ical}{\mathcal{I}}
\newcommand{\sector}{\mathcal{S}}

\newcommand{\re}{\operatorname{Re}}
\newcommand{\Ai}{\operatorname{Ai}}

\newcommand{\cro}{\mathrm{cr}}
\newcommand{\Cro}{\mathrm{CR}}
\newcommand{\Nes}{\mathrm{NE}}
\newcommand{\res}{\operatorname{Res}}
\newcommand{\Inv}{\operatorname{Inv}}
\newcommand{\Cov}{\operatorname{Cov}}
\newcommand{\Cor}{\operatorname{Cor}}
\makeatother

\begin{document}
\begin{frontmatter}

\title{Limiting distribution of maximal crossing and nesting of
Poissonized random matchings}
\runtitle{Crossing and nesting}

\begin{aug}
\author[A]{\fnms{Jinho} \snm{Baik}\ead[label=e1]{baik@umich.edu}\thanksref{t2}}
\and
\author[A]{\fnms{Robert} \snm{Jenkins}\corref{}\ead[label=e2]{rmjenkin@umich.edu}}
\runauthor{J. Baik and R. Jenkins}
\affiliation{University of Michigan}
\address[A]{Department of Mathematics\\
University of Michigan\\
Ann Arbor, Michigan 48109\\
USA\\
\printead{e1}\\
\phantom{E-mail:\ }\printead*{e2}} 
\thankstext{t2}{Supported by NSF Grants DMS-075709 and DMS-10-68646.}
\end{aug}

\received{\smonth{11} \syear{2011}}

%
\begin{abstract}
The notion of $r$-crossing and $r$-nesting of a complete
matching was introduced and a symmetry property was proved by Chen et
al. [\textit{Trans. Amer. Math. Soc.} \textbf{359} (2007) 1555--1575].
We consider random matchings of large size and study their maximal
crossing and their maximal nesting.
It is known that the marginal distribution of each of them converges to
the GOE Tracy--Widom distribution.
We show that the maximal crossing and the maximal nesting becomes
independent asymptotically, and we evaluate the joint distribution for
the Poissonized random matchings
explicitly to the first correction term.
This leads to an evaluation of the asymptotic of the covariance.
Furthermore, we compute the explicit second correction term in the
distribution function
of two objects: (a)~the length of the longest increasing subsequence of
Poissonized random permutation
and (b) the maximal crossing, and hence also the maximal nesting, of
Poissonized random matching.
\end{abstract}

%
\begin{keyword}[class=AMS]
\kwd[Primary ]{60B10}
\kwd{60F99}
\kwd[; secondary ]{33D45}
\kwd{35Q15}
\end{keyword}
\begin{keyword}
\kwd{Random matchings}
\kwd{crossing}
\kwd{nesting}
\kwd{orthogonal polynomials}
\kwd{Riemann--Hilbert problems}
\kwd{random matrices}
\end{keyword}

\end{frontmatter}

\section{Introduction} \label{sectionintroduction}



Let $\M_n$ be the set of complete matchings of $[2n]$. The size of $\M_n$ is $(2n-1)!!$.
It is well known that the number of complete matchings of $[2n]$ with
no crossings equals the $n$th Catalan number $C_n$, as is the number of
complete matchings with no nestings.
In \cite{5}, a notation of $r$-crossing and $r$-nesting was introduced:
given a complete matching $M=\{(i_1,j_1),\ldots,(i_n,j_n) \} \in\M_n$,
$\{ (i_{s_1}, j_{s_1}), \ldots,(i_{s_r},j_{s_r}) \}$ is called an
\underline{$r$-crossing}
if $i_{s_1} < i_{s_2} < \cdots< i_{s_r} < j_{s_1} < \cdots< j_{s_r}$ and
an \underline{$r$-nesting} if $i_{s_1} < i_{s_2} < \cdots< i_{s_r} <
j_{s_r} < \cdots< j_{s_2} < j_{s_1}$.
Let $\mathrm{cr}_n(M)$ be the largest number $k$ such that $M$ has a
$k$-crossing (maximal crossing) and $\mathrm{ne}_n(M)$ denote the largest
number $j$ such that $M$ has a $j$-nesting (maximal nesting).
See Figure~\ref{figmatching} for an example.
Various combinatorial properties of $\mathrm{cr}_n$ and $\mathrm
{ne}_n$ were studied
by Chen et al. in \cite{5}.
This paper subsequently generated a flurry of research concerning
crossings and nestings of many combinatorial objects; see, for example,
\cite{13} and also the survey article \cite{15}.

%
\begin{figure}

\includegraphics{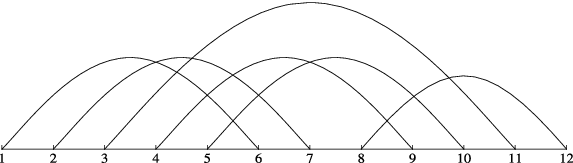}

\caption{A complete matching $M$ of [12]. In this sample $\mathrm
{cr}_6(M) =
4$, achieved by
$\{(1,6), (2,7),  (4,9), (5,10)\}$, and $\mathrm{ne}_6(M) = 2$,
achieved by $\{
(3,11), (4,9)\}$.}\label{figmatching}
\end{figure}

We may equip $\M_n$ with the uniform probability and regard $\mathrm{cr}_n$
and $\mathrm{ne}_n$ as random variables.
Let $\N$ be a Poisson random variable with parameter $t^2/2$ and
consider matchings of random size distributed as $2\N$. Let $\Cro_t$
and $\Nes_t$ denote $\mathrm{cr}_\N$ and $\mathrm{ne}_\N$,
respectively. The object
of this paper is to study the asymptotics of $\Cro_t$ and $\Nes_t$ as
$t\to\infty$.

One of the main results of \cite{5} is that the joint distribution of
$\mathrm{cr}_n$ and $\mathrm{ne}_n$ are symmetric. Hence $\Cro_t$
and $\Nes_t$ are
symmetrically distributed.
The limit of the marginal distribution of $\Nes_t$ can be obtained by
noting a bijection
between matchings and fixed-point-free involutions.
Let $\Inv_n$ be the set of permutations of size $2n$ consisting of only
2-cycles.
To $\sigma\in\Inv_n$ whose cycles are $(i_1,j_1),\ldots,(i_n,j_n)$,
associate the complete matching $\{(i_1,j_1),\ldots,(i_n,j_n)\}$.
This gives a natural bijection $\varphi$ from $\Inv_n$ onto $\M_n$.
Moreover, if we define $\tilde\ell_n(\sigma)$ as the length of the
longest \emph{decreasing} subsequence of $\sigma\in\Inv_n$, it is easy
to check that $\tilde\ell_n(\sigma)/2=\mathrm{ne}_n(\varphi(\sigma))$.
The limiting distribution of $\tilde{\ell}_n$, and also of $\tilde
{\ell
}_\N$ were obtained obtained earlier in~\mbox{\cite{2,3}}.
From this and the symmetry of $\mathrm{cr}_n$ and $\mathrm{ne}_n$,
Chen et al. \cite
{5} concluded that
for each $x\in\R$,
%
\begin{equation}
\label{eq1-0} \lim_{n \to\infty} \Prob \biggl\{ \frac{ \mathrm
{cr}_n - \sqrt {2n} }{2^{-1}(2n)^{1/6} } \leq x
\biggr\} = \lim_{n \to\infty} \Prob \biggl\{ \frac{ \mathrm
{ne}_n - \sqrt{2n}
}{2^{-1}(2n)^{1/6} } \leq x \biggr\} =
\FGOE(x),
\end{equation}
where $\FGOE(x)$ is the GOE Tracy--Widom distribution function from
random matrix theory \cite{16} defined in~\eqref{FGOE} below.
We also find a similar result for the Poissonized version,
%
\begin{equation}
\label{eq1} \lim_{t \to\infty} \Prob \biggl\{ \frac{\Cro_t -
t}{2^{-1}t^{1/3}} \leq x \biggr
\} = \lim_{t \to\infty} \Prob \biggl\{ \frac{\Nes_t -
t}{2^{-1}t^{1/3}} \leq x \biggr\} =
\FGOE(x) .
\end{equation}

We note that the length $\ell_n(\sigma)$ of the longest \emph
{increasing} subsequence of $\sigma\in\Inv_n$ has a different
distribution from $\tilde\ell_n$. For example, while $\tilde\ell_n(\sigma)$
is always an even integer, $\ell_n(\sigma)$ can be both
even or odd integers.
Moreover, it was shown in \cite{3} that $\frac{ \ell_n/2 - \sqrt{2n}
}{2^{-1}(2n)^{1/6}}$ converges to a random variable whose distribution
function is different from $\FGOE$; it is given by the so-called GSE
Tracy--Widom distribution.
Hence the joint distribution of $\mathrm{cr}_n$ and $\mathrm{ne}_n$
cannot be the
joint distribution of $\ell_n/2$ and $\tilde\ell_n/2$.

%
\begin{figure}

\includegraphics{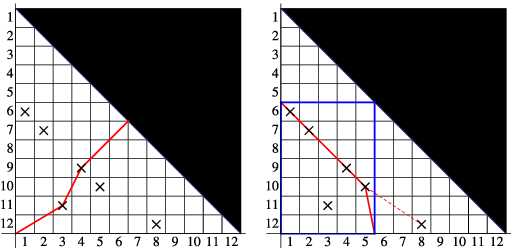}

\caption{The permutation matrix of the permutation
$\sigma$ 
corresponding to the matching
in Figure~\protect\ref{figmatching}. Since the matrix is symmetric,
only the lower triangular part is shown and the entries with element
$1$ are marked by $\times$.
\emph{On the left}:
The maximal up/right path (of length $2$) corresponding to $\mathrm{ne}_n(\varphi(\sigma))$.
\emph{On the right}:
The maximizing down/right path (of length $4$) corresponding to
$\mathrm{cr}_n(\varphi(\sigma))$
is realized by $\ell_{6}^{  6}$. Note that the longer down/right path
indicated by the dashed line is not allowed as it does not fit inside
the rectangles bounding the paths $\ell^{k}_6$ for any $k=1,\ldots,
12$.} \label{figpercolationpaths}
\end{figure}

A geometric meaning of $\mathrm{cr}_n(\varphi(\sigma))$ and $\mathrm
{ne}_n(\varphi
(\sigma))$ is the following.
Represent~$\sigma$ as a permutation matrix.
Geometrically we imagine the square of size $2n$ with $(1,1)$ entry at
the top left corner.
The condition that $\sigma$ consists of only 2-cycles implies that
the matrix is symmetric and the diagonal entries are zeros.
Then it is easy to see that $\mathrm{ne}_n(\varphi(\sigma))= \tilde
\ell_n(\sigma)/2$ equals the length of the longest up/right chain
consisting of $1$'s in the lower-triangle $\{(i,j)\dvtx 1\le j<i\le2n\}$.
On the other hand, for each $k=1,\ldots, 2n$, let $\ell_n^k(\sigma)$
denotes the length of the longest down/right chain consisting of $1$'s
in the rectangle with two opposite corners $(2n,1)$, $(k,k)$.
Then $\mathrm{cr}_n(\varphi(\sigma))$ equals the maximum of $\ell_n^k(\sigma)$
over $k=1, \ldots, 2n$ \cite{13}; see Figure~\ref{figpercolationpaths}.

\subsection{Joint distribution}

The first main result of this paper is the following result for the
joint distribution of $\Cro_t$ and $\Nes_t$.
Let $F(x)$ denote the GOE Tracy--Widom distribution defined by
%
\begin{equation} \label{FGOE}
\FGOE(x) := \exp \biggl[ \frac{1}{2} \int_x^\infty
\bigl( u(s)-q(s) \bigr) \,ds \biggr],\qquad u(x) := \int
_{\infty}^x q(s)^2 \,ds , 
\end{equation}
where $q(s)$ is the unique solution of Painlev\'e II, $q''(s) = s q(s)
+ q(s)^3$, such that $q(s) \sim\Ai(s)$ as $s \to\infty$ (where
$\Ai$ denotes the Airy function).
The solution $q(s)$ is called the Hastings--McLeod solution \cite{HM};
see also \cite{FIK}.

%
\begin{thmm}\label{thm3}
Set
%
\begin{equation}
\label{eq9} \tilde\Cro_t := \frac{\Cro_t - t}{2^{-1}t^{1/3}},\qquad \tilde
\Nes_t = \frac{\Nes_t - t}{2^{-1}t^{1/3}}.
\end{equation}
%
We have
%
\begin{eqnarray}
\label{eq11} &&\Prob \bigl\{ \tilde\Cro_t \leq x, \tilde
\Nes_t \leq x' \bigr\}
\nonumber
\\[-8pt]
\\[-8pt]
\nonumber
&&\qquad=\Prob\{ \tilde\Cro_t < x \} \Prob \bigl\{ \tilde
\Nes_t < x' \bigr\} 
+
\frac{\FGOE'(x)\FGOE'(x')}{t^{2/3}} + \mathcal{O} \bigl( t^{-1} \bigr).
\end{eqnarray}
\end{thmm}

This, together with a tail estimate, implies the asymptotics of the covariance.

%
\begin{cor}\label{cor1}
The covariance of $\Cro_t$ and $\Nes_t$ satisfies
%
\begin{equation}
\label{eq11-1-2} \Cov( \Cro_t , \Nes_t ) =
\tfrac{1}{4} + \mathcal{O} \bigl( t^{-1/3} \bigr).
\end{equation}
Hence, the correlation is asymptotically
%
\begin{equation}
\label{eq11-1-3} \rho( \Cro_t , \Nes_t ) =
\frac{1}{\sigma^2t^{2/3}} + \mathcal{O} \bigl( t^{-1} \bigr),
\end{equation}
where $\sigma^2= 1.6077810345\ldots$ is the variance of $F(x)$; cf.
page~862 of \cite{Bornemann}.
\end{cor}

We can also interpret $\Cro_t$ and $\Nes_t$ as ``height'' and ``depth''
of certain nonintersecting random walks. See Section~\ref{secpaths} below.

%
\begin{table}[b]
\caption{The exact correlation and covariance of $\mathrm{cr}_n$ and
$\mathrm{ne}_n$
for complete matchings of $[2n]$ for the first few nontrivial $n$'s.
Note that both statistics are strictly negative}\label{tabexact}
\begin{tabular*}{\textwidth}{@{\extracolsep{\fill}}lccc@{}}
\hline
$\bolds{[2n]}$ & $\bolds{\#\mathcal{M}_n}$ & $\bolds{\Cov( \mathrm{cr}_n, \mathrm{ne}_n )}$
& $\bolds{\Cor( \mathrm{cr}_n,
\mathrm{ne}_n )}$ \\
\hline
\phantom{0}4 & \phantom{00000}3 & $-1/9$ & $-1/$2 \\
\phantom{0}6 & \phantom{0000}15 & $-0.137777777$ & $-0.418918919$ \\
\phantom{0}8 & \phantom{000}105 & $-0.129614512$ & $-0.362983698$ \\
10 & \phantom{000}945 & $-0.132998516$ & $-0.331342276$ \\
12 &\phantom{0}10395 & $-0.143259767$ & $-0.309871555$ \\
14 & 135135 & $-0.151180948$ & $-0.29369603$2 \\
\hline
\end{tabular*}
\end{table}


We may apply the de-Poissonization argument \cite{10} to~\eqref{eq11}
to find a result for the joint distribution of $\mathrm{cr}_n$ and
$\mathrm{ne}_n$.
However, intuitively, for fixed $n$ and $M\in\M_n$, any $(i,j)\in M$
that is used to form the maximal crossing of $M$ cannot be used for
the maximal nesting of $M$. This indicates a negative correlation of
$\mathrm{cr}_n$ and $\mathrm{ne}_n$ for a fixed $n$, contrary to the positive
correlation of $\Cro_t$ and $\Nes_t$ found in the above corollary.
This is verified for small $n$ by direct computation: Table \ref{tabexact} shows
exact calculation of the covariance and correlation of $\mathrm{cr}_n$ and
$\mathrm{ne}_n$ for small values of~$n$.
For large $n$, a sampling of 5000 pseudo-random matchings of $[5000]$
yielded the sample covariance of $\tilde{\mathrm{cr}}_{2500}$ and
$\tilde{\mathrm{ne}
}_{2500}$ equal to $-0.0420258\ldots.$
Therefore, a~naive substitution of $t$ by $\sqrt{2n}$ in~\eqref{eq11}
only yields the following weaker result. A~further analysis is needed
to obtain the correction terms in the asymptotic behavior of $\mathrm{cr}_n$
and $\mathrm{ne}_n$.
A heuristic explanation for the positive correlation of the Poissonized
random matchings is that when $\Cro_t$ is large, it most likely due to
fact that the size of the matching is large, and hence the maximal
nesting of the matching is also likely to be large.



%
\begin{cor}\label{thm1}
Set
%
\begin{equation}
\label{eq2} \tilde \mathrm{cr}_n := \frac{\mathrm{cr}_n - \sqrt {2n}}{2^{-1}(2n)^{1/6}},\qquad \tilde
\mathrm{ne}_n = \frac{\mathrm{ne}_n - \sqrt{2n}}{2^{-1}(2n)^{1/6}}.
\end{equation}
For each $x, x' \in\R$,
%
\begin{equation}
\label{eq4} \Prob \bigl\{ \tilde \mathrm{cr}_n \leq x, \tilde
\mathrm{ne}_n \leq x' \bigr\} = \Prob\{ \tilde
\mathrm{cr}_n < x \} \Prob \bigl\{ \tilde\mathrm{ne}_n <
x' \bigr\} 
+ \mathcal{O} \biggl(
\frac{\sqrt{\log n}}{n^{1/6}} \biggr).
\end{equation}
\end{cor}


We compare Theorem~\ref{thm3} with the result of 
\cite{Bornemann10} on the joint distribution of the extreme eigenvalues
of Gaussian unitary ensemble (GUE). Let $\lambda_{\max}^{(n)}$ and
$\lambda_{\min}^{(n)}$ denote the largest and the smallest eigenvalues
of $n\times n$ GUE.
Setting
%
\begin{equation}
\label{eqG-1}\quad  \tilde{\lambda}_{\max}^{(n)}:=
2^{1/2}n^{1/6} \bigl(\lambda_{\max}^{(n)}-
\sqrt{2n} \bigr),\qquad \tilde{\lambda}_{\min}^{(n)}:=
2^{1/2}n^{1/6} \bigl(\lambda_{\min}^{(n)}+
\sqrt{2n} \bigr),
\end{equation}
it was shown in \cite{Bornemann10} that
%
\begin{eqnarray}
\label{eqG-2} &&\Prob \bigl\{ \tilde{\lambda}_{\max}^{(n)} \leq
x, \tilde{\lambda}_{\min}^{(n)} \leq x' \bigr\}
\nonumber
\\[-8pt]
\\[-8pt]
\nonumber
&&\qquad= \Prob \bigl\{ \tilde{\lambda}_{\max}^{(n)} < x \bigr\}
\Prob \bigl\{ \tilde{\lambda}_{\min}^{(n)} < x'
\bigr\} 
+ \frac{\FGUE'(x)\FGUE'(x')}{4n^{2/3}} + \mathcal{O} \bigl(
n^{-4/3} \bigr),
\end{eqnarray}
where $\FGUE$ is the \emph{GUE} Tracy--Widom distribution function
defined by
%
\begin{equation}
\FGUE(x) := \exp \biggl[ \int_x^\infty u(s) \,ds
\biggr]. \label{FGUE}
\end{equation}
%
It is interesting to study the joint distribution of the extreme
eigenvalues of Gaussian orthogonal ensemble (GOE)
and compare the result with~\eqref{eq11}. This will be done in a
separate paper.
It might also be interesting to see if the error term of~\eqref{eq11}
can be improved to $\mathcal{O}  ( t^{-4/3}  )$ as
in~\eqref{eqG-2}, but we do
not pursue this in this paper.

\subsection{Marginal distribution}

We also evaluate the second order term in the asymptotics expansion of
the marginal distributions of $\Cro_t$ and $\Nes_t$ explicitly.
Let $[a]$ denote the largest integer less than or equal to $a$.\eject

%
\begin{thmm}\label{thm4}
For $x\in\R$ and $t>0$, define $x_t$ by
%
\begin{equation}
\label{eqm-1} x_t:= \frac{[t+2^{-1}xt^{1/3}]-t}{2^{-1}t^{1/3}} + \frac 1{t^{1/3}}.
\end{equation}
For each $x \in\R$,
%
\begin{eqnarray}
\Prob\{ \tilde\Cro_t \leq x \} &=& \Prob\{ \tilde
\Nes_t \leq x \}
\nonumber
\\[-8pt]
\\[-8pt]
\nonumber
&=& \FGOE(x_t) - \frac{1}{20 t^{2/3}} \biggl[ 4F''(x)+
\frac13 x^2 F'(x) \biggr] + \mathcal{O} \bigl(
t^{-1} \bigr).
\end{eqnarray}
%
\end{thmm}


Note that since $\Prob\{ \Cro_t\le x \}$ has the same value for $x\in
[\ell, \ell+1)$ for a given integer~$\ell$, it is natural that the
leading term $F(x_t)$ of~\eqref{eq12} is expressed in terms of~$x_t$,
which contains $[t+2^{-1}xt^{1/3}]$.

In addition to this integral part correction, there is an additional
shift by $t^{-1/3}$ from $x$ in the definition of $x_t$. This is
responsible for the absence of the term of order $t^{-1/3}$ in the
expansion~\eqref{eq12}.
For classical ensembles in random matrix theory, there are several
papers that showed that a fine scaling can remove such a term (which
looks like a natural term to be present.) See \cite{ElKaroui} for the
Laguerre unitary ensemble, \cite{Johnstone} for Jacobi unitary and
orthogonal ensembles, \cite{Ma} for the Laguerre orthogonal ensemble
and \cite{JohnstoneMa} for Gaussian unitary and orthogonal ensembles.
A~similar result was obtained recently for random growth models and
intersecting particle systems in \cite{FerrariFrings}, including the
height of the so-called PNG model with flat initial condition.
It is well known that this is precisely the length of the longest
decreasing subsequence of random fixed-point-free involution and hence
$\Nes_t$. The result of \cite{FerrariFrings} in the context of this
paper is that $\Prob\{ \tilde\Nes_t \leq x \} = \FGOE(x_t) +
O(t^{-2/3})$. The above result finds the term of order $O(t^{-2/3})$ explicitly.

As in the joint distribution,
the evaluation of the second order term of $\Prob\{ \tilde\mathrm
{cr}_n \le
x \}$ does not immediately follow from the de-Poissonization argument
in \cite{10}.
It remains an open problem to evaluate the the error terms of $\Prob\{
\tilde\mathrm{cr}_n \le x \}$ asymptotically.

\subsection{Toeplitz minus Hankel with a discrete symbol}\label{secdet}

Set
%
\begin{equation}
\label{eq7} G_{k,j}(t) := \sum_{n=0}^\infty
g_{k,j}(n) \frac{t^{2n}}{(2n)!},
\end{equation}
where $g_{k,j}(n) := \# \{ M \in\M_n  \dvtx \mathrm{cr}_n(M) \leq k,
\mathrm{ne}_n(M)
\leq j \}$
so that
%
\begin{eqnarray}
\label{eq8} 
\Prob\{ \Cro_t \leq k, \Nes_t
\leq j \} &=& \sum_{n=0}^\infty\Prob\{
\cro_{\N} \leq k, \mathrm{ne}_{\N} \leq j | \N=n \} \Prob\{
\N= n \}
\nonumber
\\[-8pt]
\\[-8pt]
\nonumber
&=& e^{-t^2/2} G_{k,j}(t).
\end{eqnarray}
%
An explicit determinantal formula of $G_{k,j}(t)$ 
was obtained in \cite{5} which we describe now.

Stanley had shown earlier that matchings are in bijection with
oscillating tableaux of empty shape and of length $2n$; 
see Section 5 of \cite{5}.
This was further generalized to a bijection between partitions of a set
and so-called vacillating tableaux in \cite{5}.
In the same paper, it was shown that the maximal crossing (resp.,
nesting) of a partition
equals the maximal number
of rows (resp., columns) in any partitions appearing in the
corresponding vascillating tableau.

Since an oscillating tableau can be thought of as a walk in the chamber
of the affine Weyl group $\tilde C_n$,
$g_{k,j}(n)$ 
equals the number of walks with $n$ steps from $(j, j-1,\ldots,2,1)$ to
itself in the chamber $0<x_j<\cdots< x_2<x_1<j+k+1$ where each step is
a unit coordinate vector or its negative in $\Z^j$.
The number of such walks was evaluated by Grabnier in \cite{9} using
the Gessel--Viennot method of evaluation of nonintersecting paths.
This result implies (see the displayed equation before (5.3) in \cite
{5}) that 
%
\begin{equation}
\label{eq13} G_{k,j}(t) = \det \Biggl[ \frac{1}{m} \sum
_{r=0}^{2m-1} \sin \biggl( \frac{\pi r a}{m}
\biggr) \sin \biggl( \frac{\pi r b}{m} \biggr) e^{2t \cos(\pi r/m)}
\Biggr]_{a,b=1}^j,
\end{equation}
where
%
\begin{equation}
\label{eq14} m:= j+k+1.
\end{equation}
%

We prove Theorem~\ref{thm3} by analyzing the determinant~\eqref{eq13}
asymptotically.
For this purpose, we first re-formulate the determinant slightly.
By writing the product of the sine functions in terms of a sum of two
cosine functions and noting the realness of the entries,
we find that 
%
\begin{equation}
\label{eq15} G_{k,j}(t) = \det[ h_{a-b} - h_{a+b}
]_{a,b=1}^j,
\end{equation}
where
%
\begin{equation}
\label{eq16} h_\ell:= 
\frac{1}{2m}\sum
_{r=0}^{2m-1} e^{-i\pi r \ell/m}
e^{2t \cos(\pi r /m)}.
\end{equation}
%
This is the determinant of a Toeplitz matrix minus a Hankel matrix.
This structure is important in the asymptotic analysis.
An interesting feature of the above determinant is that the measure for
the Toeplitz determinant is not an absolutely continuous measure but a
discrete measure.

Let $\omega:= e^{\pi i/m}$ be the primitive $2m$th root of unity.
Define the discrete measure
%
\begin{equation}
\label{eqD-1} d\mu_m(z):= \frac{1}{2m}\sum
_{r=0}^{2m-1} e^{t(z+z^{-1})} \delta_{\omega^r}(z)
\end{equation}
on the circle.
Let $\pi_{n,m}(z)$ be the monic orthogonal polynomial of degree $n$
with respect to $d\mu_m$, defined by the conditions
%
\begin{equation}
\oint_{|z|=1} z^{-\ell}
\pi_{n,m}(z) \,d\mu_m(z) = 0,\qquad 0 \leq\ell< n.
\end{equation}
We emphasize the dependence on $m$ since later we will use the notation
$\pi_{n,\infty}$ to denote the case when ``$m=\infty$;'' the orthogonal
polynomials with respect to the absolutely continuous measure
$e^{t(z+z^{-1})} \frac{dz}{2\pi iz}$.
Note that $d\mu_m$ depends on the parameter $t$ and hence $\pi_{n,m}(z)$ also depends on $t$.
When we wish to emphasize this dependence on $t$, we write $\pi_{n,m}(z;t)$.

The fact that the $t$-dependence of the measure is from the factor
$e^{t(z+z^{-1})}$ implies the following basic formula, which is proved
in Section~\ref{secproofprop1} below.
Recall from~\eqref{eq14} that $m:=j+k+1$.

%
\begin{prop}\label{prop1}
We have
%
\begin{eqnarray}
\label{eqprop11}&& \log\Prob\{ \Cro_t \leq k, \Nes_t \leq
j \}
\nonumber
\\[-8pt]
\\[-8pt]
\nonumber
&&\qquad= \int_0^t \pi_{2j+1,m}(0;
\tau) \,d \tau+ \int_0^t \int
_0^s \QQ_j^m(\tau) \,d
\tau \,ds,
\end{eqnarray}
where
%
\begin{eqnarray}
\label{eqT-1} \QQ_j^m(\tau) &:=& - \bigl(
\pi_{2j,m}(0;\tau)\pi_{2j+2,m}(0;\tau) +\bigl|\pi_{2j+1,m}(0;
\tau)\bigr|^2 \bigr)
\nonumber
\\[-8pt]
\\[-8pt]
\nonumber
&&{}+ \pi_{2j,m}(0;\tau)\pi_{2j+2,m}(0;\tau)\bigl|\pi_{2j+1,m}(0;
\tau)\bigr|^2 .
\end{eqnarray}
\end{prop}

We obtain the asymptotics 
$\pi_{2j+\ell, m}(0,\tau)$ for $\ell=0,1,2$ 
by using the associated discrete version of the Riemann--Hilbert
problem; see, for example, \cite{BKMM}.
See Sections~\ref{secRHP-start},~\ref{secRHPanalysis} and~\ref
{secRHPpainleveregime} below.

We compare the analysis of this paper based on the formula~\eqref
{eqprop11} with the analysis of
the determinant of a similar Toeplitz minus Hankel matrix in \cite{3}.
Even though
the determinant in \cite{3} was for continuous measure (which is
precisely the one for the marginal distribution of $\Nes_t$; see
Section~\ref{secproofprop1} below),
the basic structure of the matrix is the same; a Toeplitz minus a
Hankel matrix.
Denoting the matrix by $D_j$, the approach of \cite{3} was to write
$D_j= D_{\infty} \prod_{n=j}^\infty\frac{D_n}{D_{n+1}}$ where
$D_\infty$ is the strong Szeg\"o limit, which exists in that particular
case, and analyze $D_n/D_{n+1}$, which can be evaluated from the
Riemann--Hilbert problem for the $n$th orthogonal polynomial.
For our case, since the measure is discrete, the strong Szeg\"o limit
does not apply.
Indeed $D_n=0$ for all large enough $n$.
Then alternatively one can still analyze $D_j$ by expressing $D_j=
D_0\prod_{n=1}^j \frac{D_n}{D_{n-1}}$ as was done in
\cite{BBD}.
However, this expression is more subtle to analyze since $\log
(D_n/D_{n+1})$ is not small when~$n$ is small (indeed it grows as $n$
decreases when $t$ is proportional to $j$) and this requires careful
cancellations of the terms in the product.
Though this was done for the leading term in \cite{BBD}, the evaluation
of the lower terms in the asymptotic expansion in this method becomes
more complicated.
A particularly useful point in using formula~\eqref{eqprop11} is that
we only need to consider the so-called full band case (and the
transitional case when a gap and a saturated region are about to open
up) in the Riemann--Hilbert analysis.
This makes the analysis much simpler, and it becomes easier to evaluate
the lower order terms.
On the contrary, if we use the expression $D_j= D_0\prod_{n=1}^j \frac
{D_n}{D_{n-1}}$, then we need to consider
both the so-called void-band case and the saturation-band case,
including the transitional cases, in the Riemann--Hilbert analysis (and
this is the reason for the need of cancellations mentioned above.)

The continuous Riemann--Hilbert problem for $\pi_{n, \infty}(z;t)$ was
analyzed asymptotically to the leading term in \cite{BDJ,2,BBD}.
We expand this work to the discrete counterpart and moreover, we
improve the analysis so that we compute explicit formulae for the first
three terms in the expansion of the solution in both the discrete and
continuous cases.
As a technical note, we remark that we use a different local map for
the so-called Painlev\'e parametrix related to the local problem for
the Riemann--Hilbert problem from the previous cases \cite{BDJ,CK}.
We adapt the map used in the recent paper \cite{BMiller} for a
different parametrix,
which seems to be useful for further analysis in other Riemann--Hilbert
problems.
For the purpose of this paper, we only
analyze the full band case (and the transitional case) of the discrete
Riemann--Hilbert problem.
The analysis for the full parameter set of the discrete
Riemann--Hilbert problem will be discussed somewhere else in the
context of Ablowitz--Ladik equations and Schur flows in integrable
systems.\looseness=1

A determinantal formula of the marginal distribution $\Prob\{ \Nes_t
\leq j \}$ can be obtained from the joint distribution by taking $k\to
\infty$ while keeping $j$ fixed.
Then we find a Toeplitz minus a Hankel determinant with symbol
$e^{t(z+z^{-1})}$.
Here too, the factor of $e^{t(z+z^{-1})}$ in the limiting measure
implies a formula for the marginal distribution analogous to~\eqref
{eqprop11}. See Section~\ref{secproofprop1} below.

The Toeplitz determinant with symbol $e^{t(z+z^{-1})}$ is known to be
describe the distribution of the length of the longest increasing
subsequence of a random permutation \cite{Gessel}.
By using a formula similar to~\eqref{eqprop11}, the analysis of this
paper implies the following result.

\subsection{Longest increasing subsequence of random
permutation}\label{secellt}


Consider the symmetric group $S_n$ of permutations of size $n$ and
equip it with the uniform probability.
Let $l_n(\pi)$ denote the length of the longest increasing subsequence
of $\pi\in S_n$.
Let $\NN$ be a Poisson random variable with parameter $t^2$ and
let $L_t$ denotes $l_{\NN}$.
It was shown in \cite{BDJ} that
$ \frac{ L_t - 2t}{t^{1/3}}$ converges to the \emph{GUE} Tracy--Widom
distribution~\eqref{FGUE}.
We evaluate the next term of the asymptotic expansion explicitly.

%
\begin{thmm}\label{thm5}
For each $x \in\R$,
%
\begin{eqnarray}
\label{eqthm5-2} &&\Prob \biggl\{ \frac{ L_t - 2t }{t^{1/3}}\leq x \biggr\} 
\nonumber
\\[-8pt]
\\[-8pt]
\nonumber
&&\qquad=
\FGUE \bigl(x^{(t)} \bigr)
- \frac{1}{10t^{2/3}} \biggl[ \FGUE''(x)+
\frac{1}{6} x^2 \FGUE'(x) \biggr] + \mathcal{O}
\bigl( t^{-1} \bigr),
\end{eqnarray}
%
where
%
\begin{equation}
\label{eqtm5-2-1} x^{(t)}:= \frac{[2t+xt^{1/3}]-2t}{t^{1/3}}.
\end{equation}
\end{thmm}

The study in \cite{FerrariFrings} also considered the height of the
so-called PNG model with the droplet initial condition, which is
distributed precisely as $L_t$, and showed that
the above distribution function is $\FGUE(x^{(t)}) + O(t^{-2/3})$.
The above theorem evaluates the error term explicitly.

For the Gaussian unitary ensemble, Choup \cite{Choup2006,Choup2008}
evaluated the distribution of the largest eigenvalue explicitly up to
the term of order $O(n^{-2/3})$ which corresponds to
the term of order $t^{-2/3}$ in the above expansion.
It would be interesting to compare the term in the above theorem with
the formula of \cite{Choup2006,Choup2008}.

\subsection{Organization of paper}

In Section~\ref{secpaths}, we consider a nonintersecting random
process that gives rise to
$\Cro_t$ and $\Nes_t$. Proof of Proposition~\ref{secproofprop1} is
given in Section~\ref{secproofprop1}. The Riemann--Hilbert problem is
introduced in Section~\ref{secRHP-start}, and is analyzed
asymptotically in Sections~\ref{secRHPanalysis} and~\ref
{secRHPpainleveregime}.
Theorem~\ref{thm3} and Corollary~\ref{cor1} are proved in
Secton~\ref
{secGjkt}, and Theorems~\ref{thm4} and~\ref{thm5} are
proved in
Section~\ref{secmarginalproofs}.
We prove
Corollary~\ref{thm1} in Section \ref{secde-Poissonization} using a
de-Poissonization argument.
Finally, the Riemann--Hilbert problem for the Painlev\'e II equations
that are needed to model the local parametrix of the Riemann--Hilbert
problem for orthogonal polynomials are discussed in Section \ref{secpainleve}.

\section{Height and depth of nonintersecting continuous-time simple
random walks}\label{secpaths}

In Section~\ref{secdet} we discussed a relation between $\mathrm
{cr}_n$ and
$\mathrm{ne}_n$ and a walk in the chamber $\{0<x_j< \cdots<x_2
<x_1<j+k+1\}$
of the affine Weyl group $\tilde{C}_n$. In this section, we give an
interpretation of $\Cro_t$ and $\Nes_t$ in terms of the ``height'' and
``depth'' of continuous-time simple random walks.

Let $N^+(\tau)$ and $N^-(\tau)$ be two independent Poisson processes of
rate 1 and let $Z(\tau) := N^+(\tau) - N^-(\tau)$
be a continuous-time simple random walk. Then $Z(\tau)$ is an $\mathbb
{Z}$-valued Markov process with the transition probability $p_s(a,b) =
p_s(a-b)$ where
$p_t(a) = e^{-2t}\sum_{n \in\Z} \frac{t^{2n+a}}{n!(n+a)!}= p_t(-a)$
for $a\in\Z$.
Here we used the convention that $1/ n! \equiv0$ if $n<0$. Set
%
\begin{equation}
\label{eq20} \phi(z) := \sum_{a \in\Z} \bigl(
e^{-2t} p_t(a) \bigr) z^{-a} = e^{t(z+z^{-1})}.
\end{equation}
Then we have
%
\begin{equation}
\label{eq21} p_t(a) = e^{-2t} \phi_{-a}=
e^{-2t} \phi_a, \qquad \phi_a:=
\oint_{|z|=1} z^{-a} \phi(z) \frac{dz}{2\pi i z} .
\end{equation}

Let $Z_i(\tau), i=0,1,2,\ldots,$ be independent copies of $Z(\tau)$,
and consider the infinite system of processes $X_i(\tau) = Z_i(\tau) -
i, i = 0,1,2,\ldots.$ Fix a number $t>0$.
We will consider the process conditioned on the event that (a) $X_i(t)
= X_i(0)$ for all $i$ and (b) $X_i(\tau)$ do not intersect in time
$[0,t]$, that is, $X_0(\tau) > X_1(\tau) > \cdots$ for all $\tau\in
[0,t]$. A precise interpretation will be given below.
Such nonintersecting continuous-time simple random walks have been
studied, for example, in \cite{OConnell02,1,AdlerFvM}.

Define the ``height'' $K := \max_{\tau\in[0,t]} X_0(\tau)$
and define the ``depth'' $J$ as the smallest index such that $X_i(\tau)
= i$ for all $\tau\in[0,t]$ and for all $i = J, J+1, \ldots,$ in other
words, only the top $J$ processes moved in the interval $[0,t]$. We are
interested in the joint distribution of $J$ and $K$ conditional of the
above event satisfying (a) and (b).

Precisely, fix $N \in\mathbb{N}$ and let $\frakA_N$ and $\frakB_N$ be
the events defined as
%
\begin{eqnarray}
\label{eq22} \frakA_N&:=& \bigl\{ X_i(t) =
X_i(0) = -i, i=0,1,\ldots, N-1 \bigr\},\\
%
%
\label{eq23} \frakB_N&:=& \bigl\{ X_0(\tau)
>X_1(\tau) > \cdots> X_{N-1}(\tau) \geq-N+1, \tau \in[0,t]
\bigr\}.
\end{eqnarray}
The condition that $X_{N-1}(\tau) \geq-N+1$ for all $\tau\in[0,t]$
is natural 
because $J$ is likely to be a finite number and by definition of $J$,
$X_{J-1}(\tau) \geq X_{J-1}(0)$ for all $\tau\in[0,t]$. 
The joint distribution of $K$ and $J$ is interpreted as
%
\begin{equation}
\label{eq24} P(k, j) := \lim_{N \to\infty} \Prob( K \leq k, J \leq j |
\frakA_N \cap\frakB_N ).
\end{equation}

%
\begin{lemma}
Let $K$ and $J$ be the ``height'' and ``depth,'' respectively, defined
above. Then
%
\begin{equation}
\label{eq24-1} P(k,j) = e^{-t^2/2} G_{k,j}(t),
\end{equation}
where $G_{k,j}(t)$ is given in~\eqref{eq15}.
\end{lemma}

\begin{pf}
We first evaluate $\Prob(\frakA_N \cap\frakB_N)$. The condition that
$X_i(\tau) > -N$, $i=0, \ldots, N-1$, implies that $X_i(\tau)$ has an
absorbing boundary at $-N$. Since the transition probability of $X_i$
with an absorbing boundary at $-N$ is $p_t(a,b) - p_t(-2N-a,b)$, the
Karlin--McGregor formula \cite{11} of nonintersecting probability
applied to continuous-time simple random walks (see, \eg, \cite
{AdlerFvM,1}) implies then that
%
\begin{eqnarray}
\label{eq25} %
\Prob( \frakA_N \cap \frakB_N
) &= &\det \bigl[ p_t(-a,-b)-p_t(-2N+a,-b)
\bigr]_{a,b=0}^{N-1}
\nonumber
\\[-8pt]
\\[-8pt]
\nonumber
&=& e^{-2tN} \det[ \phi_{a-b}-
\phi_{a+b} ]_{a,b=1}^{N}. %
\end{eqnarray}
%

Second, we evaluate $\Prob( \{ K \leq k, J \leq j \} \cap\frakA_N
\cap\frakB_N )$. We assume that $N$ is large so that $N \geq j$. By
the definition of $K$ and $J$,\vadjust{\goodbreak} the desired probability equals $\Prob
(\frakC\cap\frakD)$ where $\frakC$ and $\frakD$ are independent
events defined as follows.
$\frakC$ is the event that the top $j$ processes, $X_0(\tau), \ldots
,X_{j-1}(\tau)$, satisfy the two conditions (a) $X_i(t) = X_i(0)$ for
all $i = 0,\ldots,j-1$ and (b) $-j+1 \leq X_{j-1}(\tau) < \cdots<
X_0(\tau) \leq k$ for all $\tau\in[0,t]$, that is, the $j$
nonintersecting paths are not absorbed at the boundaries $-j$ and $k+1$.
$\frakD$ is the event that $X_i(\tau) = -i$ for all $i = j,j+1,\ldots
,N-1$ and for all $\tau\in[0,t]$, that is, the bottom $N-j$ processes
stay put during the interval $[0,t]$.
Clearly, $\Prob( \frakD) = ( e^{-2t} )^{N-j}$. On the other hand, from
the Karlin--McGregor formula again, $\Prob(\frakC) = \det[ \hat
p_t(-a,-b) ]_{a,b=0}^{j-1}$ where $\hat p_t(a,b)$ is the transition
probability of $Z(\tau)$ in the presence of the absorbing walls at $-j$
and $k+1$ in time $t$. It is easy to see that
%
\begin{equation}
\label{eq26} \hat p_t(a,b) = \sum_{n \in\Z}
\bigl[ p_t(a+2nm,b) - p_t(-2j-a+2nm,b) \bigr],
\end{equation}
where $m:=j+k+1$.
Now consider the identity $z^{-a} \phi(z)= \sum_{n\in\Z} \phi_{a+n}z^n$. Set $\omega:= e^{\pi i/m}$.
By inserting $z= \omega^r$, $r=0,1,\ldots, 2m-1$ and summing over $r$,
we find that
%
\begin{equation}
\label{eq28} \sum_{r=0}^{2m-1} \bigl(
\omega^r \bigr)^{-a} \phi \bigl(\omega^r \bigr)
= 2m \sum_{n\in\Z} \phi_{a+2mn},\qquad \omega:=
e^{\pi i/m}.
\end{equation}
Hence from~\eqref{eq21},~\eqref{eq26} becomes
%
\begin{equation}
\label{eq29} \hat p_t(a,b) 
=
e^{-2t} ( h_{a-b} - h_{-a-b+2j} ),
\end{equation}
where
%
\begin{equation}
\label{eq30} h_a := \oint_{|z|=1} z^{-a} \,d
\mu_m(z),\qquad  d \mu_m(z) := \frac{1}{2m} \sum
_{r=0}^{2m-1} \phi(z) \delta_{\omega^r}(z).
\end{equation}
Hence, for $N\ge j$,
%
\begin{equation}
\label{eq30-1} \Prob \bigl( \{ K \leq k, J \leq j \} \cap\frakA_N
\cap\frakB_N \bigr) = e^{-2tN} \det[ h_{a-b}-h_{a+b}
]_{a,b=1}^{j}.
\end{equation}

The strong Szeg\"o limit theorem for Toeplitz minus Hankel determinants
(see, e.g., \cite{4}) implies that for the function $\phi(z)$
in~\eqref
{eq20}, $\det[ \phi_{a-b}-\break \phi_{a+b} ]_{a,b=1}^N \to e^{t^2/2}$ as
$N \to\infty$.
Therefore, from~\eqref{eq25} and~\eqref{eq30-1} we find that
%
\begin{equation}
\label{eq32}\qquad  P(j,k) = \lim_{N \to\infty} \frac{ \det[
h_{a-b}-h_{a+b} ]_{a,b=1}^j }{ \det[ \phi_{a-b}- \phi_{a+b}
]_{a,b=1}^N } = e^{-t^2/2}
\det[ h_{a-b} - h_{a+b} ]_{a,b=1}^j .
\end{equation}
This is~\eqref{eq24-1}.
\end{pf}

Hence $K$ and $J$ have the same joint distribution as $\Cro_t$ and
$\Nes_t$.
This nonintersecting process interpretation of $\Cro_t$ and $\Nes_t$
provides some useful information.
As an example, note that the process considered\vadjust{\goodbreak} above has a natural
dual process; see Figure~\ref{figpaths}.
In the dual process the roles of $K$ and $J$ are reversed: the depth is
$K$ and height is $J$ in the dual process.
It follows that $K$ and $J$, and hence $\Cro_t$ and $\Nes_t$, are
symmetrically distributed.

%
\begin{figure}

\includegraphics{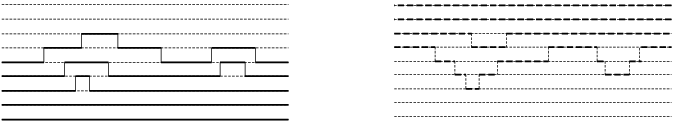}

\caption{A nonintersecting continuous-time simple random walks (left)
and its dual walk (right).}
\label{figpaths}
\end{figure}

In various nonintersecting processes, including the above model, the
top curve is shown to converge, after appropriate scaling, to the Airy
process in the long-time, many-walker limit; see, for example, \cite
{Johansson02,Johansson05}.
Then it is natural to think that the leading fluctuation term of $K$ is
given by the maximum of the Airy process.
It is a well-known fact that the maximum of the Airy process is
distributed as the GOE Tracy--Widom distribution.
This was first proved indirectly in \cite{Johansson03}. A direct proof
was only recently obtained in \cite{CorwinQR}.
(See also \cite{MFQuastelR} for the distribution of the location of
the maxima.)
Hence the leading term $F(x)$ in~\eqref{eq12} is as expected.
Moreover, when $t$ becomes large, it is plausible to expect that the
fluctuation of the top curve of the original process (whose max is $K$)
and the fluctuation of
the bottom curve of the dual process (whose min is $-J$) become
independent at least to the leading order. The leading term of
Theorem~\ref{thm3} is natural from this.
Theorem~\ref{thm3} evaluates the second term of the asymptotic
expansion of their joint distribution.

For a family of finitely many nonintersecting walks,
it is interesting to consider the maximum of the top curve and the
minimum of the bottom curve.
It is curious to check if the joint distribution of them would have the
same expansion as in Theorem~\ref{thm3}.
This will be considered elsewhere.
Finally, we mention that the asymptotics of the distribution of the
width of nonintersecting processes was studied recently in \cite{BaikLiu}.

\section{\texorpdfstring{Proof of Proposition~\protect\ref{prop1}}{Proof of Proposition 1.1}}\label{secproofprop1}

In this section, we give a proof of Proposition~\ref{prop1}.
We also obtain similar formulas for the marginal distributions of $\Cro_t$ and $\Nes_t$, and for the distribution of $L_t$. They are stated at
the end of this section.

Let $d\rho$ be a (either continuous or discrete) measure on the unit
circle 
and define a new measure $d\rho(z;t)$ which depends on a parameter $t$ as
%
\begin{equation}
\label{eq33-1} d\rho(z;t) := e^{t(z+z^{-1})} \,d \rho(z).
\end{equation}
Measure~\eqref{eqD-1}, associated to the joint distribution of $\Cro_t, \Nes_t$, is certainly of this form, but the following algebraic
steps apply to general $d\rho$.\vadjust{\goodbreak}

Let
%
\begin{equation}
\label{eq33} h_\ell(t) := \oint_{|z| = 1} z^{-\ell} \,d
\rho(z;t).
\end{equation}
We are interested in finding a simple formula for the second derivative
of the Toeplitz determinant $T_n(t)$ and the Toeplitz--Hankel
determinant $H_n(t)$ [see~\eqref{eq15}] associated to the measure
$d\rho(z;t)$,
%
\begin{equation}
\label{eq33-2} \qquad T_n(t):= \det \bigl[ h_{a-b}(t)
\bigr]_{a,b=1}^n,\qquad  H_n(t)=\det \bigl[
h_{a-b}(t)-h_{a+b} (t) \bigr]_{a,b=1}^n.
\end{equation}
We assume that when $d\rho$ is a discrete measure, $n$ is smaller than
the number of points in the support of $d\rho$.

Let $\pi_n(z;t)=z^n+ \cdots,$ $n=0, 1, 2,\ldots,$ be the monic
orthogonal polynomials defined by the conditions
%
\begin{equation}
\label{OPconditions} \bigl\langle\pi_n ,z^\ell\bigr
\rangle := \oint_{|z|=1} \pi_n(z;t) \overline{z^{\ell}}
\,d \rho(z;t) = 0,\qquad  0 \leq\ell< n.
\end{equation}
Set
%
\begin{equation}
\label{Nndef} N_n(t) := \langle \pi_n, \pi_n\rangle  =
\bigl\langle \pi_n, z^n\bigr\rangle. 
\end{equation}
Then it is well known that (see, e.g., Sections 2 and 3 of \cite{2} for
the second identity)
%
\begin{equation}
\label{GkjOPproduct} T_j(t)= \prod_{n=0}^{j-1}
N_n(t),\qquad H_j(t) = \prod_{n=1}^{j}
N_{2n}(t) \bigl(1- \pi_{2n}(0;t) \bigr)^{-1}.
\end{equation}

Define (see \cite{Szego}) 
%
\begin{equation}
\label{reverseOP} \pi^*_n(z;t) := z^n\overline{
\pi_n \bigl(z^{-1};t \bigr)} = 1 + \overline{a_{n-1}}
z + \cdots+ \overline{a_1} z^{n-1} + \overline{
\pi_n(0;t)} z^n.
\end{equation}
This polynomial satisfies the orthogonality properties
%
\begin{equation}
\bigl\anglel{\pi_n^*}, {z^k} \bigr\angler =
N_n \delta_{k,0},\qquad  k=0,1,\ldots,n.
\end{equation}
Recall the Szeg\"o recurrence relations \cite{Szego},
%
\begin{eqnarray}
\label{Szegorecurrence} %
\pi_{n+1}(z) &=& z
\pi_n(z) + \pi_{n+1}(0) \pi^*_n(z),
\nonumber
\\[-8pt]
\\[-8pt]
\nonumber
z \pi_n(z) &=& \frac{N_n}{N_{n+1}} \bigl( \pi_{n+1}(z) -
\pi_{n+1}(0) \pi_{n+1}^*(z) \bigr). %
\end{eqnarray}
The second relation, when we compare the coefficients of $z^{n+1}$,
gives rise to the relation
%
\begin{eqnarray}
\label{eqd2} \frac{N_{n+1}}{N_n} = 1- \bigl\vert\pi_{n+1}(0)
\bigr\vert^2.
\end{eqnarray}

We now derive differential equations for $\pi_n(0;t)$ and $N_n(t)$.
All the differentiations are with respect to $t$, and we use the
notation $f'$ for $\frac{d}{dt} f$.
By differentiating the formula $\anglel{\pi_n},  {z^k} \angler =
0, k=0,\ldots,n-1$,
we obtain, by noting $\frac{\mathrmm{d}^{ } }{\mathrmm{d} t^{ } }
e^{t(z+z^{-1})} =
(z+z^{-1})e^{t(z+z^{-1})}$, that
$\anglel{\pi_n'},  {z^k} \angler +\anglel{\pi_n},
{z^{k+1}+z^{k-1}} \angler=0$. Then by
using the orthogonality conditions, we find that
%
\begin{eqnarray}
\label{piinnerproducts} %
\bigl\anglel{\pi_n'},
{z^k} \bigr\angler &=& 0,\qquad k=1,\ldots,n-2,
\nonumber\\
\bigl\anglel{\pi_n'}, {1} \bigr\angler &=& -\bigl
\anglel{\pi_n}, {z^{-1}} \bigr\angler = -\anglel{z
\pi_n}, {1} \angler = \pi_{n+1}(0) N_n,
\\
\bigl\anglel{\pi_n'}, {z^{n-1}} \bigr\angler
&=& -\bigl\anglel{\pi_n}, {z^n} \bigr\angler =
-N_n,\nonumber  %
\end{eqnarray}
where the last equality in the second condition above follows from the
first recurrence in \eqref{Szegorecurrence}.
From these relations, we conclude that, for $n \ge1$,
%
\begin{equation}
\label{piz} %
\pi_n'(z;t)=
\frac{N_n(t)}{N_{n-1}(t)} \bigl(\pi_{n+1}(0;t) \pi_{n-1}^*(z;t)-
\pi_{n-1}(z;t) \bigr). %
\end{equation}
This can be checked by taking the difference and noting that the
difference is a polynomial of degree at most $n-1$ and is orthogonal to
$z^k$, $k=0,1, \ldots, n-1$.
Evaluating~\eqref{piz} at $z=0$, we obtain, using~\eqref{eqd2},
for $n\ge1$,
%
\begin{equation}
\label{pi0} %
\pi_n'(0;t) 
=
\bigl(\pi_{n+1}(0;t)-\pi_{n-1}(0;t) \bigr) \bigl(1-\bigl|
\pi_n(0;t)\bigr|^2 \bigr). %
\end{equation}
%
This equation is related to the Ablowitz--Ladik equations and the Schur
flows; see, for example, \cite{Nenciu,Golinskii}.

We also differentiate $N_n(t)= \anglel{\pi_n},  {\pi_n } \angler$
and obtain
%
\begin{eqnarray}
\label{eqd10} %
N_n' 
&= 2 \bigl\anglel{\pi_n'}, {
\pi_n} \bigr\angler+2 \anglel{z \pi_n}, {
\pi_n} \angler = \langle 2z\pi_n, \pi_n
\rangle. %
\end{eqnarray}
%
Using the first recurrence of~\eqref{Szegorecurrence},
%
\begin{eqnarray}
\anglel{z\pi_n}, {\pi_n} \angler= \anglel{
\pi_{n+1}}, {\pi_n} \angler - \pi_{n+1}(0) \bigl
\anglel{\pi_n}, {\pi_n^*} \bigr\angler = -
\pi_{n+1}(0)\pi_n(0) \bigl\anglel{\pi_n},
{z^n} \bigr\angler.
\end{eqnarray}
Hence, we obtain, for $n\ge0$,
%
\begin{equation}
\label{N} %
N_n'(t) 
= - 2\pi_{n+1}(0;t)\pi_n(0;t)N_n(t).
\end{equation}

We now evaluate the logarithmic derivatives of $T_j$ and $H_j$.
From~\eqref{GkjOPproduct} and~\eqref{N}, we find that
%
\begin{equation}
\label{eqD4Q} %
\bigl( \log T_{j}(t) \bigr)'
= \sum_{n=0}^{j-1} \frac{N_n(t)}{N_n(t)} = -2
\sum_{n=0}^{j-1} \pi_{n}(0;t)
\pi_{n+1}(0;t) . %
\end{equation}
We take one more derivative.
By using~\eqref{pi0}, for $n\ge1$,
%
\begin{equation}
\label{eqD4} %
 \bigl(\pi_{n}(0) \pi_{n+1}(0)
\bigr)' = P_{n+1}-P_n,
%
%
\end{equation}
where $P_n:= |\pi_{n}(0)|^2 + \pi_{n-1}(0)\pi_{n+1}(0) (1-|\pi_{n}(0)|^2 )$.
For $n=0$, $(\pi_{0}(0) \pi_{1}(0))' = \pi_1'(0) = (\pi_2(0)-1)(1-|\pi_1(0)|^2)=P_1-1$.
Hence from a telescoping sum, we obtain
%
\begin{eqnarray}
\label{eqD50} &&\tfrac12 \bigl( \log \bigl(e^{-t^2}T_{j}(t)
\bigr) \bigr)'' 
\nonumber
\\[-8pt]
\\[-8pt]
\nonumber
&&\qquad= - \bigl(
\pi_{j-1}(0)\pi_{j+1}(0) +\bigl|\pi_{j}(0)\bigl|^2
\bigr) + \pi_{j-1}(0)\pi_{j+1}(0)\bigl|\pi_{j}(0)\bigr|^2
.
\end{eqnarray}

We now consider $H_j(t)$ in~\eqref{GkjOPproduct}.
By taking the log derivative and using \eqref{pi0}, \eqref{N} and
$\pi_0(z)=1$,
%
\begin{equation}
\qquad
\bigl( \log H_j(t) \bigr)' = \sum
_{n=1}^j \biggl[ \frac{N_{2n}'}{N_{2n}} +
\frac{\pi_{2n}'(0)}{1-\pi_{2n}(0)} \biggr] 
= \pi_{2j+1}(0) - \sum_{n=0}^{2j}
\pi_{n}(0) \pi_{n+1}(0). %
\end{equation}
From~\eqref{eqD4Q}, we find that
%
\begin{equation}
\label{eqHj} \bigl( \log H_j(t) \bigr)' =
\pi_{2j+1}(0) + \tfrac{1}{2} \bigl( \log T_{2j+1}(t)
\bigr)' . 
\end{equation}

Proposition~\ref{prop1} is proven from~\eqref{eq8},~\eqref
{eq15},~\eqref{eqD50} and~\eqref{eqHj}
by noting that $\pi_n(0;0)=0$ for all $n\ge1$, and $T_{j}(0) = 1$ and
$H_j(0)=1$ for all $j\ge1$.

The marginal distribution of $\Nes_t$ is obtained from~\eqref{eq8} by
taking the limit \mbox{$k\to\infty$}. Then by taking $m\to\infty$
in~\eqref{eq15},
we find that $\Prob\{\Nes_t\le j\}= e^{-t^2/2} G_{\infty, j}$ where
$G_{\infty, j}(t)$ is same as~\eqref{eq15} where the measure $\mu_m$
in~\eqref{eqD-1} is replaced by 
%
\begin{equation}
\label{eqC-1} d \mu_\infty(z) :=e^{t(z+z^{-1})} \frac{d z}{2\pi i z}.
\end{equation}
Then the above computation applies that 
%
\begin{equation}
\label{eqA56} %
\log\Prob\{ \Nes_t \leq j \} = \int
_0^t \pi_{2j+1, \infty}(0; \tau) \,d \tau+
\int_0^t \int_0^s
\QQ_j^\infty(\tau) \,d\tau \,ds, 
%
\end{equation}
where $\pi_{n,\infty}(z;t))$ is the monic orthogonal polynomial of
degree $n$ with respect to the measure~\eqref{eqC-1},
and $\QQ_j^\infty(\tau)$ is same as~\eqref{OPconditions} with $\pi_{n,m}(z;\tau)$ replaced by $\pi_{n,\infty}(z;\tau)$.
Due to the symmetry, $\Prob\{\Cro_t\le j\}= \Prob\{\Nes_t\le j\}$.

Finally, it is well known \cite{Gessel,Rains} that for the length
$L_t$ of the Poissonized random permutation defined in Section~\ref{secellt},
$\Prob\{L_t\le\ell\} = e^{-t^2} T_\ell(t)$, where $T_j(t)$ is the
determinant of the $\ell\times\ell$ Toeplitz matrix~\eqref{eq33-2}
with respect to measure~\eqref{eqC-1}. Hence we have
%
\begin{equation}
\label{eqellt-1} %
\log\Prob\{ L_t \leq\ell\} = 2 \int
_0^t \int_0^s
\QQ_{{(\ell-1)}/{2}}^\infty(\tau) \,d\tau \,ds. %
\end{equation}


\section{Orthogonal polynomial Riemann--Hilbert problems}\label{secRHP-start}

We prove Theorems \ref{thm3} and \ref{thm4} by deriving asymptotic
expansions of
$\pi_{n,m}(0; \tau)$ and $\pi_{n,\infty}(0;\tau)$, $n=2j, 2j+1, 2j+2$,
$\tau\in(0,t)$, in the joint limit $t,j,m \to\infty$ such that given
any fixed $x, x' \in\R$,
%
\begin{eqnarray}
\label{jkmcriticalscalings} j = t + \frac{x}{2} t^{1/3} ,\qquad k = t
+ \frac{x'}{2} t^{1/3},\qquad  m= j + k +1 .
\end{eqnarray}
The jumping off point for our analysis is the fact that $\pi_{n,m}(z;t)$ and $\pi_{n,\infty}(z;t)$ can be recovered from the
solution of the following discrete and continuous measure
Riemann--Hilbert problems, respectively.

%
\begin{rhp}[\textsc{for discrete OPs}]\label{rhpdiscrete}
Find a $2\times2$ matrix $\bY(z;t,n,m)$ with the following properties:
\begin{longlist}[(1)]
\item[(1)] $\bY(z;t,n,m)$ is an analytic function of $z$ for $z \in
\C
\setminus\{ \omega_r \}_{r=0}^{2m-1}$ where $\omega_r := \omega^r$
and $\omega:=e^{i\pi/m}$.
\item[(2)] $\bY(z;t,n,m) = [ I + \mathcal{O}  ( 1/z  ) ]
z^{n \sigma_3}$ as $z
\to
\infty$.
\item[(3)] At each $\omega_r$, $\bY(z;t,n,m)$ has a simple pole
satisfying the residue relation
%
\begin{eqnarray}
\label{Yresidues} \res_{z = \omega_r} \bY(z;t,n,m) = \lim_{z \to
\omega_r}
\bY(z;t,n) \triu[0]{\displaystyle -\frac{z}{2m} z^{-n}e^{t(z+z^{-1})} }.
\end{eqnarray}
\end{longlist}
\end{rhp}
%
As is well known (see, e.g., \cite{FIK}, \cite{BKMM}),
and may be verified directly, the solution $\bfY(z;t;n,m)$ is given by
%
\begin{eqnarray}
\label{RHPsolution} \bY(z;t;n,m) = \tbyt{ \pi_{n,m}(z;t)} {*} {-
\pi^*_{n-1,m}(z;t)/N_{n-1,m}} {*},
\end{eqnarray}
where we recall that $\pi^*_{n,m}$ is the reverse polynomial defined by
\eqref{reverseOP} and
\begin{eqnarray*}
\bY_{12}(z;t,n,m) &=& -\frac{1}{2m} \sum
_{r=0}^{2m-1} \frac{ \pi_{n,m}(\omega_r;t) \omega_r^{-n+1}
e^{t(\omega
^r + \omega^{-r})} }{z- \omega_r},
\\
\bY_{22}(z;t,n,m) &=& \frac{1}{2m} \sum
_{r=0}^{2m-1} \frac{ N_{n-1,m}^{-1} \pi_{n-1,m}^*(\omega_r; t)
\omega_r^{-n+1} e^{t(\omega^r + \omega^{-r})} }{z- \omega_r}.
\end{eqnarray*}
Hence, using the OP properties listed in \eqref{OPconditions}--\eqref
{Szegorecurrence} we can easily check that
%
\begin{equation}
\label{pinrecovery} \bY(0;t,n,m) = \tbyt{ \pi_{n,m}(0) } {
N_{n,m} } { -1/N_{n-1,m} } {\pi_{n,m}(0)}.
\end{equation}
Note that the generic $(2,2)$-entry would be $\overline{ \pi_{n,
m}(0)}$ but as our weight $e^{t(z+z^{-1})}$ is real $\overline{ \pi_{n
,m}(0)} = \pi_{n ,m}(0)$.

The continuous RHP can be thought of as a limit of the discrete case
when $m$, the number of points in the support of the measure, goes to infinity.
%
\begin{rhp}[\textsc{for continuous OPs}]\label{rhpcontinuous}
Find a $2 \times2$ matrix $\bY^\infty(z;t,n)$ with the following properties:
\begin{longlist}[(1)]
\item[(1)] $\bY^\infty(z;t,n)$ is an analytic function of $z$ for $z
\in\C\setminus\Sigma$, $\Sigma:=\{z \dvtx |z| = 1\}$ oriented
counterclockwise.
\item[(2)] $\bY^\infty(z;t,n) = [ I +\mathcal{O}  ( 1/z
) ] z^{n\sig}$ as $z
\to
\infty$.
\item[(3)] $\bY^\infty$ takes continuous boundary values $\bY_+^\infty$
and $\bY_-^\infty$ as $z \to\Sigma$ from the left/right, respectively,
satisfying the relation
%
\begin{equation}
\label{continuousjump} \bY^\infty_+(z;t,n) = \bY^\infty_-(z;t,n)
\triu{z^{-n} e^{t(z+z^{-1})} },\qquad z\in\Sigma.
\end{equation}
\end{longlist}
\end{rhp}
%

The solution $\bY^\infty$ is related to the orthogonal polynomials
$\pi_{n,\infty}$ with respect to the measure $\mu_\infty$~\eqref{eqC-1},
and we have
%
\begin{equation}
\label{continuouspinrecovery} \bY^\infty(0;t,n,m) = \tbyt{
\pi_{n,\infty}(0) } { N_{n, \infty} } { -1/N_{n-1,\infty} } {
\pi_{n,\infty}(0)}.
\end{equation}

Precisely, this continuous Riemann--Hilbert problem was analyzed
asymptotically in \cite{BDJ,2,BKMM}. The steepest-descent analysis
for discrete Riemann--Hilbert problem was studied for general discrete
measure on the real line in \cite{BKMM}. Both works expand upon the
continuous weight case studied in \cite{DKMVZa,DKMVZb}.
In the course of proving Theorems~\ref{thm3} and~\ref{thm4} we
improve these results as follows: we expand the analysis of \cite{BKMM}
to the case when a gap and saturated region of the equilibrium measure
(see the discussion below) are about to open up, and we compute
explicit formulas for the first three terms in the expansion of the
solution in both the discrete and continuous cases extending the
results of \cite{BDJ,2,BKMM} where only leading terms were
calculated.\looseness=1





One of the key steps in the steepest-descent analysis of
Riemann--Hilbert problems 
is the introduction of the so-called $g$-function.
For the Riemann--Hilbert problem~\ref{rhpdiscrete} for discrete
orthogonal polynomials, the $g$-function is
given by $g(z)=\int_{|s|=1} \log(z-s) \,d\mu(s)$ where $d\mu(s)$ is the
so-called equilibrium measure
satisfying $0\le d\mu(s) \le\frac{2m}{n} \frac{ds}{2\pi is}$; see, for
example, \cite{BKMM}. The upper-constraint $d\mu(s) \le\frac{2m}{n}
\frac{ds}{2\pi is}$
is due to the fact that the weight is discrete.
The support of $d\mu$ consists of three types of intervals, voids
(where $d\mu=0$), bands [where $0< d\mu(s) < \frac{2m}{n} \frac
{ds}{2\pi is}$] and saturations [where $ d\mu(s) = \frac{2m}{n} \frac
{ds}{2\pi is}$].

For the continuous Riemann--Hilbert problem, the upper-constraint for
the equilibrium is not present, and there are no saturations.
For the Riemann--Hilbert Problem~\ref{rhpcontinuous},
it was shown in \cite{BDJ} that with $\gamma= \frac{n}{2t}$,\footnote
{This is actually the inverse of the parameter appearing in \cite{BDJ}
which we find more convenient to work with presently.}
the support of the equilibrium measure consists of the entire unit circle when
$\gamma> 1$, and consists of single void and band intervals, with the void set
centered about $z =-1$, when $\gamma< 1$.

In the discrete Problem~\ref{rhpdiscrete} the solution $\bY$ now
depends on the three parameters $(t,n,m)$ and as we shall see in
Section \ref{secRHPanalysis}, the equilibrium measure's support
depends critically on the two parameters
%
\begin{equation}
\label{gamma} \gamma= \frac{n}{2t} \quad\mbox{and} \quad\tg= \frac{2m-n}{2t}.
\end{equation}
As each of these parameters passes through the critical value $\gamma_{\mathrm{crit}} = 1$ a
transition occurs in the support of the equilibrium measure.\eject

It turns out that to prove Theorems~\ref{thm3}--\ref{thm5}, we only
need to evaluate $\bY(0; t,n,m)$ in two regimes:
the ``exponentially small regime''
%
\begin{equation}
\label{eqregime1} n\ge2t(1+\delta),\qquad 2m-n\ge2t(1+\delta)
\end{equation}
for a fixed $\delta>0$, and the ``Painlev\'e regime''
%
\begin{equation}
\label{eqregime2} 2t-Lt^{1/3}\le n\le2t(1+\delta),\qquad 2t-L
t^{1/3}\le2m-n\le2t(1+\delta)
\end{equation}
for fixed $L>0$ and $\delta>0$. In the ``exponential'' case $\gamma,
\tg\geq1+\delta$ and the equilibrium measure is supported on the
whole of $\Sigma$, while in the ``Painlev\'e'' case $\gamma, \tg\in[1
- \frac{L}{2} t^{-2/3}, 1+\delta]$ and the equilibrium measure is in
the transitional region where a void and saturation region are
beginning to open at $z=-1$ and $z=1$, respectively.
As such we never need to consider cases in which either a void or
saturation have fully opened,
and we restrict our attention to the full band (and the transitional)
case only, focusing on obtaining the three lower-order terms of the
asymptotic expansion explicitly. In this case the $g$-function is
explicit, and the transformations of the Riemann--Hilbert problem will
be all stated explicitly without mentioning the $g$-function in the
subsequent sections.

There are many interesting related problems in which one needs an
asymptotic description of the $\pi_{n,m}$ for a whole range of degrees
$n$; one such example which we plan to study in the future is the
Ablowtiz--Ladik equations. There we will fully describe the structure
of the equilibrium measure in the full range of parameter space.


The analyses of the discrete and continuous Riemann--Hilbert problems
have strong similarities, and we analyze them simultaneously. The
important fact, which we clarify in Sections \ref{seccontinuousweight}--\ref{secR0},
is that in the discrete Riemann--Hilbert
problem we can partition the solution into terms that come from (two)
continuous Riemann--Hilbert problems which correspond to the marginal
distributions and the remaining ``joint'' terms which contribute only
to the joint distribution.


\section{The exponentially small regime}\label{secRHPanalysis}

The first steps of the steepest-descent analysis are the same for both
the exponentially small regime and the Painlev\'e regime. We begin by
first considering parameters $(n,m,t)$ in the ``exponentially small
regime'' \eqref{eqregime1},
\[
n\ge2t(1+\delta), \qquad 2m-n\ge2t(1+\delta)
\]
for fixed $\delta>0$. We assume that $\delta< 1/2$; see the discussion
before~\eqref{CFUcoordinates}.

We begin our analysis of RHP~\ref{rhpdiscrete} by first introducing a
transformation $\bY\mapsto\bQ$ such that the new unknown $\bQ$ has
no poles.
Let $\Sigma$ denote the unit circle and let $\Sigma_{\mathrm{in}}$ and $\Sigma_{\mathrm{out}}$ denote positively oriented simple closed contours enclosing the
origin such that $\Sigma_{\mathrm{in}} \subset\{z \dvtx |z| < 1\}$ and $\Sigma_{\mathrm{out}} \subset\{ z \dvtx |z|>1 \}$; let $\Omega_+$ and $\Omega_-$ denote
the nonempty open sets enclosed between $\Sigma$ and $\Sigma_{\mathrm{in}}$ and
$\Sigma$ and $\Sigma_{\mathrm{out}}$, respectively; see Figure \ref{figQcontours}.
Define
%
\begin{eqnarray}
\label{Qdef} \bQ(z) := \cases{ \bY(z) \triu{\displaystyle \frac
{z^{2m}}{z^{2m} - 1} z^{-n}
e^{t(z+z^{-1})} }, &\quad $z \in\Omega_+$,
\cr
 \bY(z) \triu{\displaystyle \frac{1}{z^{2m} - 1}
z^{-n} e^{t(z+z^{-1}) } } , & \quad $z \in\Omega_- .$}
\end{eqnarray}
%
%
\begin{figure}

\includegraphics{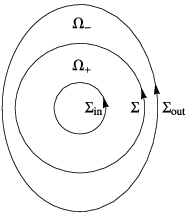}

\caption{The contours and regions used to define the map $\bY\mapsto
\bQ$. The contours $\Sigma_{\mathrm{in}}$ and
$\Sigma_{\mathrm{out}}$ can be deformed as necessary provided they do not intersect
$\Sigma$.}
\label{figQcontours}
\end{figure}

The triangular factors introduced in the above definition have poles at
each $\omega_r$, and the residues are such that the new unknown $\bQ
(z)$ has no poles, but is now piecewise holomorphic.
Note that the residue of each triangular factor at each $z=\omega_r$ is
the same since $z^{2m}=1$ at $z=\omega_r$.
Two different extensions of $\bQ$ as above were introduced in \cite
{KMM}; see also \cite{BKMM}.
By explicit computation $\bQ(z)$ satisfies
%
\begin{rhp}[\textsc{for Q(z)}]\label{rhpQ}
Find a $2 \times2$ matrix $\bQ(z)$ such that:
\begin{longlist}[(1)]
\item[(1)] $\bQ(z)$ is analytic in $\C\setminus(\Sigma\cup\Sigma_{\mathrm{in}} \cup\Sigma_{\mathrm{out}})$.
\item[(2)] $\bQ(z) = [ I + \mathcal{O}  ( 1/z  ) ] z^{n
\sigma_3}$ as $z \to
\infty$.
\item[(3)] Along each jump contour $\bQ_+(z) = \bQ_-(z) V_Q(z)$ where
%
\begin{eqnarray}
\label{Qjumps} V_Q(z) = \cases{\triu{ \displaystyle z^{-n}
e^{t(z+z^{-1})} },  & \quad $z \in\Sigma$,\vspace*{3pt}
\cr
\triu{\displaystyle \frac{-z^{2m}}{z^{2m}-1} z^{-n}
e^{t(z+z^{-1})} } , & \quad $z \in\Sigma_{\mathrm{in}}$,\vspace*{3pt}
\cr
\triu{\displaystyle \frac{1}{z^{2m}-1}
z^{-n} e^{t(z+z^{-1})} }, & \quad $z \in \Sigma_{\mathrm{out}}.$}
\end{eqnarray}
\end{longlist}
\end{rhp}

Once we transforms a RHP with poles to a ``continuous'' RHP as $\bQ$,
the next step is to introduce a ``$g$-function.''
However, for the above RHP, when the parameters are in the
regimes~\eqref{eqregime1} and~\eqref{eqregime2}, it turns out that
the $g$-function is simple and explicit. We proceed by explicitly defining
%
\begin{eqnarray}
\label{Sdef} \bS(z) := \cases{ \bQ(z) \diag{e^{t z}} {e^{-tz}
} \offdiag{-1} {1}, &\quad  $|z| < 1$,\vspace*{3pt}
\cr
\bQ(z) \diag{z^{-n} e^{tz^{-1}}
} { z^n e^{-t z^{-1}} }, & \quad $|z| > 1$.}
\end{eqnarray}
%
Clearly $\bY(0) = \bS(0) ({ 0 \enskip1 \atop-1 \enskip0 })$ and
$\bS(z)=I+ \mathcal{O}  ( z^{-1}  )$ for large $z$.
Calculating the new jump matrices, we arrive at the following problem
for $\bS(z)$.
%
\begin{rhp}[\textsc{for $S(z)$}]
Find a $2 \times2$ matrix-valued function $\bS(z)$ such that:
\begin{longlist}[(1)]
\item[(1)] $\bS(z)$ is analytic for $z\in\C\setminus(\Sigma\cup
\Sigma_{\mathrm{in}} \cup\Sigma_{\mathrm{out}})$.
\item[(2)] $\bS(z) = I + \mathcal{O}  ( 1/z  )$ as $z \to
\infty$.
\item[(3)] The boundary values of $\bS(z)$ satisfy the jump relation
$\bS_+(z) =\break \bS_-(z) V_S(z)$ where
%
\begin{eqnarray}
\label{Sjump} V_S(z) = \cases{ \displaystyle\tril{(-1)^n
e^{-2t\theta} } \triu{\displaystyle -(-1)^n e^{2t \theta} }, &\quad  $z \in
\Sigma$,\vspace*{3pt}
\cr
\tril{ \displaystyle\frac{-1}{1-z^{2m}} e^{-2t\phi} }, &\quad  $z \in
\Sigma_{\mathrm{in}}$,\vspace*{3pt}
\cr
\triu{\displaystyle\frac{1}{1-z^{-2m}} e^{2t\phi} }, & \quad $z \in
\Sigma_{\mathrm{out}}$,}
\end{eqnarray}
where
%
\begin{eqnarray}
\label{allbandphases} %
\theta(z; \gamma) &:=& \frac{1}{2}
\bigl(z-z^{-1} \bigr) + \gamma\log(-z), \qquad\gamma:= \frac{n}{2t},
\nonumber
\\[-8pt]
\\[-8pt]
\nonumber
\phi(z; \tilde\gamma)& :=& \frac{1}{2} \bigl(z-z^{-1} \bigr) -
\tilde\gamma\log z,\qquad  \tilde\gamma:=\frac{2m-n}{2t}. %
\end{eqnarray}
Here the log is defined on the principal branch. 
\end{longlist}
\end{rhp}

Now we assume that the parameters are in regime~\eqref{eqregime1}.
Note that for any $e^{i\alpha} \in\Sigma$, $\theta(e^{i\alpha})
\in i\R$.
Also note that writing $z = r e^{i \alpha}$, we have $\frac{\mathrmm{d}^{ } }{\mathrmm{d} r^{ } } [ \re
\theta(r e^{i \alpha};\break\gamma) ]_{r=1}= \cos\alpha+ \gamma\ge
-1+\gamma\ge\delta>0$ and $\frac{d^2}{dr^2} [ \re\theta(r e^{i
\alpha
};\gamma) ]_{r=1}= -r^3\cos\alpha-\gamma r^{-2}
\le r^{-3}-\gamma r^{-2}<0$ if $r>\gamma^{-1}$.
Hence
$\re\theta(r e^{i \alpha};\gamma)\le(-1+\gamma)(r-1)$ for $r\in
(\gamma^{-1},1)$ and for all $\alpha\in(-\pi, \pi]$.
Therefore, for a given $\delta>0$, there exist $0<r_1<r_2<1$ and $c>0$
such that
$\re[ \frac1{\gamma}\theta(r e^{i \alpha};\gamma) ] \le-c$ for all
$r\in[r_1, r_2]$, $\alpha\in(-\pi, \pi]$ and for
the parameters $(n,m,t)$ in the regime~\eqref{eqregime1}.
Note that this implies that
%
\begin{equation}
\label{eqcase1-1} \bigl|e^{2t\theta(z; \gamma)}\bigr| = e^{n \re[ (1/{\gamma)
}\theta(r e^{i \alpha};\gamma) ] } \le e^{-cn},\qquad
r_1\le|z|\le r_2
\end{equation}
for parameters $(n,m,t)$ in the regime~\eqref{eqregime1}.

Similarly, $\re[ \frac1{\tilde\gamma}\phi(\frac1{r} e^{i \alpha
};\tilde\gamma) ] \le-c$ for all $r\in[r_1, r_2]$, $\alpha\in
(-\pi,
\pi]$ and for
the parameters $(n,m,t)$ in the regime~\eqref{eqregime1}.
This can be easily seen by noting that $\phi(z;\gamma)= \theta(-z^{-1};
\gamma)$. Hence
%
\begin{eqnarray}
\label{eqcase1-2} \bigl|e^{2t\phi(z; \tilde\gamma)}\bigr| = e^{(2m-n) \re[
(1/{\gamma})\theta(r e^{i \alpha};\gamma) ] } \le e^{-c(2m-n)},\qquad
\frac1{r_1}\le|z|\le\frac1{r_2}
\end{eqnarray}
for parameters $(n,m,t)$ in the regime~\eqref{eqregime1}.

%
\begin{figure}

\includegraphics{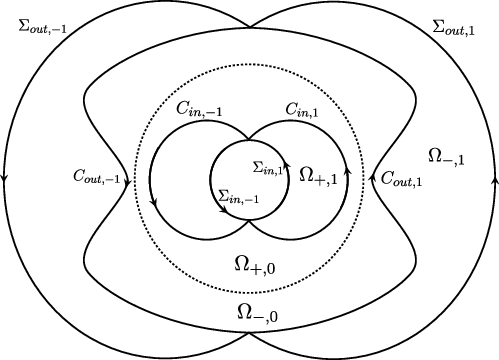}

\caption{Lens contours and regions in the definition of $\bT
(z)$.}\label
{figTcontours}
\end{figure}


Let $C_{\mathrm{in},-1}, C_{\mathrm{in},1}, C_{\mathrm{out},1}$ and $C_{\mathrm{out}, -1}$ be the
contours as depicted in Figure~\ref{figTcontours} such that
$C_{\mathrm{in},-1}$ and $C_{\mathrm{in},1}$ lie in the annulus $r_1<|z|<r_2$ and
$C_{\mathrm{out},1}$ and $C_{\mathrm{out}, -1}$ lie in the annulus $\frac
1{r_1}<|z|<\frac1{r_2}$.
Make now the following change of variables which moves the oscillations
on $\Sigma$ into regions of exponential decay.
%
\begin{eqnarray}
\label{Tdef} \bT(z) = \cases{ \displaystyle\bS(z) \triu{(-1)^n e^{2t
\theta}},
&\quad $z \in\Omega_{+,0}$,\vspace*{3pt}
\cr
\bS(z) \triu{(-1)^n
e^{2t \theta}} \tril{ - e^{-2t \phi} }, &\quad $ z \in\Omega_{+,1}$,\vspace*{3pt}
\cr
\bS(z) \tril{(-1)^n e^{-2t\theta} }, & \quad $z \in
\Omega_{-,0}$,\vspace*{3pt}
\cr
\bS(z) \tril{ (-1)^n e^{-2t\theta} }
\triu{-e^{2t \phi} }, &\quad  $z \in\Omega_{-,1}$,
\cr
\bS(z), &\quad  $\mbox{elsewhere.}$}
\end{eqnarray}
Note that $\bY(0)=\bT(0)({ 0 \enskip1 \atop-1 \enskip0 })$.
Explicitly calculating the new jumps, the new unknown $\bT(z)$
satisfies the following problem:
%
\begin{rhp}[\textsc{for T(z)}]
Find a $2 \times2$ matrix-valued function $\bT(z)$ satisfying the
following properties:
\begin{enumerate}[(1)]
\item[(1)] $\bT(z)$ is analytic in $\C\setminus(\Sigma_{\mathrm{in}} \cup
\Sigma_{\mathrm{out}} \cup C_{\mathrm{in},\pm1} \cup C_{\mathrm{out},\pm1})$.
\item[(2)] $\bT(z) = I + \mathcal{O}  ( 1/z  )$ as $z \to
\infty$.
\item[(3)] The boundary values of $\bT(z)$ satisfy the jump relation
$\bT_+(z) =\break \bT_-(z) V_T(z)$ where
%
\begin{eqnarray}
\label{Tjumps} %
V_T(z)
& =&\cases{\displaystyle \triu{-(-1)^n e^{2t\theta} } ,& \quad $z \in
C_{\mathrm{in},-1}$,\vspace*{3pt}
\cr
\displaystyle\tril{ (-1)^n e^{-2t\theta} }, &\quad $ z \in
C_{\mathrm{out},-1} $,\vspace*{3pt}
\cr
\tril{\displaystyle-e^{-2t\phi} }, &\quad $z \in C_{\mathrm{in},1}$,\vspace*{3pt}
\cr
\triu{\displaystyle e^{2t \phi} }, & \quad $z \in
C_{\mathrm{out},1}$,\vspace*{3pt}\cr
\tril{\displaystyle
\frac{-e^{-2t \phi} }{1-z^{2m}}}, & \quad$z
\in\Sigma_{\mathrm{in},-1}$,\vspace*{3pt}
\cr
\triu{\displaystyle\frac{- (-1)^n e^{-2t\theta} }{ 1-z^{2m} }}
\bigl(1-z^{2m} \bigr)^{-\sigma_3}, &\quad  $z \in\Sigma_{\mathrm{in},1}
$,\vspace*{3pt}
\cr
\displaystyle\bigl(1- z^{-2m} \bigr)^{-\sigma_3} \tril{\displaystyle \frac{(-1)^n e^{-2t
\theta}
}{1-z^{-2m} }} ,&\quad $
z \in\Sigma_{\mathrm{out},1} $,\vspace*{3pt}
\cr
\triu{\displaystyle\frac{1}{1-z^{-2m}} e^{2t\phi} }, &\quad $
z \in\Sigma_{\mathrm{out},-1}.$ } %
\end{eqnarray}
\end{enumerate}
\end{rhp}

Then from~\eqref{eqcase1-1} and~\eqref{eqcase1-2}, we find that
$V_T(z)=I+\mathcal{O}  ( e^{-c\max\{n, 2m-n\}}  )$
uniformly for $z$ on the contour.
Hence we obtain the following result.

%
\begin{prop}\label{propexpsmall}
Let $\bY(z;t,n,m)$ be the solution to the RHP~\eqref{rhpdiscrete}.
For any $\delta>0$, there exists a constant $c>0$ such that, if
%
\begin{equation}
\label{eqregime1-1} n\ge2t(1+\delta),\qquad 2m-n\ge2t(1+\delta),
\end{equation}
then
%
\begin{equation}
\label{eqregime1-2} \bY(0;t,n,m) \pmatrix{ 0 & -1
\cr
1 & 0 } = I+ \mathcal{O}
\bigl( e^{-c\max\{n, 2m-n\}} \bigr).
\end{equation}
In particular,
%
\begin{equation}
\label{eqregime1-3} \pi_{n,m}(0;t)=\mathcal{O} \bigl( e^{-c\max\{
n, 2m-n\}}
\bigr).
\end{equation}
\end{prop}

\section{Painlev\'e regime}\label{secRHPpainleveregime}

$\!\!$We now consider the parameters $(n,m,t)$ in regime~\eqref{eqregime2},
%
\begin{equation}
\label{eqregime2-1} 2t-Lt^{1/3}\le n\le2t(1+\delta), \qquad 2t-L
t^{1/3}\le2m-n\le2t(1+\delta)
\end{equation}
for fixed $L>0$ and $\delta>0$.
We assume that $\delta<1/2$; see the discussion before~\eqref{CFUcoordinates}.

%
\begin{figure}[b]

\includegraphics{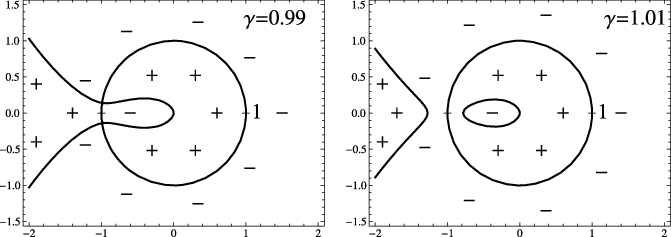}

\caption{The sign of $\re\theta(z;\gamma)$ for values of $\gamma$ near
$\gamma_{\mathrm{crit}} = 1$.
Note the sign change near $z=-1$ on either side of the transition.
}\label{figsigns}
\end{figure}

Let $\bS(z)$ be same as in the previous section.
When $\gamma\in[1-\delta, 1+\delta]$, estimate~\eqref{eqcase1-1}
does not hold any more.
However, it is easy to check using a similar calculation as before that
the exponential decay still holds in an annular sector away from the
point $z=-1$. [Note that the (double) critical point of $\theta(z;1)$
is $z=-1$.]
More precisely, one can check that given $\delta\in(0, 1)$, there
exist positive constants
$\alpha_1\ge O(\delta^{1/2})>0$
and $\rho_1 \ge O(\delta^{1/2})$ such that
if $\gamma\in[1-\delta, 1+\delta]$, then
$|e^{2\theta(z;\gamma)}|<1$ for $z$ in the annular sector
$\sector_{\mathrm{in},-1}:=\{ z=re^{i\alpha}  \dvtx \rho_1 < r < 1,   |\alpha
| <
\pi-\alpha_1\}$.
Moreover, if $z$ is in a compact subset of $\sector_{\mathrm{in},-1}$, then
there exists $c>0$ such that
$|e^{2\theta(z;\gamma)}|\le e^{-c}$ uniformly in $\gamma\in
[1-\delta,
1+\delta]$; see Figure~\ref{figsigns}.

Similarly, from the symmetry $\phi(z;\gamma)=\theta(-z^{-1}; \gamma)$,
under the same assumptions,
$|e^{2\phi(z;\tilde\gamma)}|<1$ for $z$ in the annular sector
$\sector_{\mathrm{out},1}:=\{ z=\frac1{r}e^{i\alpha}  \dvtx \rho_1 < r < 1,   \alpha_1\le|\alpha| \le\pi\}$.
Note the change of the condition on the angle from $\sector_{\mathrm{in},-1}$;
the (double) critical point of $\phi(z;1)$ is $z=1$.
As before, if $z$ is in a compact subset of $\sector_{\mathrm{out},1}$, then
there exists $c>0$ such that
$|e^{2\phi(z;\gamma)}|\le e^{-c}$ uniformly in $\gamma\in[1-\delta,
1+\delta]$.

Now define $\bT$ by~\eqref{Tdef} as before. In doing so,
we take $C_{\mathrm{in},1}$ and $C_{\mathrm{in}, -1}$ to lie in the annulus $\rho_1<|z|<1$, and take
$C_{\mathrm{out}, 1}$ and $C_{\mathrm{out}, -1}$ to lie in the annulus $1<|z|<1/\rho_1$.
Then the jump matrix in~\eqref{Tjumps} satisfies
%
\begin{equation}
\label{eqVTexp-1} V_T(z)= I + \mathcal{O} \bigl( e^{-ct}
\bigr)
\end{equation}
uniformly for $\gamma, \tilde\gamma\in[1-\delta, 1+\delta]$
and for $z$ in all the contours except for
$(C_{\mathrm{in}, -1}\cup C_{\mathrm{out}, -1})\cap\{|\operatorname{arg}(z)|>\pi-\alpha_1\}$ and
$(C_{\mathrm{in}, 1}\cup C_{\mathrm{out}, 1})\cap\{|\operatorname{arg}(z)|<\alpha_1\}$.
The parts of the contour where~\eqref{eqVTexp-1} is not valid are
handled by introducing local parametrix that can be solved by the RHP
for the Painlev\'e II equation; see Section \ref{secpainleve}.
Such a ``Painlev\'e parametrix'' was introduced in the analysis of
\cite
{BDJ} on a similar orthogonal polynomials but with a continuous weight.
A drawback of the analysis of \cite{BDJ} was that the parametrix was
solved asymptotically rather than exactly as in other cases such as
\cite{DKMVZa,DKMVZb}.
The exactly matching Painlev\'e parametrix was constructed later in
\cite{CK}. The construction of \cite{CK} requires, in the context of
this paper, that $\gamma\in[1-Lt^{-2/3}, 1+Lt^{-2/3}]$. In a recent
paper \cite{BMiller}, a different approach to the exact construction of
the Painlev\'e parametrix was introduced.
This construction has the advantage that it works for all $\gamma$ (and
$\tilde\gamma$) in regime~\eqref{eqregime2}.


We seek a global parametrix in the form
%
\begin{equation}
\label{parametrix} \bA(z) = \cases{ \bA_1(z), &\quad $z \in
\U_1$,
\cr
\bA_{-1}(z), &\quad $z \in\U_{-1}$,
\cr
I, & \quad $\mbox{elsewhere},$ }
\end{equation}
where $\U_{\pm1}$ are sufficiently small, fixed size, neighborhoods of
$\pm1$.
Later we will fix the size of $\U_{\pm1}$ first and then choose
$\delta
$ small enough so that
$\U_{-1}$ contains
$(C_{\mathrm{in}, -1}\cup C_{\mathrm{out}, -1})\cap\{|\operatorname{arg}(z)|>\pi-\alpha_1\}$
and $\U_1$ contains
$(C_{\mathrm{in}, 1}\cup C_{\mathrm{out}, 1})\cap\{|\operatorname{arg}(z)|<\alpha_1\}$
so that~\eqref{eqVTexp-1}
is valid for all $z$ in the contour of $\bT$ except for in $\U_{\pm1}$.

\subsection{Local models near $1$ and $-1$}
In order to construct exactly matching parametrices $\bA_{\pm1}$, we
need to introduce Langer transformations which map the local phase
functions $\theta$ and $\phi$ to the Painlev\'e phase \eqref{PIIphase}
in $\U_{-1}$ and $\U_{1}$, respectively.

The phase $\theta(z; \gamma)$ is analytic in $z$ in the neighborhood
$|z+1|<1$ (and entire in $\gamma$) and admits the expansion
%
\begin{eqnarray}
\label{phaseexpansion} \theta(z;\gamma) &=& (1-\gamma) (z+1) +
\frac{1-\gamma}{2}(z+1)^2 + \frac{3-2\gamma}{6}(z+1)^3
\nonumber
\\[-8pt]
\\[-8pt]
\nonumber
&&{} +
\mathcal{O} \bigl( (z+1)^4 \bigr).
\end{eqnarray}
At the critical value $\gamma=1$ the expansion degenerates to a cubic
at leading order; for values of $\gamma$ near $1$ the cubic unfolds
either into three real or one real and two complex roots near $z = -1$.
The double critical point--double root of $\theta'(z;1)$--unfolds into
a pair of simple critical points near $z = -1$,
%
\begin{eqnarray}
\label{allbandcriticalpoints} \frac{\mathrmm{d}^{ } \theta
}{\mathrmm{d} z^{ } } = 0 \quad\Rightarrow\quad
z_\pm= -\gamma\pm\sqrt{\gamma^2 -1}.
\end{eqnarray}
Note that the relation $\phi(z; \tg) = - \theta(-z;\tg)$ implies that
$\phi$ admits a similar expansion about $z=1$ with the same structure.

As the cubic coefficient in \eqref{phaseexpansion} is bounded away
from zero
(note that $\gamma\le1+\delta< 3/2$) we make use of a classical result
of \cite{CFU} to introduce new parameters $a(\gamma)$ and $b(\gamma)$
such that the relation
%
\begin{equation}
\label{CFUcoordinates} \tfrac{4}{3} f(z;\gamma)^3 + a(\gamma)
f(z; \gamma) + b(\gamma) = - i \theta(z; \gamma) , \qquad z \in\U_{-1}
\end{equation}
%
defines an invertible conformal mapping $f = f(z)$ from a sufficiently
small, $\gamma$-independent, neighborhood $\U_{-1}$ onto $f(\U_1)$ such
that the parameters $a$ and $b$ depend continuously on $\gamma$ near $1$.
It was shown in \cite{CFU} (see also \cite{Friedman}) that there exist
$\delta_1>0$ and a $\gamma$-independent neighborhood $\U_{-1}$ such
that the above map is conformal
in $\U_{-1}$ for all $\gamma\in[1-\delta_1, 1+\delta_1]$ if
the critical points $f_{\pm}= \pm\sqrt{-a}/2$ of the left-hand side,
seen as a function of $f$,
correspond to the critical points 
$z_{\pm}$ of $\theta(z;\gamma)$.
This means that the left-hand side of~\eqref{CFUcoordinates} evaluated
at $f=f_{\pm}$
should equal to the right-hand side of~\eqref{CFUcoordinates}
evaluated at $z=z_{\pm}$.
These two conditions determine parameters $a$ and $b$ as
%
\begin{eqnarray}
b(\gamma)& =& \frac{-i}{2} \bigl[ \theta(z_+; \gamma) +
\theta(z_-; \gamma) \bigr],
\nonumber
\\[-8pt]
\\[-8pt]
\nonumber
 \bigl(-a(\gamma) \bigr)^{3/2} &=&\frac{3i}{2} \bigl(\theta(z_+;
\gamma) - \theta(z_-; \gamma) \bigr). %
\end{eqnarray}
Since $\theta(z_+; \gamma)= -\theta(z_-; \gamma)=\sqrt{\gamma^2
-1} -
\gamma\log( \gamma+ \sqrt{\gamma^2 -1})$,
we have
%
\begin{equation}
b(\gamma) = 0.
\end{equation}
There are three choices of branch of $a(\gamma)$. We choose the branch
so that
%
\begin{equation}
\label{CFUparameters} %
a(\gamma) = - \bigl[3i \bigl( \sqrt{
\gamma^2 -1} - \gamma\log \bigl(\gamma+ \sqrt{\gamma^2 -1}
\bigr) \bigr) \bigr]^{2/3} 
\end{equation}
satisfies the power series expansion
%
\begin{equation}
\label{eqagam-1} a(\gamma) = 2(\gamma-1)-\tfrac1{15} (\gamma-1)^2+
\mathcal{O} \bigl( (\gamma-1)^3 \bigr).
\end{equation}
To verify this, it is useful to note that $\frac{d^2}{d\gamma^2}
[\sqrt {\gamma^2 -1} - \gamma\log( \gamma+ \sqrt{\gamma^2 -1}) ]=
-(\gamma^2-1)^{-1/2}$.
With this choice of $a$, we have
%
\begin{eqnarray}
\label{CFUexpansion} f(z; \gamma) &=& \frac{i(\gamma- 1)}{a}(z+1) + \frac{i(\gamma-1)}{2a}
(z+1)^2
\nonumber
\\[-8pt]
\\[-8pt]
\nonumber
&&{}+ \frac{1}{6i} \biggl(\frac{3-2\gamma}{a} - 8 \frac{(\gamma-1)^3}{a^4} \biggr)
(z+1)^3 + \mathcal{O} \bigl( (z+1)^4 \bigr).
\end{eqnarray}
Inserting~\eqref{eqagam-1}, we obtain
%
\begin{eqnarray}
\label{CFUcriticalexpansion} f(z;\gamma)& =& \frac{i}{2}(z+1) \biggl[ 1 +
\frac{1}{2}(z+1) + \frac{7}{20} (z+1)^2 + \mathcal{O}
\bigl( (z+1)^3 \bigr) \biggr]
\nonumber\\
&&{}+\frac{i}{60}(\gamma-1) (z+1) \biggl[ 1 + \frac{1}{2}(z+1) +
\mathcal{O} \bigl( (z+1)^2 \bigr) \biggr] \\
&&{}+ \mathcal{O} \bigl( (
\gamma-1)^2(z+1) \bigr) .\nonumber
\end{eqnarray}

Define the rescaled coordinates (Langer coordinates) $\zeta=\zeta
(z;\gamma)=\break t^{1/3} f(z;\gamma)$ for $z\in\U_{-1}$, and set
%
\begin{equation}
\label{eqsdef} %
s=s(\gamma)= t^{2/3}a(\gamma). %
\end{equation}
Then [see~\eqref{PIIphase}]
%
\begin{eqnarray}
\label{langervariable} t\theta(z; \gamma) =i \bigl(\tfrac{4}{3}
\zeta^3 + s \zeta \bigr)= i\theta_{\mathit{PII}}(\zeta,s),\qquad  z \in
\U_{-1}.
\end{eqnarray}
%
We note from~\eqref{eqagam-1} that for the parameters $(n,m,t)$ in
regime~\eqref{eqregime2},
%
\begin{equation}
 s(\gamma)\ge-2L
\end{equation}
for all large enough $t$. We also have
%
\begin{equation}
\label{sexpansion} s(\gamma) = 2t^{2/3}(\gamma-1) - \frac
{(2t^{2/3}(\gamma-1) )^2}{60}
t^{-2/3} + \mathcal{O} \bigl( t^{2/3}(\gamma-1)^3
\bigr).
\end{equation}
%

We introduce similar coordinates in $\U_1$. This can be easily achieved
by noting the
symmetry $\phi(z,\tg)=-\theta(-z,\tg)$.
We set $\U_1= -\U_{-1}$ and define $f(z;\tg):= -f(-z; \tg)$ for
$z\in\U_1$.
Then we find, with the same choice of $a$ and~$b$,
%
\begin{equation}
\label{CFUcoordinates-2} \tfrac{4}{3} f(z;\tg)^3 + a(\tg) f(z;
\tg) =- i \phi(z; \tg),\qquad  z \in\U_1 .
\end{equation}
Defining $\zeta=\zeta(z;\gamma)=t^{1/3} f(z;\gamma)$, $z\in\U_{-1}$
and $s=s(\gamma)= t^{2/3}a(\gamma)$ as before, we obtain
%
\begin{equation}
\label{langervariable-2} t\phi(z; \tg) =i \bigl(\tfrac{4}{3} \zeta(z;
\tg)^3 + s(\tg) \zeta(z;\tg) \bigr)= i\theta_{\mathit{PII}}(
\zeta,s),\qquad  z \in\U_{-1}.
\end{equation}
Note the symmetry
%
\begin{equation}
\label{zetasymmetry} \zeta(z ; \tg) = -\zeta(-z; \tg) ,\qquad  z\in \U_1.
\end{equation}

We take $\delta$ such that $\delta<\min\{1/2, \delta_1\}$ where
$\delta_1$
we introduced in defining $f$ in~\eqref{CFUcoordinates}.
Then consider the parameters $(n,m,t)$ satisfying~\eqref{eqregime2}.

Consider the image of $\U_{-1}$ under the map $z\mapsto\zeta
(z;\gamma)$.
From~\eqref{CFUcriticalexpansion}, we find that there exists $\delta_2>0$ such that
for $\gamma\in[1-\delta_2, 1+\delta_2]$,
$\zeta(\U_{-1};\gamma)$ contains a disk centered at $0$ and of radius
$\ge\mathcal{O}  ( t^{1/3}  )$ in the $\zeta$-plane.
The same holds for $\zeta(\U_{1}; \tg)$.
Note that from \eqref{CFUcriticalexpansion},
the image contours $\zeta(C_{-1,\mathrm{in}/\mathrm{out}})$ are oriented left-to-right and
the image contours $\zeta(C_{1,\mathrm{in}/\mathrm{out}})$ are oriented right-to-left as
depicted in Figure \ref{figzetamap}.

%
\begin{figure}

\includegraphics{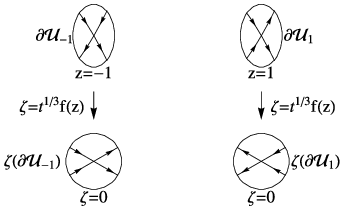}

\caption{Images of the contours near $z=\pm1$ under $\zeta$.}\label
{figzetamap}
\end{figure}

We now use $\zeta$ to map the local contours and jump matrices inside
$\U_{\pm1}$ onto the jumps of the Painlev\'e parametrix, RHP \ref
{rhpPII}.
We locally deform, if necessary, the contours $C_{\pm1, \mathrm{in}/\mathrm{out}}$
so that the image contours $\zeta(C_{\pm1, \mathrm{in}/\mathrm{out}}\cap\U_{\pm1})$
become 
the rays $\Gamma_i$, $i=1,3,4,6$ described in \eqref{eq73}, and
we extend $C_{\pm1, \mathrm{in}/\mathrm{out}}\cap(\U_{-1}\cup\U_1)$ to the rest of
$C_{\pm1, \mathrm{in}/\mathrm{out}}$ so that estimate~\eqref{eqVTexp-1} holds for $z$
on the contour outside of $\U_{\pm1}$.
The exact shape of the contours are not important. Reorienting the
image contours, if necessary, to go from left-to-right and using \eqref
{langervariable} and \eqref{langervariable-2} the image contours and
jumps are, up to a conjugation by a constant matrix, exactly those of
the Painlev\'e parametrix, RHP \ref{rhpPII}.


Let $\bpsi(\zeta, s)$ be the solution of the Painlev\'e II model
problem, RHP \ref{rhpPII}.
Set $\sigma_2= \bigl({ 0 \enskip-i \atop i \enskip0 } \bigr)$ and recall that
$\sigma_3= \bigl( { 1 \enskip0 \atop0 \enskip1 } \bigr)$.
Taking into account the orientation of $\zeta(C_{\pm1, \mathrm{in}/\mathrm{out}}\cap\U_{\pm1})$,
we define the local models
%
\begin{eqnarray}
\label{localmodels} %
\bA_{-1}(z) &=& \bA_{-1}(z;
\gamma) := \sigma_2\sigma_3^{n} \bpsi \bigl(
\zeta(z;\gamma); s(\gamma) \bigr) \sigma_3^{n}
\sigma_2,\qquad z \in\U_{-1},
\nonumber
\\[-8pt]
\\[-8pt]
\nonumber
\bA_{1}(z) &=& \bA_{1}(z; \tg) := \sigma_2
\bpsi \bigl(\zeta(z; \tg); s(\tg) \bigr) \sigma_2,\qquad z \in
\U_{1}. %
\end{eqnarray}
Note from symmetries \eqref{psisymmetry} and~\eqref{zetasymmetry}
that these two models are related as
%
\begin{equation}
\label{eqA1A-1symm} \bA_1(z ,\tg) = \sigma_1
\sigma_3^n \bA_{-1}(-z, \tg)
\sigma_3^n \sigma_1, \qquad z\in
\U_{1}.
\end{equation}
From~\eqref{langervariable} and~\eqref{langervariable-2}, $\bA_{\pm
1}(z)$ satisfies the same jump condition as $\bT(z)$ in $\U_{\pm1}$,
respectively.

Define the ratio of the global parametrix to the exact problem $\bT(z)$,
%
\begin{equation}
\bR(z) = \bT(z) \bA^{-1}(z).
\end{equation}
Then $\bR(z)$ has no jumps inside $\U_{\pm1}$, but gains jumps on the
positively oriented boundaries $\partial\U_{\pm1}$. Let $\Sigma_R^0 =
\Sigma_{\mathrm{in}} \cup\Sigma_{\mathrm{out}} \cup C_{\mathrm{in}, \pm1} \cup C_{\mathrm{out}, \pm1}
\setminus(\U_1 \cup\U_{-1} )$; see Figure~\ref{figRcontours}. Then $\bR$ satisfies the following problem:

%
\begin{figure}

\includegraphics{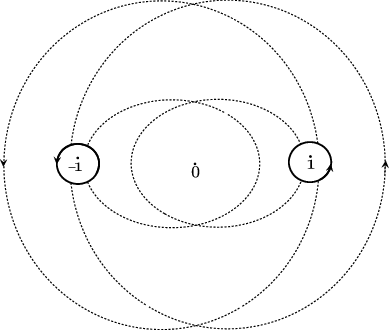}

\caption{The jump contours for the residual $\bR(z)$. The dashed lines
represent contours on which
the jumps are exponentially near identity.}
\label{figRcontours}
\end{figure}

%
\begin{rhp}[\textsc{for $\bR(z)$}]\label{rhpR}
Find a $2\times2$ matrix $\bR(z)$ such that:
\begin{longlist}[(1)]
\item[(1)] $\bR(z)$ is analytic in $\C\setminus\Sigma_R$ where
$\Sigma_R = \Sigma_R^0 \cup\partial\U_1 \cup\partial\U_{-1}$.
\item[(2)] $\bR(z) \to I$ as $z \to\infty$.
\item[(3)] The boundary values of $\bR$ satisfy the jump relation
$\bR_+ = \bR_- V_R$ where
%
\begin{eqnarray}
V_R(z) = \cases{ \bA_1(z)^{-1} , &\quad  $z \in
\partial\U_1$,
\cr
\bA_{-1}(z)^{-1} , &\quad $z \in
\partial\U_{-1}$,
\cr
V_T(z), & \quad $z \in\Sigma_R^0$.
}
\end{eqnarray}
\end{longlist}
\end{rhp}

The jumps of $\bR(z)$ are now everywhere uniformly near identity. In fact,
for the parameters $(n,m,t)$ in regime~\eqref{eqregime2},
it follows from \eqref{eqVTexp-1},
%
\begin{equation}
\label{Rjumpbounds} \Vert V_R- I \Vert_{L^\infty(\Sigma_R^0 )} =
\mathcal{O} \bigl( e^{-c t} \bigr),
\end{equation}
and from \eqref{localmodels} and~\eqref{PIIexpansion} that (recall
that $\zeta(\U_{\pm1}; \gamma)$ contains a disk of radius $\ge
\mathcal{O}  ( t^{1/3}  )$ for all $\gamma\in[1-\delta,
1+\delta]$)
%
\begin{equation}
\label{Rjumpbounds-2} \Vert V_R- I \Vert_{L^\infty(\U_{\pm1})} =
\mathcal{O} \bigl( t^{-1/3} \bigr).
\end{equation}
(We will use a better estimate for the latter below.)
The above estimates establish that $\bR$ falls into the class of small
norm RHPs for any sufficiently large $t$. Let $C_-\dvtx L^2(\Sigma_R) \to
L^2(\Sigma_R)$ denote the usual Cauchy projection operator and define
%
\begin{eqnarray}
\label{CR}\qquad  C_{V_R} [ f ](z) := C_- \bigl[ f(w) (V_R - I)
\bigr] = \frac{1}{2 \pi i} \int_{\Sigma_R} \frac{f(w) (V_R(w) - I)}{(w-z)_-} \dd
w
\end{eqnarray}
and
%
\begin{equation}
\CC_R[f](z) := \frac{1}{2\pi i} \int_{\Sigma_R}
\frac{f(w) (V_R(w) - I)}{(w-z)} \dd w,
\end{equation}
which maps $f \in L^2(\Sigma_R)$ to an analytic function in $\C
\setminus\Sigma_R$.
Then as $C_-$ is a bounded $L^2$ operator whose operator norm is
uniformly bounded
(see, e.g.,~\cite{BK})
and the contours $\Sigma_R$ are finite length, it follows that $\|
C_{V_R} \|_{L^{2}\to L^2} = \mathcal{O}  ( t^{-1/3}  )$ for
large $t$ which
guarantees the existence of a unique solution to $ (1 - C_{V_R}
)
\mu= I$. Once the existence of $\mu(z)$ is established, it follows\vadjust{\goodbreak} immediately
from the general theory of RHPs that
%
\begin{eqnarray}
\label{Rsolution} \bR(z) := I + \CC_R[\mu](z)=I +
\frac{1}{2\pi i} \int_{\Sigma_R} \frac{ \mu(w) (V_R(w) - I)}{w-z} \,dw
\end{eqnarray}
is the solution of RHP \ref{rhpR}.

Unfolding the series of transformations $\bY\mapsto\bQ\mapsto\bS
\mapsto\bT\mapsto\bR$ we have $\bY(0) = \bR(0) \bigl({0\atop -1 }\enskip {1\atop 0}\bigr)$, and
from \eqref{pinrecovery} it follows that
%
\begin{equation}
\label{pin0inR} \pi_{n,m}(0;t) = - \bR_{12}(0;t,n,m) =
\bR_{21}(0;t,n,m).
\end{equation}
We now evaluate $\bR(0;t,n,m)$ explicitly for the first three terms in
the asymptotic expansion.
But we first consider the corresponding RHP for the continuous weight
in the next subsection.
We will compare the discrete weight problem to the continuous weight problem.

\subsection{Analysis of the continuous weight problem}\label
{seccontinuousweight}

A streamlined version of the above procedure reducing the discrete
problem, RHP \ref{rhpdiscrete}, to small-norm form can be used to
study the continuous weight problem, RHP \ref{rhpcontinuous}. \emph
{Using the same $g$-function used in the discrete case}, we define $\bY^\infty\mapsto\bS^\infty$ as in \eqref{Sdef}, replacing $\bQ$ with
$\bY^\infty$. The new RHP for $\bS^\infty$ features the single phase
$\theta(z;\gamma)$ defined by \eqref{allbandphases} which we recall
has a critical value at $z=-1$. In the ``exponentially small regime''
\eqref{eqregime1} estimate~\eqref{eqcase1-1} holds and just as in
Proposition~\ref{propexpsmall}, we have in the end
%
\begin{equation}
\label{eqpiinfexpsmall} \pi_{n,\infty}(0;t) = \mathcal{O} \bigl(
e^{-c n} \bigr)\qquad \mbox{for $(n,t)$ satisfying \eqref{eqregime1}.}
\end{equation}
In the Painlev\'e regime \eqref{eqregime2}, by introducing a
simplified version of transformation~\eqref{Tdef}, using only the
factors appearing in $\Omega_{\pm,0}$ to open lenses, one defines a
transformation $\bS^\infty\mapsto\bT^\infty$. The problem for $\bT^\infty$ is then approximated by a parametrix which is identity outside
a neighborhood $\U_{-1}$ of $z=-1$ and inside $\U_{-1}$ is approximated
by the same model as the discrete case, $\bA_{-1}(z)$ defined by~\eqref
{localmodels}. The result is a small norm problem $\bR^\infty$ for the
continuous case where
%
\begin{equation}
\label{continuousR} \bR^\infty(z) = I + \frac{1}{2\pi i} \int
_{\Sigma_{R^\infty}} \frac{\mu^\infty(w) ( V_{R^\infty
}(w)-I)}{w-z} \,dw,
\end{equation}
where
%
\begin{equation}
\label{continuouserrorjump} V_{R^\infty}(z) = \cases{ \bA_{-1}(z)^{-1},
& \quad $z \in\partial\U_{-1}$,
\cr
I+ \mathcal{O} \bigl( e^{-ct}
\bigr), &\quad  $z \in\Sigma_R^\infty \setminus \partial
\U_{-1}$. }
\end{equation}
Moreover, the continuous weight orthogonal polynomial $\pi_n^\infty(0)$
is given by
%
\begin{equation}
\label{pincontinuousrecovery} \qquad\pi_{n,\infty}(0;t) = - \bR^\infty_{12}(0;t,n)
= \bR^\infty_{21}(0;t,n) \qquad\mbox{for $(n,t)$ satisfying
\eqref{eqregime2}.}
\end{equation}

%
%
\subsection{Expansion of $\bR(0)$}\label{secR0}

In this section we calculate the asymptotic expansion of
%
\begin{eqnarray}
\label{R0integralform--1} \bR(0) =I + \CC_R[\mu](0) = I +
\frac{1}{2\pi i} \int_{\Sigma_R} \frac{ \mu(w) (V_R(w)-I) }{w} \dd w
\end{eqnarray}
up to order $\mathcal{O}  ( t^{-1}  )$. We begin by
representing $\mu$ using its
Nuemann series expansion,
%
\begin{equation}
\label{museries--1} \mu(z) = I+ \sum_{k=1}^\infty
(C_R)^k[I],
\end{equation}
which, due to \eqref{Rjumpbounds} and~\eqref{Rjumpbounds-2},
convergences uniformly and absolutely.
In both~\eqref{R0integralform--1} and~\eqref{museries--1}, the
dominant contribution to the integral comes from the boundaries
$\partial\U_{\pm1}$.
In fact, denoting by $P_0$ the projection operator onto $\Sigma_R\setminus(\partial\U_{-1}\cup\partial U_1)$,
we find from \eqref{Rjumpbounds} that $\llVert C_R P_0 \rrVert_{L^2(\Sigma_R)\to L^2(\Sigma_R)} = \mathcal{O}  ( e^{-ct}
)$
and
$\llVert\CC_R P_0 \rrVert_{L^2(\Sigma_R)\to L^2(\Sigma_R)} =
\mathcal{O}  ( e^{-ct}  )$.

Denoting by $P_{\pm1}$ the projection operator onto $\partial\U_{\pm
}$, respectively,
define $C_{\pm1}:= C_RP_{\pm1}$ and $\CC_{\pm1}:= \CC_R P_{\pm1}$:
for any $f \in L^2(\Sigma_R)$,
%
\begin{eqnarray}
\label{C1} %
C_{\pm1}[ f ](z) &=& \frac{1}{2\pi i}
\oint_{\partial\U_{\pm1}} \frac{ f(w) (V_R(w) - I)
}{(w-z)_-} \dd w,\qquad z\in \Sigma_R,
\nonumber
\\[-8pt]
\\[-8pt]
\nonumber
\CC_{\pm1}[ f ](z) &=& \frac{1}{2\pi i} \oint_{\partial\U_{\pm1}}
\frac{ f(w) (V_R(w) - I) }{(w-z)} \dd w,\qquad  z\notin\Sigma_R. %
\end{eqnarray}
Then we find 
%
\begin{equation}
\label{R0integralform} \bR(0) =I + (\CC_{-1}+\CC_{1})[
\mu](0) + \mathcal{O} \bigl( e^{-ct} \bigr), 
\end{equation}
where
%
\begin{equation}
\label{museries} \mu(z) = I+ \sum_{k=1}^\infty
(C_{-1}+C_1)^k[I] (z)+ \mathcal{O} \bigl(
e^{-ct} \bigr).
\end{equation}
%

Recall $s(\gamma)$ defined in~\eqref{eqsdef}.
Introduce the shorthand $s=s(\gamma)$ and $\ts= s(\tg)$.
Using \eqref{PIIexpansion},~\eqref{eqpsiallsmall} and \eqref{localmodels} we have
\begin{subequations}\label{jumpexpansions}
%
\begin{eqnarray}\qquad
&&V_R(z)-I
\nonumber
\\[-6pt]
\\[-8pt]
\nonumber
&&\qquad = \cases{ \displaystyle\frac{\varphi_1(s)}{t^{1/3} f(z;\gamma)} + \frac
{\varphi_2(s)}{t^{2/3} f(z;\gamma)^2} +
\frac{\varphi_3(s)}{t^{-1} f(z;\gamma)^3}+ \mathcal{O} \biggl( \frac {e^{-c_0|s|^{3/2}}}{t^{4/3}} \biggr),\vspace*{3pt}\cr
 \quad z \in
\partial \U_{-1},\vspace*{3pt}
\cr
\displaystyle\frac{\phi_1(\ts)}{t^{1/3} f(z;\tg)} + \frac{\phi_2(\ts)}{t^{2/3}
f(z;\tg)^2} +
\frac{\phi_3(\ts)}{t^{-1} f(z;\tg)^3}+\mathcal{O} \biggl( \frac
{e^{-c_0|s|^{3/2}}}{t^{4/3}} \biggr), \vspace*{3pt}\cr
\quad z \in\partial
\U_{1} ,}
\end{eqnarray}
with
%
\begin{eqnarray}
\varphi_1(s) &=& \frac{1}{2i} \left[\matrix{
-u(s) & -(-1)^n q(s)
\cr
(-1)^n q(s) & u(s)} \right],
\nonumber\\
\qquad\quad\varphi_2(s) &=& \frac{1}{(2i)^2} \left[\matrix{
\displaystyle\frac{1}{2}u(s)^2 - \frac{1}{2}q(s)^2 &
(-1)^n \bigl( q(s) u(s) - q'(s) \bigr)
\cr
(-1)^n \bigl( q(s) u(s) - q'(s) \bigr) &
\displaystyle\frac{1}{2}u(s)^2 - \frac{1}{2}q(s)^2 }
\right],
\\
\varphi_3(s) &= &\frac{1}{(2i)^3} \left[ \matrix{ \alpha(s) &
(-1)^n \beta(s)
\cr
-(-1)^n \beta(s) & -\alpha(s) }
\right]\nonumber %
\end{eqnarray}
and
%
\begin{equation}
\phi_k(\ts) = \sigma_3^n
\varphi_k(\ts) \sigma_3^n,\qquad k=1,2,3,
\end{equation}
\end{subequations}
where $q$ is defined by \eqref{HMasymp} and $u$,
$\alpha$ and $\beta$ are defined in \eqref{PIIudef}--\eqref{PIIexpansion-d}.

It follows from inserting the above expansions into~\eqref{R0integralform} and~\eqref{museries} that each iteration of $C_1$ or
$C_{-1}$ introduces a factor of $t^{-1/3}$; thus we are led to an
expansion of the form.
%
\begin{equation}
\label{R0expansion} \bR(0) = I + \sum_{k=1}^N
R^{(k)} t^{-k/3} + \mathcal{O} \biggl( \frac
{e^{-c_0|s|^{3/2}}}{t^{(N+1)/3}}
\biggr),
\end{equation}
where $R^{(1)} := t^{1/3}(\CC_1[I](0)+\CC_{-1}[I](0))$,
%
\begin{equation}
R^{(k)} := t^{k/3} \sum_{\vec{\tau} \in\{-1,1\}^{k-1}} (
\CC_1 + \CC_{-1})C_{\vec\tau}[I](0),\qquad  k \geq2.
\end{equation}
Here $C_{\vec\tau}$ is a multi-index understood as follows: given
$\vec
\tau=(\tau_1, \tau_2, \ldots,\tau_k) \in\{-1,1\}^k$ we define
$C_{\vec
\tau} := C_{\tau_1} C_{\tau_2} \cdots C_{\tau_k}$. Though we have
suppressed the dependence, each $R^{(k)}$ is a function of $t$.
Moreover, since both $s$ and the coefficients in the expansion \eqref
{CFUexpansion} depend on $\gamma$, each $R^{(k)} = \mathcal{O}
( 1  )$ with an
expansion in powers of $t^{-1/3}$.

At each order we can split the composition of Cauchy integrals into
three parts. Define
%
\begin{eqnarray}
\label{Rkdefs} %
R_1^{(k)} &=&
t^{k/3} \CC_1 C_1^{k-1}[I](0),\nonumber
\\
R_{-1}^{(k)} &=& t^{k/3} \CC_{-1}
C_{-1}^{k-1}[I](0),
\\
R_X^{(k)} &=& R^{(k)} -R^{(k)}_{1}
- R^{(k)}_{-1}. \nonumber %
\end{eqnarray}
Note that from definition, $R_X^{(1)}=0$.
Intuitively, the first two ``pure'' terms contain the expansions of the
continuous weight polynomials related to the marginal distributions
while the last term contains the ``cross'' terms. This can be made
concrete as follows. Let $\bR_{\pm1}(0)$ and $\bR_{X}(0)$ denote the
sum of each type of contribution to $\bR(0)$,
%
\begin{equation}
\label{eqR0expRp} \bR_{p}(0) := I + \sum
_{k=1}^\infty\frac{R^{(k)}_p}{t^{k/3}},\qquad  p=1,-1,X.
\end{equation}
%

Clearly, $\bR_1(0)$ and $\bR_{-1}(0)$ are the values at origin of
normalized Riemann--Hilbert problems whose jump conditions are
%
\begin{eqnarray}
(\bR_{-1})_+(z) &=& (
\bR_{-1})_-(z) \bA_{-1}(z, \gamma)^{-1},\qquad  z \in
\partial\U_{-1},
\nonumber
\\[-8pt]
\\[-8pt]
\nonumber
(\bR_1)_+(z) &=& (\bR_1)_-(z) \bA_1(z,
\tg)^{-1},\qquad z \in\partial\U_1.
\end{eqnarray}
Recalling \eqref{continuousR} and \eqref{continuouserrorjump}
we see that $\bR_{-1}(z)$ and $\bR^{\infty}(z;t,n)$ have the same jump
condition up to the exponentially small contributions from $\Sigma_{R^\infty} \setminus\partial\U_{-1}$. Hence
%
\begin{equation}
\bR^{\infty}(0;t,n) = \bigl[ I +\mathcal{O} \bigl(
e^{-ct} \bigr) \bigr]\bR_{-1}(0). %
\end{equation}
Also from~\eqref{eqA1A-1symm}, the jump of $\bR_1(z)$ is same as that
of $\sigma_1 \sigma_3^n \bR^{\infty}(0,t,2m-n)\sigma_3^n \sigma_1$, and
hence we find that
%
\begin{equation}
\sigma_1 \sigma_3^n
\bR^{\infty}(0,t,2m-n)\sigma_3^n
\sigma_1 = \bigl[I + \mathcal{O} \bigl( e^{-ct} \bigr)
\bigr] \bR_{1}(0) . %
\end{equation}
Therefore, from \eqref{pincontinuousrecovery} it follows that
%
\begin{eqnarray}
\label{eq122-1} %
\pi_{n,\infty}(0;t) &=& -(\bR_{-1})_{12}(0)
+ \mathcal{O} \bigl( e^{-ct} \bigr),
\nonumber
\\[-8pt]
\\[-8pt]
\nonumber
\pi_{2m-n,\infty}(0;t) &=& (-1)^n(\bR_{1})_{12}(0)
+ \mathcal{O} \bigl( e^{-ct} \bigr), %
\end{eqnarray}
and hence from \eqref{pin0inR}, \eqref{R0expansion} and
\eqref
{eqR0expRp}, we find that
%
\begin{eqnarray}
\label{eq122} \quad %
\pi_{n,m}(0;t) &= \pi_{n,\infty}(0) -
(-1)^n \pi_{2m-n,\infty}(0) - (\bR_X)_{12}(0)
+ \mathcal{O} \bigl( e^{-ct} \bigr) . %
\end{eqnarray}

From~\eqref{eqR0expRp}, we now need to evaluate $R_p^{(k)}, p=-1,1,X$, $k=1,2,3$.
This calculation is a straightforward but lengthy application of
residue calculus. We summarize the result of the calculations which
follow directly from the definitions~\eqref{Rkdefs},~\eqref
{C1},~\eqref{jumpexpansions}, making use of the expansions \eqref{CFUexpansion} and \eqref{sexpansion}.
It is helpful to note that
the symmetry~\eqref{eqA1A-1symm} between $\bA_1$ and $\bA_{-1}$
implies that
%
\begin{equation}
\label{eqCKsymm1} %
\CC_1 = T\CC_{-1}^{(\gamma\mapsto\tg)}T,\qquad
C_1 = TC_{-1}^{(\gamma\mapsto\tg)}T, %
\end{equation}
where $\CC_{-1}^{(\gamma\mapsto\tg)}$ and $C_{-1}^{(\gamma\mapsto
\tg
)}$ denote $\CC_{-1}$ and $C_{-1}$ with $\gamma$ replaced by $\tg$,
respectively, and $T$ is the operator defined by
%
\begin{equation}
\label{eqCKsymm2} %
Tf(z):= \sigma_1
\sigma_3^n f(-z) \sigma_3^n
\sigma_1. %
\end{equation}
In particular, note that $TI=I$, $R^{(k)}_1= TR_{-1}^{(k)}|_{\gamma\to
\tg}$.

Let $\mathrm{Err}$ and $\tilde \mathrm{Err}$ denote any terms satisfying
%
\begin{equation}
\label{eqhmm1} %
\mathrm{Err}=\mathcal{O} \bigl( e^{-c_0|s(\gamma(\tau))|^{3/2}} \bigr),\qquad
\tilde{\mathrm{Err}}= \mathcal{O} \bigl( e^{-c_0|s(\tg(\tau))|^{3/2}} \bigr) . %
\end{equation}
Denoting by $[A,B]$ and $\{A,B\}$ the commutator and anti-commutator of
matrices $A$ and $B$, respectively,
we find from an explicit evaluation that [making use of~\eqref{eqpsiallsmall}]
\begin{subequations}\label{pureR0terms}
%
\begin{eqnarray}
R^{(1)}_{-1} &=& 2i \biggl(1- \frac1{30} (
\gamma-1) \biggr) \varphi_1(s) - \frac{(2i)^3}{20 t^{2/3}}
\varphi_3(s)
\nonumber\\
&&{} + \bigl(|\gamma-1|^2+t^{-2/3}|\gamma-1|+t^{-1}
\bigr)\mathrm{Err} ,
\nonumber
\\[-8pt]
\\[-8pt]
\nonumber
R^{(1)}_{1} &=& - 2i \biggl(1- \frac1{30} (\tg-1) \biggr)
\phi_1(\ts) + \frac{(2i)^3}{20 t^{2/3}} \phi_3(\ts)
\\
&&{} + \bigl(|\tilde\gamma-1|^2+t^{-2/3}|\tilde
\gamma-1|+t^{-1} \bigr)\tilde{\mathrm{Err}} ,\nonumber
\end{eqnarray}
%
\begin{eqnarray}
R^{(2)}_{-1} &=& \frac{(2i)^2}{2}
\varphi_1(s)^2 - \frac{(2i)^3 \varphi_1(s) \varphi_2(s)}{20 t^{1/3}
} +
\frac{(2i)^3 \varphi_2(s) \varphi_1(s)}{10 t^{1/3} }
\nonumber\\
&&{} + \bigl(|\gamma-1| + t^{-2/3} \bigr)\mathrm{Err} ,
\nonumber
\\[-8pt]
\\[-8pt]
\nonumber
R^{(2)}_{1} &=& \frac{(2i)^2}{2} \phi_1(
\ts)^2 + \frac{(2i)^3 \sigma_1(\ts) \phi_2(\ts)}{20 t^{1/3} } - \frac
{(2i)^3 \phi_2(\ts) \varphi_1(\ts)}{10 t^{1/3} }
\\
&&{} + \bigl(|\tilde\gamma-1| + t^{-2/3} \bigr)\tilde{\mathrm{Err}}
,\nonumber
\end{eqnarray}
%
\begin{eqnarray}
R^{(3)}_{-1} &=& \frac{3(2i)^3}{20}
\varphi_1(s)^3 + \bigl(|\gamma-1| + t^{-1/3}
\bigr) \mathrm{Err},
\nonumber
\\[-8pt]
\\[-8pt]
\nonumber
R^{(3)}_{1} &=& -\frac{3(2i)^3}{20} \phi_1(
\ts)^3 + \bigl(|\tilde\gamma-1| + t^{-1/3} \bigr)\tilde{\mathrm{Err}} ,
\end{eqnarray}
\end{subequations}

\begin{subequations}\label{mixedR0terms}
%
\begin{eqnarray}
R^{(2)}_{X} &=& -\frac{(2i)^2}{2} \bigl\{
\varphi_1(s), \phi_1(\ts) \bigr\} \nonumber\\
&&{}-
\frac{(2i)^3}{4} \bigl( \bigl[\varphi_2(s) ,\phi_1(
\ts) \bigr] + \bigl[\varphi_1(s),\phi_2(\ts) \bigr]
\bigr) t^{-1/3}
\\
&&{}+ \bigl(|\gamma-1| + t^{-2/3} \bigr)\mathrm{Err} + \bigl(|\tilde\gamma-1| +
t^{-2/3} \bigr)\tilde{\mathrm{Err}} ,\nonumber
\end{eqnarray}
%
%
\begin{eqnarray}
R^{(3)}_{X} &=& \frac{(2i)^3}{4} \bigl\{
\varphi_1(s) \phi_1(\ts) \bigr\} \bigl(
\phi_1(\ts) - \varphi_1(s) \bigr)
\nonumber
\\[-8pt]
\\[-8pt]
\nonumber
&&{}+ \bigl(|\gamma-1| + t^{-1/3} \bigr)\mathrm{Err} + \bigl(|\tilde\gamma-1| +
t^{-1/3} \bigr)\tilde{\mathrm{Err}} .
\end{eqnarray}
\end{subequations}

Recall that $R_X^{(1)}=0$.
Note that
%
\begin{equation}
\bigl\{\varphi_1(s), \phi_1(\ts) \bigr\} = 2
\bigl(u(s)u(\ts) -(-1)^n q(s)q(\ts) \bigr)I.
\end{equation}
From \eqref{eq122-1} and \eqref{eq122} using \eqref{jumpexpansions}
and \eqref{pureR0terms}--\eqref{mixedR0terms}, we obtain the
following:

%
\begin{prop}\label{propPainRegime}
Set
%
\begin{eqnarray}
\label{eqregime2-3} %
g_1(y, \ty) &:=& \tfrac{1}{2}
\bigl( u'(y)q(\ty) + u(y) q'(\ty) \bigr) ,
\nonumber
\\[-8pt]
\\[-8pt]
\nonumber
g_2(y,\ty) &:= &\tfrac{1}{2} \bigl( q(y) u'(\ty) +
q'(y) u(\ty) \bigr). %
\end{eqnarray}
Let $\pi_{n,m}(z)$ be the orthogonal polynomial given in \eqref{pinrecovery}.
Let $\pi_{n,\infty}(z)$ be the orthogonal polynomial given in \eqref
{continuouspinrecovery}.
There exists $\delta>0$ such that for any fixed $L>0$, if
%
\begin{equation}
\label{eqregime2-1} \qquad 2t-Lt^{1/3}\le n\le2t(1+\delta),\qquad
2t-Lt^{1/3} \le2m-n\le2t(1+\delta),
\end{equation}
then there exists constants $c_0>0$ and $t_0>0$ such that
%
\begin{eqnarray}
\label{eqregime2-2} %
&&\pi_{n,m}(0;t)\nonumber\\
&&\qquad =
\pi_{n,\infty}(0;t) - (-1)^n \pi_{2m-n,\infty}(0;t)
\nonumber
\\[-8pt]
\\[-8pt]
\nonumber
&&\qquad\quad{}+ \frac{g_1(s(\gamma),s(\tg)) - (-1)^n g_2(s(\gamma),s(\tg)) }{t}
\\
&&\qquad\quad{}+ \mathcal{O} \bigl( \bigl( t^{-4/3}+t^{-2/3}|
\gamma-1|+t^{-2/3}| \tg-1| \bigr)e^{-c_0(|s(\gamma)|^{3/2}+|s(\tg)|^{3/2})} \bigr) \nonumber%
\end{eqnarray}
for all $t\ge t_0$,
where
%
\begin{equation}
\label{eqregime2-5} \gamma:= \frac{n}{2t}, \qquad\tg:= \frac{2m-n}{2t}
\end{equation}
and $s(u)$ is defined in~\eqref{eqsdef} which satisfies [see~\eqref{sexpansion}]
%
\begin{eqnarray}
\label{sexpansion-1} s(u) = 2t^{2/3}(u-1) - \frac{(2t^{2/3}(u-1) )^2}{60}
t^{-2/3} + \mathcal{O} \bigl( t^{2/3}(u-1)^3 \bigr).
\end{eqnarray}
\end{prop}
%

We also have the following:
%
\begin{prop}\label{propPainRegime2}
For $t\ge t_0$,
%
\begin{eqnarray}
\label{eqregime2-6} %
(-1)^n\pi_{n,\infty}(0;t)& =&
\frac{1}{t^{1/3}} q \bigl(s(\gamma) \bigr) \biggl(1 - \frac{\gamma-1}{30}
\biggr)+ \frac{1}{t} h \bigl(s(\gamma) \bigr)
\nonumber
\\[-8pt]
\\[-8pt]
\nonumber
&&{}+ \mathcal{O} \bigl( \bigl( t^{-4/3}+t^{-2/3}|\gamma-1|
\bigr)e^{-c_0|s(\gamma)|^{3/2}} \bigr), %
\end{eqnarray}
where
%
\begin{equation}
\label{eqh1} 
h(y ):= \tfrac{1}{5} u(y)
q'(y) - \tfrac{1}{5} q^3 - \tfrac{1}{20} y
q(y).
\end{equation}
\end{prop}


\section{\texorpdfstring{Proof of Theorem~\protect\ref{thm3} and Corollary~\protect\ref{cor1}}
{Proof of Theorem 1.1 and Corollary 1.1}}\label{secGjkt}

We now evaluate the asymptotics of $\Prob\{ \Cro_t \leq k, \Nes_t
\leq j \}$
when
%
\begin{equation}
\label{eqA55-6} j= \bigl[t + 2^{-1} xt^{1/3} \bigr],\qquad  k=
\bigl[ t + 2^{-1} x' t^{-1/3} \bigr],
\end{equation}
where $x, x'\in\R$ are fixed, and
$[a]$ denotes the largest integer no larger than $a$.
We define $x_t$ and $x'_t$ by
%
\begin{equation}
\label{eqA55-7} x_t:= \frac{(2j+1)-2t}{t^{1/3}},\qquad  x_t':=
\frac{(2k+1)-2t}{t^{1/3}}
\end{equation}
so that
%
\begin{equation}
\label{eqA55-8} 2j+1= 2t+x_tt^{1/3},\qquad  2k+1=
2t+x_t't^{1/3}.
\end{equation}
Then $x_t= x+O(t^{-1/3})$ and $x_t'=x'+O(t^{-1/3})$.


From Proposition~\ref{prop1}, we have
%
\begin{eqnarray}
\label{eqA55-1}
&&\log\Prob\{ \Cro_t \leq k, \Nes_t \leq j
\}
\nonumber
\\[-8pt]
\\[-8pt]
\nonumber
&&\qquad= \int_0^t \pi_{2j+1,m}(0;
\tau) \,d \tau+ \int_0^t \int
_0^s \QQ_j^m(\tau) \,d
\tau \,ds,
\end{eqnarray}
where
%
\begin{equation}
\label{eqT-1-1} %
\QQ_j^m(\tau) =-
\RR_j^m(\tau)-\SSS_j^m(\tau)+
\RR_j^m(\tau)\SSS_j^m(\tau)
\end{equation}
and
%
\begin{equation}
\label{eqT-1-1-1} %
\qquad
\RR_j^m(\tau) :=
\pi_{2j,m}(0;\tau)\pi_{2j+2,m}(0;\tau), \qquad \SSS_j^m(
\tau) := \bigl|\pi_{2j+1,m}(0; \tau)\bigr|^2. %
\end{equation}

From Proposition~\ref{propexpsmall} [substituting $\tau$ for $t$
in~\eqref{eqregime1-3}], we find that
the above integrals away from the interval $[(1-\varepsilon)t, t]$, for
any fixed $\varepsilon>0$, are exponentially small in $t$,
%
\begin{eqnarray}
\label{eqA55-2} \qquad &&\log\Prob\{ \Cro_t \leq k, \Nes_t \leq j
\}
\nonumber
\\[-8pt]
\\[-8pt]
\nonumber
&&\qquad= \int_{t(1-\varepsilon)}^t
\pi_{2j+1,m}(0; \tau) \,d \tau+ \int_{t(1-\varepsilon)}^t
\int_{t(1-\varepsilon)}^s \QQ_j^m(
\tau) \,d \tau \,ds + O \bigl(e^{-ct} \bigr).
\end{eqnarray}
We can take $\varepsilon>0$ small enough so that Proposition~\ref
{propPainRegime} is applicable to $\pi_{2j+\ell,m}(0;\tau)$ for
$\ell
=0,1,2$ and $\tau\in[(1-\varepsilon)t, t]$.

Now by the same argument, we have
%
\begin{eqnarray}
\label{eqA55-3} \qquad &&\log\Prob\{ \Nes_t \leq j \}
\nonumber
\\[-8pt]
\\[-8pt]
\nonumber
&&\qquad= \int_{t(1-\varepsilon)}^t
\pi_{2j+1,\infty}(0; \tau) \,d \tau+ \int_{t(1-\varepsilon)}^t
\int_{t(1-\varepsilon)}^s \QQ_j^{\infty}(
\tau) \,d \tau \,ds + O \bigl(e^{-ct} \bigr)
\end{eqnarray}
and
%
\begin{eqnarray}
\label{eqA55-4}\qquad
&& \log\Prob\{ \Cro_t \leq k \}
\nonumber
\\[-8pt]
\\[-8pt]
\nonumber
&&\qquad= \int_{t(1-\varepsilon)}^t
\pi_{2k+1,\infty}(0; \tau) \,d \tau+ \int_{t(1-\varepsilon)}^t
\int_{t(1-\varepsilon)}^s \QQ_k^{\infty}(
\tau) \,d \tau \,ds + O \bigl(e^{-ct} \bigr).
\end{eqnarray}
Consider
%
\begin{eqnarray}
\label{eqA55-5} \log\Prob\{ \Cro_t \leq k, \Nes_t \leq j
\} - \log\Prob\{ \Nes_t \leq j \} -\log\Prob\{ \Cro\leq k \} .
\end{eqnarray}

We first consider the three single integrals.
From~\eqref{eqregime2-2} applied to $n=2j+1$ and $t$ replaced by
$\tau$,
we have
%
\begin{eqnarray}
\label{eq140}
&&\int_{t(1-\varepsilon)}^t \bigl[
\pi_{2j+1,m}(0;\tau) - \pi_{2j+1,\infty}(0;\tau) - \pi_{2k+1,\infty}(0;
\tau) \bigr] \,d \tau
\nonumber\\
&&\qquad= \int_{t(1-\varepsilon)}^t \frac{1}{\tau} \bigl[
g_1\bigl(s \bigl(\gamma(\tau)\bigr), s \bigl(\tg(\tau) \bigr) \bigr) +
g_2\bigl(s \bigl(\gamma(\tau) \bigr), s \bigl(\tg(\tau) \bigr)\bigr) \bigr] \,d
\tau
\\
&&\qquad\quad{}+ \mathcal{O} \biggl( \int_{t(1-\varepsilon)}^t \bigl(
\tau^{-4/3}+ \tau^{-2/3}\bigl|\gamma(\tau)-1\bigr| \bigr)e^{-c_0 |s(\gamma(\tau
))|^{3/2}}
\,d\tau \biggr),\nonumber
\end{eqnarray}
where
%
\begin{equation}
\label{eqA55-6-1} \gamma(\tau):= \frac{2j+1}{2\tau}, \qquad \tg (\tau):=
\frac{2k+1}{2\tau}.
\end{equation}
Changing the integration variable $\tau\mapsto\eta$ as
%
\begin{equation}
\label{eqA55-10} \tau= t- 2^{-1} \eta t^{1/3},
\end{equation}
the integral involving $g_1$ in~\eqref{eq140} becomes
%
\begin{eqnarray}
\label{eqA55-11} \frac{1}{2t^{2/3}} \int_0^{2\varepsilon t^{2/3}}
g_1\bigl(s \bigl(\gamma(\tau)\bigr), s \bigl(\tg(\tau) \bigr) \bigr)
\frac{d\eta}{1-2^{-1} \eta t^{-2/3}}.
\end{eqnarray}
Note that from~\eqref{sexpansion},
%
\begin{eqnarray}
\label{eqA55-12} s \bigl(\gamma(\tau) \bigr)&=& (x_t+\eta) +
\mathcal{O} \bigl( \eta^2 t^{-2/3} \bigr),
\nonumber
\\[-8pt]
\\[-8pt]
\nonumber
  s \bigl(\tg (\tau)
\bigr)&= &\bigl(x'_t+\eta \bigr) + \mathcal{O} \bigl(
\eta^2 t^{-2/3} \bigr).
\end{eqnarray}
Also note that from its definition, $g_1(x_0+\eta, x_0'+\eta)$ is
integrable for $\eta\in[0, \infty)$ for any fixed $x_0, x_0'\in\R$.
Thus, we obtain that integral~\eqref{eqA55-11} equals
%
\begin{equation}
\label{eqA55-12} \frac{1}{2t^{2/3}} \int_0^{\infty}
g_1 \bigl(x_t+\eta, x_t'+\eta
\bigr) \,d\eta+ \mathcal{O} \bigl( t^{-4/3} \bigr).
\end{equation}
The integral involving $g_2$ in~\eqref{eq140} equals the same integral
with $g_1$ replaced by $g_2$.
On the other hand, it is easy to see that the error term in~\eqref
{eq140} is
%
\begin{eqnarray}
\label{eqA55-13} \mathcal{O} \biggl( t^{1/3} \int_0^{\infty}
t^{-4/3}\bigl(1 + |x_t+\eta|\bigr) e^{-c_0|x_t+\eta|^{3/2}} \,d\eta \biggr) =
\mathcal{O} \bigl( t^{-1} \bigr).
\end{eqnarray}
Thus, replacing $x_t$ and $x_t'$ by $x$ and $x'$, which incurs an error
of order $\mathcal{O}  ( t^{-1/3}  )$, \eqref{eq140} equals
%
\begin{eqnarray}
\label{eqA55-14} %
\frac{1}{2t^{2/3}} \int_0^{\infty}
\bigl[ g_1 \bigl(x+\eta, x'+\eta \bigr)
+g_2 \bigl(x+\eta, x'+\eta \bigr) \bigr] \,d\eta+
\mathcal{O} \bigl( t^{-1} \bigr) . %
\end{eqnarray}
Now inserting definition~\eqref{eqregime2-3}, we can perform the
integration, and we find that~\eqref{eq140} equals
%
\begin{equation}
\label{eqA55-15} %
\frac{-1}{4t^{2/3}} \bigl[ u(x)q
\bigl(x' \bigr) + q(x)u \bigl(x' \bigr) \bigr] +
\mathcal{O} \bigl( t^{-1} \bigr) . %
\end{equation}

We now consider the part of~\eqref{eqA55-5} that comes from the
three double integrals.
We need to evaluate $\QQ_j^m(\tau) - \QQ_j^\infty(\tau) - \QQ_k^{\infty
}(\tau)$.
Setting
%
\begin{equation}
\label{eqA55-21} %
\gamma^{\pm}(\tau):= \frac{2j+1\pm1}{2\tau}
= \gamma(\tau)\pm\frac{1}{2\tau}, %
\end{equation}
we see from~\eqref{sexpansion} that
%
\begin{equation}
\label{eqA55-22} %
s \bigl(\gamma^{\pm}(\tau) \bigr)= s
\bigl(\gamma(\tau) \bigr) \pm\frac{1}{\tau^{1/3}} + O \bigl(t^{-1/3} \bigl(
\gamma(\tau)-1 \bigr) \bigr). %
\end{equation}
Let us set
%
\begin{equation}
\label{eqA55-22-1} %
\xi:= s \bigl(\gamma(\tau) \bigr),\qquad \tilde{
\xi}:= s \bigl(\tg(\tau) \bigr) %
\end{equation}
to ease the notational burden.
Then,~\eqref{eqregime2-6} implies, using~\eqref{eqpsiallsmall}, that
%
\begin{eqnarray}
\label{eqA55-23} %
\pi_{2j+1\pm1, \infty}(0;\tau) &=& -
\pi_{2j+1, \infty}(0; \tau) \pm q'(\xi) \frac{1}{\tau^{2/3}} +
\frac12 q''(\xi) \frac{1}{\tau}
\nonumber
\\[-8pt]
\\[-8pt]
\nonumber
&&{}+ \tau^{-4/3} \mathrm{Error} , %
\end{eqnarray}
where throughout the rest of this section we use the notation $\mathrm{Error}$
to denote any term satisfying
%
\begin{eqnarray}
\label{eqA55-23-1} %
\mathrm{Error}&=&\mathcal{O} \bigl( \bigl( 1+
\tau^{2/3}\bigl| \gamma(\tau)-1\bigr| \bigr)e^{-c_0|s(\gamma(\tau))|^{3/2}} \bigr)
\nonumber
\\[-8pt]
\\[-8pt]
\nonumber
&&{} + \mathcal{O} \bigl( \bigl( 1+\tau^{2/3}\bigl|\tg(\tau)-1\bigr|
\bigr)e^{-c_0|s(\tg (\tau))|^{3/2}} \bigr) . %
\end{eqnarray}
Note that
%
\begin{equation}
\label{eqA55-23-1-1} %
\int_{t(1-\varepsilon)}^t
\int_{t(1-\varepsilon)}^t \mathrm{Error} \,d\tau \,ds = O
\bigl(t^{2/3} \bigr). %
\end{equation}
%
Also, note that from~\eqref{eqregime2-6}, \eqref{eqA55-23}
implies, in particular, that
%
\begin{equation}
\label{eqA55-23-2} %
\pi_{2j+1\pm1, \infty}(0;\tau) = q(\xi)+
\tau^{-2/3} \mathrm{Error}, %
\end{equation}
and clearly asymptotics~\eqref{eqA55-23} and~\eqref{eqA55-23-2}
also hold when $j$ is replaced by $k$ and $\xi$ is replaced by $\tilde
\xi$.

From~\eqref{eqregime2-2},
%
\begin{eqnarray}
\label{eqA55-31} %
&&\bigl|\pi_{2j+1, m}(0;
\tau)\bigr|^2 - \bigl|\pi_{2j+1, \infty}(0;\tau)\bigr|^2 - \bigl|
\pi_{2k+1, \infty}(0;\tau)\bigr|^2
\nonumber\\
&&\qquad= 2 \pi_{2j+1, \infty}(0;\tau)\pi_{2k+1, \infty}(0;\tau)
\nonumber
\\[-8pt]
\\[-8pt]
\nonumber
&&\qquad\quad{} + \frac2{\tau} \bigl[ g_1(\xi, \tilde{\xi}) + g_2(
\xi, \tilde{\xi}) \bigr] \bigl[ \pi_{2j+1, \infty}(0;\tau)+ \pi_{2k+1, \infty}(0;
\tau) \bigr]
\\
&&\qquad\quad{} + \tau^{-5/3} \mathrm{Error} .\nonumber %
\end{eqnarray}
Thus, from~\eqref{eqregime2-6},
%
\begin{eqnarray}
\label{eqA55-31-1} %
&&\SSS_j^m(\tau)-
\SSS_j^\infty(\tau)- \SSS_k^\infty(\tau)\nonumber
\\
&&\qquad= 2 \pi_{2j+1, \infty}(0;\tau)\pi_{2k+1, \infty}(0;\tau)
\\
&&\qquad\quad{} - \frac{2}{\tau^{4/3}} \bigl[ g_1(\xi, \tilde{\xi}) +
g_2(\xi, \tilde{\xi}) \bigr] \bigl[ q(\xi) + q(\tilde{\xi}) \bigr] +
\tau^{-5/3} \mathrm{Error}.\nonumber %
\end{eqnarray}
Similarly, using~\eqref{eqregime2-2} and~\eqref{eqA55-23-2}, we obtain
%
\begin{eqnarray}
\label{eqA55-30-1} %
&&\RR_j^m(\tau)-
\RR_j^\infty(\tau)-\RR_k^\infty(\tau)
\nonumber\\
&&\qquad= - \pi_{2j, \infty}(0;
\tau)\pi_{2k, \infty}(0;\tau) - \pi_{2j+2,
\infty}(0; \tau)
\pi_{2k+2, \infty}(0;\tau)
\\
&&\qquad\quad + \frac{2}{\tau^{4/3}} \bigl[ g_1(\xi, \tilde{\xi}) -
g_2(\xi, \tilde{\xi}) \bigr] \bigl[ q(\xi) - q(\tilde{\xi}) \bigr] +
\tau^{-5/3}\mathrm{Error}\nonumber %
\end{eqnarray}
and
%
\begin{eqnarray}
\label{eqA55-32} %
&&\RR_j^m(\tau)
\SSS_j^m(\tau)-\RR_j^\infty(\tau)
\SSS_j^\infty(\tau) - \RR_k^\infty(\tau)
\SSS_k^\infty(\tau)
\nonumber
\\[-8pt]
\\[-8pt]
\nonumber
&&\qquad = \frac{-2}{\tau^{4/3}} q(
\xi) q(\tilde{\xi}) + \tau^{-5/3} \mathrm{Error}. %
\end{eqnarray}
Therefore,
since
%
\begin{eqnarray}
\label{eqA55-33} %
&&\pi_{2j, \infty}(0; \tau)
\pi_{2k, \infty}(0; \tau) + \pi_{2j+2, \infty}(0; \tau) \pi_{2k+2,
\infty}(0;
\tau)
\nonumber\\
&&\quad{}- 2\pi_{2j+1, \infty}(0; \tau) \pi_{2k+1, \infty}(0; \tau)
\\
&&\qquad= \frac1{\tau^{4/3}} \bigl[ q(\xi)q''(
\tilde\xi) + q''(\xi)q(\tilde\xi)+2q'(
\xi)q'(\tilde\xi) \bigr] + \tau^{-5/3}\mathrm{Error},\nonumber %
\end{eqnarray}
we obtain, by using the definition of $g_1, g_2$ and by using the fact
that $q^2=u'$ and $2qq'=u''$, that
%
\begin{equation}
\label{eqA55-34} %
\QQ_j^m (\tau) -
\QQ_j^\infty(\tau) - \QQ_k^\infty(\tau)
= \frac{1}{\tau^{4/3}} \UU(\xi, \tilde{\xi}) + \tau^{-5/3} \mathrm{Error},
\end{equation}
where $\xi:= s(\gamma(\tau))$, $\tilde{\xi}:= s(\tg(\tau))$ are defined
in~\eqref{eqA55-22-1},
and we have set
%
\begin{eqnarray}
\label{eqA55-35} %
\UU(\xi, \tilde{\xi})& := & u''(
\xi) u(\tilde\xi) + 2u'(\xi) u'(\tilde\xi) +u(
\xi)u''(\tilde\xi)
\nonumber
\\[-8pt]
\\[-8pt]
\nonumber
&&{}+ q''(\xi) q(\tilde\xi) + 2 q'(\xi)
q'(\tilde\xi) + q(\xi) q''(\tilde \xi).
\end{eqnarray}

We insert~\eqref{eqA55-34} into the integral
%
\begin{equation}
\label{eqA55-40} %
\int_{t(1-\varepsilon)}^t \int
_{t(1-\varepsilon)}^t \bigl[ \QQ_j^m (
\tau) - \QQ_j^\infty(\tau) - \QQ_k^\infty(
\tau) \bigr] \,d\tau \,ds, %
\end{equation}
and evaluate it by changing variables $\tau\mapsto\eta$, $\tau=
t-2^{-1} \eta t^{1/3}$ and
$s\mapsto\zeta$, $s= t - 2^{-1} \zeta t^{1/3}$, as was done for the
single ingtegrals.
Noting that
%
\begin{eqnarray}
\label{eqA55-36} %
\UU(\xi+\eta, \tilde{\xi}+\eta) :=
\frac{d^2}{d\eta^2} \bigl[ u(\xi+\eta) u(\tilde\xi+\eta) +q(\xi +\eta) q(\tilde
\xi+\eta) \bigr], %
\end{eqnarray}
the integral can be evaluated, and we find that~\eqref{eqA55-40} equals
%
\begin{equation}
\label{eqA55-41} %
\frac1{4t^{2/3}} \bigl[ u(x) u
\bigl(x' \bigr) +q(x) q \bigl(x' \bigr) \bigr] +
\mathcal{O} \bigl( t^{-1} \bigr). %
\end{equation}
The error term $\mathcal{O}  ( t^{-1}  )$ follows
from~\eqref{eqA55-23-1-1}.

Combining~\eqref{eqA55-15} and~\eqref{eqA55-41}, we obtain
%
\begin{eqnarray}
&&\log \biggl[ \frac{ \Prob\{\tilde\Cro_t \leq x, \tilde\Nes_t
\leq
x'\} } {
\Prob\{\tilde\Cro_t \leq x\} \Prob\{ \tilde\Nes_t \leq x'\} } \biggr]
\nonumber
\\[-8pt]
\\[-8pt]
\nonumber
&&\qquad = \frac{ [ q(x) - u(x) ] [ q(x') - u(x') ]}{4t^{2/3}} + \mathcal{O}
\bigl( t^{-1} \bigr).
\end{eqnarray}
This completes the proof of Theorem~\ref{thm3}.
We note that here the error term is uniform for $x, x'$ in a compact
subset of $\R$
(actually in any semi-infinite interval $[x_0, \infty)$.)

Corollary~\ref{cor1} follows if we show that $\Cov(\tilde\Cro_t,
\tilde\Nes_t) = t^{-2/3}+\mathcal{O}  ( t^{-1}  )$.
This is obtained from
Theorem~\ref{thm3} by using the dominated convergence theorem
if we have tail estimates of $\Prob\{ \tilde\Cro_t \leq x, \tilde
\Nes_t \leq x' \}
-\Prob\{ \tilde\Cro_t < x \} \Prob\{ \tilde\Nes_t < x' \}$ as $|x|,
|x'|\to\infty$ since $\int_{-\infty}^\infty x\, dF'(x)=-1$.
The tail as $x, x'\to+\infty$ can be obtained from the analysis of
this paper.
For the other limits, we need an extension of the analysis of this
paper, but we skip the details in this paper.
See \cite{BDJ,BDR} for a similar question about the convergence of
moments using Toeplitz determinant.

%
%
\section{\texorpdfstring{Proof of Theorems~\protect\ref{thm4} and \protect\ref{thm5}}
{Proof of Theorems 1.2 and 1.3}}\label{secmarginalproofs}

Here we evaluate the asymptotics of the marginal distributions $\Prob
\{ \Cro_t \leq j \}$ for $j$ as given by \eqref{eqA55-6}. We reuse
as much as possible the calculations in the previous section. Note that
by symmetry we have $\Prob\{ \Nes_t \leq j \} = \Prob\{ \Cro_t \leq j
\}$. In the process of computing the marginal we will compute as a
by-product asymptotics for $\Prob\{ L_t \le\ell\}$ along the way.

Our starting point is to introduce the change of variables
%
\begin{equation}
\label{eq81} \tau= t - 2^{-1} (\eta- x_t) t^{1/3},\qquad
s = t - 2^{-1} (\zeta- x_t) t^{1/3}
\end{equation}
into \eqref{eqA55-3} where, as in the previous section, $x_t$ is
given by \eqref{eqA55-8}. Note that this change of variables differs
from \eqref{eqA55-10} by a shift. Making the substitution we have,
with $j$ and $k$ defined by~\eqref{eqA55-6} [recall~\eqref{eqA55-8}],
%
\begin{eqnarray}
\label{eq82} %
\log\Prob\{ \Nes_t \leq j \} &=&
\Ical_1 + \Ical_2 + \mathcal{O} \bigl( e^{-ct}
\bigr),
\nonumber
\\[-8pt]
\\[-8pt]
\nonumber
\log\Prob\{ L_t \leq2j+1 \} &=& 2 \Ical_1 + \mathcal{O}
\bigl( e^{-ct} \bigr), %
\end{eqnarray}
where
%
\begin{eqnarray}
\label{eq83} %
\Ical_1 &=& \frac{t^{2/3}}{4} \int
_{x_t}^{x_t+ 2\eps t^{2/3}} \int_{\zeta}^{x_t+2\eps t^{2/3}}
\QQ_j^{\infty}(\tau) \,d \eta \,d\zeta,
\nonumber
\\[-8pt]
\\[-8pt]
\nonumber
\Ical_2 &=& \frac{t^{1/3}}{2} \int_{x_t}^{x_t+2\eps t^{2/3}}
\pi_{2j+1,\infty}(0;\tau) \,d \eta. %
\end{eqnarray}
From~\eqref{eqellt-1}, there is an analogous formula for $\log\Prob
\{ L_t \leq2j \}$, and the analysis below applies to this case too
without many changes. We skip the details for this case.

In order to compute expansions of the above integrals, we need more
detailed calculations than the previous section.
Inserting \eqref{eq81} into \eqref{eqA55-6-1}, we have
%
\begin{equation}
\label{eq84-1} \gamma(\tau) = 1+ \tfrac{1}{2} \eta t^{-2/3} +
\tfrac{1}{4} \bigl(\eta^2 -\eta x_t \bigr)
t^{-4/3} + \mathcal{O} \bigl( \eta^3 t^{-2} \bigr).
\end{equation}
Then \eqref{sexpansion-1}, with $t$ replaced by $\tau$, becomes 
%
\begin{equation}
\label{eq84} s \bigl(\gamma(\tau) \bigr) = \eta+ \bigl( \tfrac{3}{20}
\eta^2 -\tfrac{1}{6} \eta x_t \bigr)
t^{-2/3} + \mathcal{O} \bigl( \eta^3 t^{-4/3} \bigr).
\end{equation}
Inserting these into \eqref{eqregime2-6} we have
%
\begin{eqnarray}
\label{eq85}
&& -\pi_{2j+1, \infty}(0;\tau)
\nonumber
\\
&&\qquad= \frac{1}{t^{1/3}} q(\eta) + \frac{1}{t} \biggl[ h(\eta) + \biggl(
\frac{3}{20}\eta-\frac{1}{6}x_t \biggr) \bigl(q(\eta) +
\eta q'(\eta) \bigr) \biggr]\\
&&\qquad\quad{}+ \mathcal{O} \bigl( t^{-4/3}\mathrm{Error}
\bigr),\nonumber
\end{eqnarray}
and it follows from \eqref{eqT-1-1}, \eqref{eqT-1-1-1} (when
$m=\infty$),
and (a slight improvement of) \eqref{eqA55-23} that %
%
\begin{eqnarray}
\label{eq86}
&&\QQ_j^\infty(\tau) \nonumber\\
&&\qquad= -2t^{-2/3} q(
\eta)^2
\nonumber
\\
&&\quad\qquad{}- t^{-4/3} \bigl[ 4 q(\eta) h(\eta) + \bigl( \tfrac{3}{5} \eta
- \tfrac{2}{3} x_t \bigr) \bigl( \eta q'(\eta)
q(\eta) + q(\eta)^2 \bigr)
+ q(\eta)q''(\eta)
\\
&&\hspace*{256pt}{} - q'(
\eta)^2 - q(\eta)^4 \bigr] \nonumber\\
&&\qquad\quad{}+ \mathcal{O} \bigl(
t^{-5/3} \mathrm{Error} \bigr).\nonumber
\end{eqnarray}
Here $h$ is as given in~\eqref{eqh1}.
In both the above formulas the $\mathrm{Error}$ term is as defined in \eqref
{eqA55-23-1}, and we recall that its integral introduces terms of
order $\mathcal{O}  ( t^{2/3}  )$.
Now using the identity $q^4 = u +  (q'  )^2 - \eta q^2$ and using
the fact that $q^2 = u'$, $2qq' = u''$ and $q'' = \eta q+2q^3$, it is
direct to check that the terms in square brackets in~\eqref{eq85}
and~\eqref{eq86} can be expressed as perfect derivatives. We find
that
%
\begin{eqnarray}
\label{eq87001} %
-\pi_{2j+1, \infty}(0;\tau) &=& \frac{1}{t^{1/3}}
q(\eta) + \frac{1}{t} \U_1(\eta) + \mathcal{O} \bigl(
t^{-4/3}\mathrm{Error} \bigr),
\nonumber
\\[-8pt]
\\[-8pt]
\nonumber
\QQ_j^\infty(\tau) &=& - \frac{2}{t^{2/3}}
u'(\eta) - \frac{1}{t^{4/3} }\U_2(\eta) + \mathcal{O}
\bigl( t^{-5/3} \mathrm{Error} \bigr), %
\end{eqnarray}
where
%
\begin{eqnarray}
\label{eq87-1} %
\U_1(\eta)& :=& \frac{1}{5}
\frac{\mathrmm{d}^{ } }{\mathrmm{d} \eta^{ } } \biggl[ u(\eta )q(\eta) - q'(\eta) +
\frac{1}{12} (9 \eta- 10 x_t )\eta q(\eta) \biggr],
\nonumber
\\[-8pt]
\\[-8pt]
\nonumber
\U_2(\eta)&:=& \frac{1}{5}
\frac{\mathrmm{d}^{2} }{\mathrmm{d} \eta^{2} } \biggl[ u(\eta)^2 - q(\eta)^2 +
\frac{1}{6} (9 \eta- 10 x_t )\eta u(\eta) \biggr]. %
\end{eqnarray}
%
Inserting this formula into~\eqref{eq83} and~\eqref{eq82}, we
obtain with $x^{(t)}$ and $x_t$ defined by~\eqref{eqtm5-2-1}
and~\eqref{eqm-1}, respectively,
%
\begin{eqnarray}
\label{eqthm5-2}
&&\log\Prob \bigl\{ L_t \le2t + t^{1/3} x
\bigr\} \nonumber\\
&&\qquad= \log\FGUE \bigl(x^{(t)} \bigr)
\\
&&\qquad\quad{}- \frac{1}{10t^{2/3}} \biggl[ u(x)^2 - q(x)^2 -
\frac{1}{6} x^2 u(x) \biggr] +\mathcal{O} \bigl(
t^{-1} \bigr)\nonumber
\end{eqnarray}
and
%
\begin{equation}
\log\Prob \bigl\{ \Nes_t
\leq t + 2^{-1}t^{1/3}x \bigr\} = \log\FGOE(x_t) +
\frac{ \Next(x)}{t^{2/3}} + \mathcal{O} \bigl( t^{-1} \bigr), %
\end{equation}
where $\Next=\Next(x)$ equals
%
\begin{eqnarray}
\label{eq12} \qquad\Next:= \tfrac{1}{20} \bigl[ - \bigl(u(x)-q(x)
\bigr)^2 +2 \bigl(u'(x) - q'(x) \bigr)+
\tfrac{1}{6} x^2 \bigl(u(x) - q(x) \bigr) \bigr] .
\end{eqnarray}
It is easy to check that $20\Next(x)F(x) = - 4F''(x)-\frac13 x^2
F'(x)$ and
$(u(x)^2 - q(x)^2 - \frac{1}{6} x^2 u(x))\FGUE(x)= \FGUE''(x)+ \frac16
x^2\FGUE'(x)$.\footnote{We would like to thank Craig Tracy for pointing
out these relations. Relations like these and many others can be found
in \cite{STracy}.}
Theorems~\ref{thm4} and~\ref{thm5} follow immediately.

\section{\texorpdfstring{Proof of Corollary~\protect\ref{thm1}}{Proof of Corollary 1.2}}\label{secde-Poissonization}


For a sequence $\{ a_n\}_{n=0}^\infty$, consider its Poissonization
%
\begin{equation}
\phi(t) := e^{-t^2} \sum_{n=0}^\infty
\frac{ (t^2)^n}{n!} a_n.
\end{equation}
A de-Poissonization lemma is that if (a) $0\le a_n \leq1$ and (b)
$a_{n+1} \leq a_n$ for all $n$, then we have for $s \geq1$ and $n \geq2$,
%
\begin{equation}
\phi(\sqrt{\mu_n}) - \frac{1}{n^s} \leq a_n \leq
\phi(\sqrt{\nu_n}) + \frac{1}{n^s},
\end{equation}
where
%
\begin{equation}
\label{dePoissonmu} \mu_n := n + 2\sqrt{sn\log n},\qquad
\nu_n = n - 2\sqrt{sn \log n}.
\end{equation}
%
Lemma 2.5 of \cite{10} is stated for the case when $s=2$, but the proof
can be modified in a straightforward way to obtain the above estimates.

The de-Poissonization lemma can be applied to
$a_n:= \Prob\{ \mathrm{cr}_n \leq k, \mathrm{ne}_n \leq j \}$ due to
the following lemma.

%
\begin{lemma}
For each $n \geq0$, and $k,j\ge0$,
%
\begin{equation}
\Prob\{
\cro_{n+1} \leq k, \mathrm{ne}_{n+1} \leq j \} \leq
\Prob\{
\cro_{n} \leq k, \mathrm{ne}_{n} \leq j \}.
\end{equation}
\end{lemma}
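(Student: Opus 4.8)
The plan is to deduce the claimed inequality from a counting inequality for the numbers $g_{k,j}(n) = \#\{M\in\M_n : \cro_n(M)\leq k,\ \nes_n(M)\leq j\}$ of~\eqref{eq:7}. Since $|\M_{n+1}| = (2n+1)!! = (2n+1)\,(2n-1)!! = (2n+1)\,|\M_n|$, the inequality $\Prob\{\cro_{n+1}\leq k, \nes_{n+1}\leq j\}\leq \Prob\{\cro_{n}\leq k, \nes_{n}\leq j\}$ is equivalent to
\begin{equation*}
	g_{k,j}(n+1)\ \leq\ (2n+1)\,g_{k,j}(n).
\end{equation*}

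To establish this I would introduce the \emph{delete-the-last-pair} map $\psi\colon \M_{n+1}\to\M_n$: given $M\in\M_{n+1}$, let $i$ be the point matched to the largest point $2n+2$, delete both $i$ and $2n+2$, and relabel the remaining $2n$ points by the unique order-preserving bijection onto $[2n]$. Two properties of $\psi$ are needed. First, $\psi$ is exactly $(2n+1)$-to-one: a preimage of a given $M'\in\M_n$ is reconstructed by choosing one of the $2n+1$ slots $i\in\{1,\dots,2n+1\}$ into which to re-insert the partner of a new maximal point (relabeling the points $\geq i$ upward by one and appending a new point $2n+2$ matched to $i$); distinct $i$ give distinct matchings and every preimage arises this way, so $|\psi^{-1}(M')| = 2n+1$ for all $M'$. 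Second, $\psi$ respects the constraint: if $\cro_{n+1}(M)\leq k$ and $\nes_{n+1}(M)\leq j$ then $\cro_n(\psi(M))\leq k$ and $\nes_n(\psi(M))\leq j$. This is exactly the statement that deleting an edge and relabeling the remaining points order-preservingly cannot increase the maximal crossing or the maximal nesting: any $r$-crossing (resp. $r$-nesting) of $\psi(M)$ is a family of edges of $\psi(M)$ whose endpoints satisfy the defining chain of inequalities, and pulling these endpoints back through the order isomorphism produces an $r$-crossing (resp. $r$-nesting) of $M$ built from a subset of the edges of $M$.

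Writing $A_n := \{M\in\M_n : \cro_n(M)\leq k,\ \nes_n(M)\leq j\}$, the second property gives $\psi(A_{n+1})\subseteq A_n$, hence $A_{n+1}\subseteq\psi^{-1}(A_n)$, and the first property then yields $g_{k,j}(n+1) = |A_{n+1}|\leq|\psi^{-1}(A_n)| = (2n+1)|A_n| = (2n+1)\,g_{k,j}(n)$. Dividing by $|\M_{n+1}| = (2n+1)|\M_n|$ gives $\Prob\{\cro_{n+1}\leq k,\nes_{n+1}\leq j\}\leq\Prob\{\cro_{n}\leq k,\nes_{n}\leq j\}$, as desired.

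The only step needing genuine care — and the nearest thing to an obstacle — is checking, simultaneously for both statistics, that edge deletion followed by order-preserving relabeling is monotone. This is elementary once one observes that the relabeling is an order isomorphism of the underlying point sets, so it transports $r$-crossings to $r$-crossings and $r$-nestings to $r$-nestings verbatim, and a sub-matching with fewer edges can only support shorter such configurations. (One could instead run the argument through the bijection $\varphi$ with fixed-point-free involutions of~\cite{5}, deleting the $2$-cycle through the largest symbol and invoking monotonicity of longest monotone subsequence lengths under restriction, but the direct matching argument above is cleaner.)
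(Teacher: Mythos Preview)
Your proof is correct and follows essentially the same approach as the paper: reduce to the counting inequality $g_{k,j}(n+1)\le (2n+1)g_{k,j}(n)$, then exhibit a $(2n+1)$-to-one edge-deletion map $\M_{n+1}\to\M_n$ that can only decrease $\cro$ and $\nes$. The only cosmetic difference is that the paper deletes the pair containing the vertex $1$ rather than the vertex $2n+2$, which by the obvious symmetry is the same argument.
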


\begin{pf}
Since $\Prob\{ \mathrm{cr}_{n} \leq k, \mathrm{ne}_{n} \leq j \} =
\frac
{g_{k,j}(n)}{(2n-1)!!}$, where
%
\begin{eqnarray}
g_{k,j}(n) := \# \bigl\{ M \in\M_n \dvtx
\cro_n(M) \leq k, \mathrm{ne}_n(M) \leq j \bigr\},
\end{eqnarray}
we need to show that $g_{k,j}(n+1) \leq(2n+1)g_{k,j}(n)$.
The set $\M_{n+1}$ of complete matchings of $[2(n+1)]$ is the union of $(2n+1)$
disjoint subsets $\M_{n+1}^\ell, \ell= 1,\ldots,2n+1$, where
$M_{n+1}^\ell$ is the set of complete matchings of $[2(n+1)]$ such that
$1$ is paired with $\ell$ [i.e., $(1,\ell)$ is an element of the
matching]. By removing the two vertices $1$ and $\ell$, and then
relabeling the vertices, there is a trivial bijection $f_\ell: \M_{n+1}^\ell\mapsto\M_n$. 
Clearly, $\mathrm{cr}_{n+1}(M)\ge\mathrm{cr}_n(f_\ell(M))$ and
$\mathrm{ne}_{n+1}(M)\ge\mathrm{ne}_n (f_\ell(M))$ for $M\in\M_{n+1}^\ell$.
This implies that $g_{k,j}(n+1) \leq(2n+1) g_{k,j}(n)$.
\end{pf}

Hence, since [see~\eqref{eq8}]
%
\begin{eqnarray}
\Prob\{ \Cro_t \leq k, \Nes_t \leq j \} &=&
e^{-t^2/2} \sum_{n=0}^\infty
\frac{(t^2/2)^n}{n!} \Prob\{
\cro_n \leq k, \mathrm{ne}_n
\leq j \},
\nonumber
\\[-8pt]
\\[-8pt]
\nonumber
\Prob\{ \Cro_t \leq k \} &=& e^{-t^2/2} \sum
_{n=0}^\infty\frac{(t^2/2)^n}{n!} \Prob\{
\cro_n \leq k \}, 
\end{eqnarray}
we find that
for each $s \geq1$, $n \geq2$ and $j,k\ge0$,
%
\begin{eqnarray}
\label{eqqu1} %
&&\Prob\{
\cro_n \leq k,
\mathrm{ne}_n \leq j \} - \Prob\{
\cro_n \leq k \}
\Prob\{ \mathrm{ne}_n \leq j \}
\nonumber\\
&&\qquad\le\Prob\{ \Cro_{\sqrt{2\nu_n}} \leq k, \Nes_{\sqrt{2\nu_n}} \leq j \}\\
&&\qquad\quad{} - \Prob
\{ \Cro_{\sqrt{2\mu_n}} \leq k \} \Prob\{ \Nes_{\sqrt{2\mu_n}} \leq j \} +
4n^{-s}. \nonumber %
\end{eqnarray}
When $k=\sqrt{2n}+2^{-1} x(2n)^{1/6}$ and $j=\sqrt{2n}+2^{-1}
x'(2n)^{1/6}$, from Theorem~\ref{thm3}, the right-hand side of~\eqref
{eqqu1} is less than or equal to
%
\begin{eqnarray}
\label{eqqu2} \quad %
&&\Prob\{ \Cro_{\sqrt{2\nu_n}} \leq k \} \Prob\{
\Nes_{\sqrt{2\nu_n}} \leq j \} - \Prob\{ \Cro_{\sqrt{2\mu_n}} \leq k \} \Prob\{
\Nes_{\sqrt{2\mu_n}} \leq j \}
\nonumber
\\[-8pt]
\\[-8pt]
\nonumber
&&\qquad{}+ 4n^{-s} + \mathcal{O} \bigl( n^{-1/3} \bigr). %
\end{eqnarray}
Now we use Theorem~\ref{thm4} to estimate each of the above probabilities.
Note that
%
\begin{eqnarray}
\label{eqqu3} %
\frac{\sqrt{2n}+2^{-1}x(2n)^{1/6}- \sqrt{2\nu_n}}{2^{-1}(2\nu_n)^{1/6}} = x + \frac{4\sqrt{sn\log n}}{(2n)^{1/6}} +
\mathcal{O} \biggl( \frac{\sqrt{\log n}}{n^{1/2}} \biggr). %
\end{eqnarray}
When $\nu_n$ is replaced by $\mu_n$, then the first plus sign on the
right-hand side is changed to the minus sign.
From this, it follows that~\eqref{eqqu2} is bounded above by
$\mathcal{O}  ( \frac{\sqrt{\log n}}{n^{1/6}}  )+4n^{-s}$.
The lower bound is similar. Thus we obtain Corollary~\ref{thm1}.

\section{A model RHP: Painlev\'e II}\label{secpainleve}
Consider the coupled pair of differential equations for $2\times2$
matrix $\Psi(\zeta, s)$,
\begin{subequations}\label{PIIlaxpair}
%
\begin{eqnarray}
i \frac{\mathrmm{d}^{ } \Psi}{\mathrmm{d} \zeta^{ } } &=& \bigl(4\zeta^2+s \bigr) [
\sigma_3, \Psi] + \pmatrix{ 2q^2 & 4i\zeta q - 2r
\cr
4i
\zeta q +2r & -2q^2 } \Psi, \label{PIIlax1}
\\
i \frac{\mathrmm{d}^{ } \Psi}{\mathrmm{d} s^{ } } &=& - \zeta[ \sigma_3, \Psi] + \offdiag{iq}
{iq} \Psi, \label{PIIlax2}
\end{eqnarray}
\end{subequations}
where $\sigma_3$ denotes the Pauli matrix $ \bigl({1\atop 0}\enskip {0 \atop -1}\bigr)$ and $[*,*]$ is
the commutator $[A,B] =AB-BA$. The compatibility condition for this
overdetermined system is that $q =q(s)$ satisfy Painlev\'e II $q'' =
sq+2q^3$ and $r =q'(s)$.
This is a representation of the Lax-pair for Painlev\'e II equation
introduced by Flaschka and Newell~\cite{FN76}.

%
\begin{figure}

\includegraphics{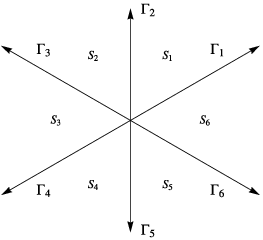}

\caption{The contours $\Gamma_j$ and regions $S_j$ defining $\Psi
(\zeta
,s)$.}\label{figgenericPIIcontours}
\end{figure}

Any solution of \eqref{PIIlax1} is an entire function of $\zeta$. Let
$S_j, j=1,\ldots,6$ denote the sectors
%
\begin{equation}
S_j = \biggl\{ \zeta\in\C \dvtx \frac{2j-3}{6}\pi< \arg( \zeta)
< \frac{2j-1}{6} \pi \biggr\},
\end{equation}
and let $\Gamma_j$ denote the outwardly oriented boundary rays (see Figure \ref{figgenericPIIcontours})
%
\begin{equation}
\label{eq73} \Gamma_j = \biggl\{ \zeta\in\C \dvtx \arg(\zeta) =
\frac{2j-1}{6} \pi \biggr\}.
\end{equation}
There exists a unique solution $\Psi_j$ of \eqref{PIIlax1} such that
%
\begin{equation}
\Psi_j =I + \mathcal{O} \bigl( \zeta^{-1} \bigr)\qquad \mbox{as
} \zeta\to \infty\mbox{ in } S_j,
\end{equation}
and constants $a_j,  j=1,\ldots,6$ such that for $\zeta\in\Gamma_j$
%
\begin{eqnarray}
\Psi_{j+1}(\zeta) &=& \Psi_j(\zeta)
\tril{a_j e^{-2i(({4}/{3}) \zeta^3 + s \zeta)}},\qquad j \mbox{ odd, }
\nonumber
\\[-8pt]
\\[-8pt]
\nonumber
\Psi_{j+1}(\zeta) &=& \Psi_j(\zeta) \triu{a_j
e^{2i(({4}/{3}) \zeta^3 + s \zeta)}},\qquad  j \mbox{ even.} %
\end{eqnarray}
Additionally, the constants $a_j$ satisfy
%
\begin{equation}
a_{j+3} = a_j, \qquad a_1 a_2
a_3 +a_1 + a_2 +a_3 = 0.
\end{equation}
The parameters $a_j$ depend parametrically on $s,q$ and $r$; in \cite
{FN76} Flaschka and Newell showed that the isomonodromic deformations,
that is, the variations of these parameters that keep the Stokes
multipliers $a_j$ constant, are given by solutions of the Painlev\'e II
equation $q''(s) = s q + 2q(s)^3$ and $r(s) = q'(s)$.

Our particular interest is in the Hastings--McLeod solution of Painlev\'
e II \cite{HM}, which is the unique solution such that
%
\begin{eqnarray}
\label{HMasymp} q(s) &= &\Ai(s) \bigl( 1 + {o} ( 1 ) \bigr)\qquad \mbox {as } s \to
\infty,
\nonumber
\\[-8pt]
\\[-8pt]
\nonumber
 q(s) &\sim&\sqrt{-\frac{s}{2}} \qquad\mbox{as } s \to-\infty.
\end{eqnarray}
Let $\Psi(\zeta; s)$ be the solution of \eqref{PIIlax1} with
parameters $s, q=q(s)$ and $r=q'(s)$, where $q(s)$ is the
Hastings--McLeod solution, and let $\mathcal{P}$ denote the set of
poles of $q$ (of which there are infinitely many). Then $\bpsi(\zeta
,s)$ is defined and analytic for $\zeta\in\C\setminus(C_1 \cup
C_2)$ and $s \in\C\setminus\mathcal{P}$. It is known that there are
no poles of $q$ on the real line \cite{HM}. The Stokes multiplier for
the Hastings--McLeod solution are
%
\begin{equation}
a_1 = 1,\qquad a_2=0, \qquad a_3 = -1.
\end{equation}
If we reverse the orientation of $\Gamma_3$ and $\Gamma_4$ and define
$C_1 = \Gamma_1 \cup\Gamma_3$ and $C_2 = \Gamma_4 \cup\Gamma_6$ (see Figure \ref{figPIIcontours}), then
$\Psi(\zeta;s)$ solves the following RHP:
%
\begin{figure}

\includegraphics{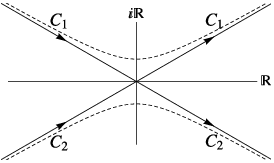}

\caption{The contours defining RHP \protect\ref{rhpPII} related to
the Hastings--McLeod solution of Painlev\'e II.
The contours can be deformed to the dashed
lines without changing the problem statement.} \label{figPIIcontours}
\end{figure}
%
%
\begin{rhp}[\textsc{(PII model RHP)}]\label{rhpPII}
Find a $2 \times2$ matrix $\Psi(\zeta;s)$ with the following properties:
\begin{longlist}[(1)]
\item[(1)]$\Psi(\zeta;s)$ is an analytic function of $\zeta$ for
$\zeta\in\C\setminus( C_1 \cup C_2 )$.
\item[(2)] $\Psi(\zeta;s)= I + \mathcal{O}  ( \zeta^{-1}
)$ as $\zeta\to
\infty$
and bounded as $\zeta\to0$.
\item[(3)] The boundary values $\Psi_\pm(\zeta;s)$ satisfy the jump
conditions
%
\begin{eqnarray}
\label{PIIjump} \cases{\displaystyle \Psi_+(\zeta;s) = \Psi_-(\zeta;s) \tril{
e^{2i\theta_{\mathit{PII}}}}, &\quad  $\zeta\in C_1,$\vspace*{3pt}
\cr
\displaystyle \Psi_+(\zeta;s) = \Psi_-(
\zeta;s) \triu{-e^{-2i\theta_{\mathit{PII}}}}, & \quad $\zeta\in C_2,$}
\end{eqnarray}
where
%
\begin{equation}
\theta_{\mathit{PII}} = \theta_{\mathit{PII}}(\zeta,s) = \tfrac{4}{3}
\zeta^3 +s \zeta\label{PIIphase}.
\end{equation}
\end{longlist}
\end{rhp}

We make two observations which we will need later. First, the
symmetries $-C_1 = C_2$ and $\theta_{\mathit{PII}}(-\zeta,s) = -\theta_{\mathit{PII}}(\zeta,s)$ imply that the solution $\Psi(\zeta,s)$ of RHP~\ref
{rhpPII} satisfies the symmetry
%
\begin{eqnarray}
\label{psisymmetry} \Psi(-\zeta,s) = \sigma_1 \Psi(\zeta,s)
\sigma_1,\qquad  \sigma_1 := \lleft(\matrix{ 0& 1
\cr
1& 0}
\rright).
\end{eqnarray}
The second fact is that $\Psi$ admits a uniformly expansion in the
limit as $\zeta\to\infty$ as described in \cite{DZ95}. Specifically,
we have
%
\begin{equation}
\Psi(\zeta;s) = I + \frac{\psi_1(s)}{\zeta}+ \frac{\psi_2(s)}{\zeta^2} +
\frac{\psi_3(s)}{\zeta^3} + \mathcal{O} \bigl( \zeta^{-4} \bigr).
\end{equation}
The error term $\mathcal{O}  ( \zeta^{-4}  )$ here depends
on $s$. For our
purpose, we need the dependence on $s$ for $s$ bounded below. An
analysis similar to Section~6 of \cite{DZ95} shows that given $s_0>0$,
there exists a constant $c_0>0$ such that
\begin{subequations}\label{PIIexpansion}
%
\begin{equation}
\label{PIIexpansion-a} \Psi(\zeta;s) = I + \frac{\psi_1(s)}{\zeta}+
\frac{\psi_2(s)}{\zeta^2} + \frac{\psi_3(s)}{\zeta^3} + \mathcal {O} \biggl(
\frac {e^{-c_0|s|^{3/2}}}{\zeta^{4}} \biggr).
\end{equation}
The moments $\psi_j(s)$ can be calculated recursively from inserting
the expansion into \eqref{PIIlax2}.
The first three moments are
%
\begin{eqnarray}
\label{PIIexpansion-b} %
\psi_1(s) &=&
\frac{1}{2i} \left[\matrix{ -u(s) & q(s)
\cr
-q(s) & u(s) } \right],
\nonumber\\
\psi_2(s) &=& \frac{1}{(2i)^2} \left[\matrix{
\frac{1}{2}u(s)^2 - \frac{1}{2}q(s)^2 & q(s)
u(s) - q'(s)
\cr
q(s)u(s) -q'(s) &
\frac{1}{2}u(s)^2 - \frac{1}{2}q(s)^2 }
\right],
\\
\psi_3(s) &= &\frac{1}{ (2i)^3} \left[\matrix{ \alpha(s) & -
\beta(s)
\cr
\beta(s) & -\alpha(s) } \right], \nonumber %
\end{eqnarray}
where
%
\begin{eqnarray}
u(s) &=& \int_\infty^s q(\xi)^2 \,d
\xi, \label{PIIudef}
\\
\alpha(s) &=& \frac{q(s)^2 u(s)}{2} -\frac{u(s)^3}{6} + \log F(s)^2
- \int_\infty^s q'(
\xi)^2 \,d \xi, \label{PIIexpansion-c}
\\
\beta(s) &=& q'(s)u(s) - q(s) \biggl(s+ \frac{q(s)^2}{2} +
\frac{u(s)^2}{2} \biggr). \label{PIIexpansion-d}
\end{eqnarray}
\end{subequations}
We note that the asymptotic analysis of the RHP for the Painlev\'e
equation implies that for a given $s_0>0$,
%
\begin{eqnarray}
\label{eqpsiallsmall} \psi_j(s)= \mathcal{O} \bigl(
e^{-c_0|s|^{3/2}} \bigr),\qquad  j=1,2,3,
\end{eqnarray}
where $c_0$ can be taken as the same constant in the error term
of~\eqref{PIIexpansion-a}.

\section*{Acknowledgments}
J.~B. would like to thank Mishko Mitkovshi for bringing his attention to
the paper of Chen, Deng, Du, Stanley and Yan during the summer graduate
workshop at MSRI. This initiated this paper.
We would also like the thank Christian Krattenthaler, Peter Miller,
Richard Stanley, Craig Tracy and Catherine Yan for helpful communications.

%


\printaddresses


\begin{thebibliography}{51}

\bibitem{AdlerFvM}
%
\begin{bmisc}[auto:STB|2012/12/19|13:34:42]
\bauthor{\bsnm{Adler},~\bfnm{M.}\binits{M.}},
\bauthor{\bsnm{Ferrari},~\bfnm{P.~L.}\binits{P.~L.}} \AND
\bauthor{\bparticle{van} \bsnm{Moerbeke},~\bfnm{P.}\binits{P.}}
(\byear{2010}).
\bhowpublished{Non-intersecting random walks in the neighborhood of a symmetric
tacnode. Available at arXiv:\arxivurl{1007.1163}}.
\bptok{imsref}%
\end{bmisc}
%
\endbibitem

\bibitem{1}
%
\begin{barticle}[mr]
\bauthor{\bsnm{Baik},~\bfnm{Jinho}\binits{J.}},
\bauthor{\bsnm{Borodin},~\bfnm{Alexei}\binits{A.}},
\bauthor{\bsnm{Deift},~\bfnm{Percy}\binits{P.}} \AND
\bauthor{\bsnm{Suidan},~\bfnm{Toufic}\binits{T.}}
(\byear{2006}).
\btitle{A model for the bus system in {C}uernavaca ({M}exico)}.
\bjournal{J. Phys. A}
\bvolume{39}
\bpages{8965--8975}.
\bid{doi={10.1088/0305-4470/39/28/S11}, issn={0305-4470}, mr={2240467}}
\bptok{imsref}%
\end{barticle}
%
\endbibitem

\bibitem{BBD}
%
\begin{barticle}[mr]
\bauthor{\bsnm{Baik},~\bfnm{Jinho}\binits{J.}},
\bauthor{\bsnm{Buckingham},~\bfnm{Robert}\binits{R.}} \AND
\bauthor{\bsnm{DiFranco},~\bfnm{Jeffery}\binits{J.}}
(\byear{2008}).
\btitle{Asymptotics of {T}racy--{W}idom distributions and the total
integral of
a {P}ainlev\'e {II} function}.
\bjournal{Comm. Math. Phys.}
\bvolume{280}
\bpages{463--497}.
\bid{doi={10.1007/s00220-008-0433-5}, issn={0010-3616}, mr={2395479}}
\bptok{imsref}%
\end{barticle}
%
\endbibitem

\bibitem{BDJ}
%
\begin{barticle}[mr]
\bauthor{\bsnm{Baik},~\bfnm{Jinho}\binits{J.}},
\bauthor{\bsnm{Deift},~\bfnm{Percy}\binits{P.}} \AND
\bauthor{\bsnm{Johansson},~\bfnm{Kurt}\binits{K.}}
(\byear{1999}).
\btitle{On the distribution of the length of the longest increasing subsequence
of random permutations}.
\bjournal{J. Amer. Math. Soc.}
\bvolume{12}
\bpages{1119--1178}.
\bid{doi={10.1090/S0894-0347-99-00307-0}, issn={0894-0347}, mr={1682248}}
\bptok{imsref}%
\end{barticle}
%
\endbibitem

\bibitem{BDR}
%
\begin{barticle}[mr]
\bauthor{\bsnm{Baik},~\bfnm{Jinho}\binits{J.}},
\bauthor{\bsnm{Deift},~\bfnm{Percy}\binits{P.}} \AND
\bauthor{\bsnm{Rains},~\bfnm{Eric}\binits{E.}}
(\byear{2001}).
\btitle{A {F}redholm determinant identity and the convergence of
moments for
random {Y}oung tableaux}.
\bjournal{Comm. Math. Phys.}
\bvolume{223}
\bpages{627--672}.
\bid{doi={10.1007/s002200100555}, issn={0010-3616}, mr={1866169}}
\bptok{imsref}%
\end{barticle}
%
\endbibitem

\bibitem{BKMM}
%
\begin{bbook}[mr]
\bauthor{\bsnm{Baik},~\bfnm{J.}\binits{J.}},
\bauthor{\bsnm{Kriecherbauer},~\bfnm{T.}\binits{T.}},
\bauthor{\bsnm{McLaughlin},~\bfnm{K.~T.~R.}\binits{K.~T.~R.}} \AND
\bauthor{\bsnm{Miller},~\bfnm{P.~D.}\binits{P.~D.}}
(\byear{2007}).
\btitle{Discrete Orthogonal Polynomials: Asymptotics and Applications}.
\bseries{Annals of Mathematics Studies}
\bvolume{164}.
\bpublisher{Princeton Univ. Press}, \blocation{Princeton, NJ}.
\bid{mr={2283089}}
\bptok{imsref}%
\end{bbook}
%
\endbibitem

\bibitem{BaikLiu}
%
\begin{bmisc}[auto:STB|2012/12/19|13:34:42]
\bauthor{\bsnm{Baik},~\bfnm{J.}\binits{J.}} \AND
\bauthor{\bsnm{Liu},~\bfnm{Z.}\binits{Z.}}
(\byear{2013}).
\bhowpublished{Discrete Topelitz/Hankel determinants and the width of non-intersecting
processes.
Available at \arxivurl{arXiv:1212.4467}.}
\bptok{imsref}%
\end{bmisc}
%
\endbibitem

\bibitem{2}
%
\begin{barticle}[mr]
\bauthor{\bsnm{Baik},~\bfnm{Jinho}\binits{J.}} \AND
\bauthor{\bsnm{Rains},~\bfnm{Eric~M.}\binits{E.~M.}}
(\byear{2001}).
\btitle{Algebraic aspects of increasing subsequences}.
\bjournal{Duke Math.~J.}
\bvolume{109}
\bpages{1--65}.
\bid{doi={10.1215/S0012-7094-01-10911-3}, issn={0012-7094}, mr={1844203}}
\bptok{imsref}%
\end{barticle}
%
\endbibitem

\bibitem{3}
%
\begin{barticle}[mr]
\bauthor{\bsnm{Baik},~\bfnm{Jinho}\binits{J.}} \AND
\bauthor{\bsnm{Rains},~\bfnm{Eric~M.}\binits{E.~M.}}
(\byear{2001}).
\btitle{The asymptotics of monotone subsequences of involutions}.
\bjournal{Duke Math.~J.}
\bvolume{109}
\bpages{205--281}.
\bid{doi={10.1215/S0012-7094-01-10921-6}, issn={0012-7094}, mr={1845180}}
\bptok{imsref}%
\end{barticle}
%
\endbibitem

\bibitem{4}
%
\begin{barticle}[mr]
\bauthor{\bsnm{Basor},~\bfnm{Estelle~L.}\binits{E.~L.}} \AND
\bauthor{\bsnm{Ehrhardt},~\bfnm{Torsten}\binits{T.}}
(\byear{2009}).
\btitle{Determinant computations for some classes of {T}oeplitz--{H}ankel
matrices}.
\bjournal{Oper. Matrices}
\bvolume{3}
\bpages{167--186}.
\bid{doi={10.7153/oam-03-09}, issn={1846-3886}, mr={2522773}}
\bptok{imsref}%
\end{barticle}
%
\endbibitem

\bibitem{Bornemann10}
%
\begin{barticle}[mr]
\bauthor{\bsnm{Bornemann},~\bfnm{Folkmar}\binits{F.}}
(\byear{2010}).
\btitle{Asymptotic independence of the extreme eigenvalues of {G}aussian
unitary ensemble}.
\bjournal{J. Math. Phys.}
\bvolume{51}
\bpages{023514, 8}.
\bid{doi={10.1063/1.3290968}, issn={0022-2488}, mr={2605065}}
\bptok{imsref}%
\end{barticle}
%
\endbibitem

\bibitem{Bornemann}
%
\begin{barticle}[mr]
\bauthor{\bsnm{Bornemann},~\bfnm{F.}\binits{F.}}
(\byear{2010}).
\btitle{On the numerical evaluation of distributions in random matrix
theory: A
review}.
\bjournal{Markov Process. Related Fields}
\bvolume{16}
\bpages{803--866}.
\bid{issn={1024-2953}, mr={2895091}}
\bptok{imsref}%
\end{barticle}
%
\endbibitem

\bibitem{BK}
%
\begin{bbook}[mr]
\bauthor{\bsnm{B{\"o}ttcher},~\bfnm{Albrecht}\binits{A.}} \AND
\bauthor{\bsnm{Karlovich},~\bfnm{Yuri~I.}\binits{Y.~I.}}
(\byear{1997}).
\btitle{Carleson Curves, {M}uckenhoupt Weights, and {T}oeplitz Operators}.
\bseries{Progress in Mathematics}
\bvolume{154}.
\bpublisher{Birkh\"auser}, \blocation{Basel}.
\bid{doi={10.1007/978-3-0348-8922-3}, mr={1484165}}
\bptok{imsref}%
\end{bbook}
%
\endbibitem

\bibitem{BMiller}
%
\begin{barticle}[mr]
\bauthor{\bsnm{Buckingham},~\bfnm{Robert~J.}\binits{R.~J.}} \AND
\bauthor{\bsnm{Miller},~\bfnm{Peter~D.}\binits{P.~D.}}
(\byear{2012}).
\btitle{The sine-{G}ordon equation in the {s}emiclassical limit: {C}ritical
behavior near a separatrix}.
\bjournal{J. Anal. Math.}
\bvolume{118}
\bpages{397--492}.
\bid{doi={10.1007/s11854-012-0041-3}, issn={0021-7670}, mr={3000688}}
\bptnote{check year}%
\bptok{imsref}%
\end{barticle}
%
\endbibitem

\bibitem{5}
%
\begin{barticle}[mr]
\bauthor{\bsnm{Chen},~\bfnm{William Y.~C.}\binits{W.~Y.~C.}},
\bauthor{\bsnm{Deng},~\bfnm{Eva Y.~P.}\binits{E.~Y.~P.}},
\bauthor{\bsnm{Du},~\bfnm{Rosena R.~X.}\binits{R.~R.~X.}},
\bauthor{\bsnm{Stanley},~\bfnm{Richard~P.}\binits{R.~P.}} \AND
\bauthor{\bsnm{Yan},~\bfnm{Catherine~H.}\binits{C.~H.}}
(\byear{2007}).
\btitle{Crossings and nestings of matchings and partitions}.
\bjournal{Trans. Amer. Math. Soc.}
\bvolume{359}
\bpages{1555--1575}.
\bid{doi={10.1090/S0002-9947-06-04210-3}, issn={0002-9947}, mr={2272140}}
\bptok{imsref}%
\end{barticle}
%
\endbibitem

\bibitem{CFU}
%
\begin{barticle}[mr]
\bauthor{\bsnm{Chester},~\bfnm{C.}\binits{C.}},
\bauthor{\bsnm{Friedman},~\bfnm{B.}\binits{B.}} \AND
\bauthor{\bsnm{Ursell},~\bfnm{F.}\binits{F.}}
(\byear{1957}).
\btitle{An extension of the method of steepest descents}.
\bjournal{Math. Proc. Cambridge Philos. Soc.}
\bvolume{53}
\bpages{599--611}.
\bid{mr={0090690}}
\bptok{imsref}%
\end{barticle}
%
\endbibitem

\bibitem{Choup2006}
%
\begin{barticle}[mr]
\bauthor{\bsnm{Choup},~\bfnm{Leonard~N.}\binits{L.~N.}}
(\byear{2006}).
\btitle{Edgeworth expansion of the largest eigenvalue distribution
function of
{GUE} and {LUE}}.
\bjournal{Int. Math. Res. Not. IMRN}
\bpages{Art. ID 61049, 32}.
\bid{doi={10.1155/IMRN/2006/61049}, issn={1073-7928}, mr={2233711}}
\bptok{imsref}%
\end{barticle}
%
\endbibitem

\bibitem{Choup2008}
%
\begin{barticle}[mr]
\bauthor{\bsnm{Choup},~\bfnm{Leonard~N.}\binits{L.~N.}}
(\byear{2008}).
\btitle{Edgeworth expansion of the largest eigenvalue distribution
function of
{G}aussian unitary ensemble revisited}.
\bjournal{J. Math. Phys.}
\bvolume{49}
\bpages{033508, 16}.
\bid{doi={10.1063/1.2873345}, issn={0022-2488}, mr={2406805}}
\bptok{imsref}%
\end{barticle}
%
\endbibitem

\bibitem{CK}
%
\begin{barticle}[mr]
\bauthor{\bsnm{Claeys},~\bfnm{Tom}\binits{T.}} \AND
\bauthor{\bsnm{Kuijlaars},~\bfnm{Arno B.~J.}\binits{A.~B.~J.}}
(\byear{2006}).
\btitle{Universality of the double scaling limit in random matrix models}.
\bjournal{Comm. Pure Appl. Math.}
\bvolume{59}
\bpages{1573--1603}.
\bid{doi={10.1002/cpa.20113}, issn={0010-3640}, mr={2254445}}
\bptok{imsref}%
\end{barticle}
%
\endbibitem

\bibitem{CorwinQR}
%
\begin{barticle}[auto:STB|2012/12/19|13:34:42]
\bauthor{\bsnm{Corwin},~\bfnm{I.}\binits{I.}},
\bauthor{\bsnm{Quastel},~\bfnm{J.}\binits{J.}} \AND
\bauthor{\bsnm{Remenik},~\bfnm{D.}\binits{D.}}
(\byear{2013}).
\btitle{Continuum statistics of the Airy2 process}.
\bjournal{Comm. Math. Phys.}
\bvolume{317}
\bpages{347--362}.
\bptok{imsref}%
\end{barticle}
%
\endbibitem

\bibitem{DKMVZa}
%
\begin{barticle}[mr]
\bauthor{\bsnm{Deift},~\bfnm{P.}\binits{P.}},
\bauthor{\bsnm{Kriecherbauer},~\bfnm{T.}\binits{T.}},
\bauthor{\bsnm{McLaughlin},~\bfnm{K.~T.~R.}\binits{K.~T.~R.}},
\bauthor{\bsnm{Venakides},~\bfnm{S.}\binits{S.}} \AND
\bauthor{\bsnm{Zhou},~\bfnm{X.}\binits{X.}}
(\byear{1999}).
\btitle{Strong asymptotics of orthogonal polynomials with respect to
exponential weights}.
\bjournal{Comm. Pure Appl. Math.}
\bvolume{52}
\bpages{1491--1552}.
\bid
{doi={10.1002/(SICI)1097-0312(199912)52:12\&lt;1491::AID-CPA2\&gt;3.3.CO;2-R
}, issn={0010-3640}, mr={1711036}}
\bptok{imsref}%
\end{barticle}
%
\endbibitem

\bibitem{DKMVZb}
%
\begin{barticle}[mr]
\bauthor{\bsnm{Deift},~\bfnm{P.}\binits{P.}},
\bauthor{\bsnm{Kriecherbauer},~\bfnm{T.}\binits{T.}},
\bauthor{\bsnm{McLaughlin},~\bfnm{K.~T.~R.}\binits{K.~T.~R.}},
\bauthor{\bsnm{Venakides},~\bfnm{S.}\binits{S.}} \AND
\bauthor{\bsnm{Zhou},~\bfnm{X.}\binits{X.}}
(\byear{1999}).
\btitle{Uniform asymptotics for polynomials orthogonal with respect to varying
exponential weights and applications to universality questions in random
matrix theory}.
\bjournal{Comm. Pure Appl. Math.}
\bvolume{52}
\bpages{1335--1425}.
\bid
{doi={10.1002/(SICI)1097-0312(199911)52:11\&lt;1335::AID-CPA1\&gt;3.0.CO;2-1
}, issn={0010-3640}, mr={1702716}}
\bptok{imsref}%
\end{barticle}
%
\endbibitem

\bibitem{DZ95}
%
\begin{barticle}[mr]
\bauthor{\bsnm{Deift},~\bfnm{P.~A.}\binits{P.~A.}} \AND
\bauthor{\bsnm{Zhou},~\bfnm{X.}\binits{X.}}
(\byear{1995}).
\btitle{Asymptotics for the {P}ainlev\'e {II} equation}.
\bjournal{Comm. Pure Appl. Math.}
\bvolume{48}
\bpages{277--337}.
\bid{doi={10.1002/cpa.3160480304}, issn={0010-3640}, mr={1322812}}
\bptok{imsref}%
\end{barticle}
%
\endbibitem

\bibitem{ElKaroui}
%
\begin{barticle}[mr]
\bauthor{\bsnm{El~Karoui},~\bfnm{Noureddine}\binits{N.}}
(\byear{2006}).
\btitle{A rate of convergence result for the largest eigenvalue of complex
white {W}ishart matrices}.
\bjournal{Ann. Probab.}
\bvolume{34}
\bpages{2077--2117}.
\bid{doi={10.1214/009117906000000502}, issn={0091-1798}, mr={2294977}}
\bptok{imsref}%
\end{barticle}
%
\endbibitem

\bibitem{FerrariFrings}
%
\begin{barticle}[mr]
\bauthor{\bsnm{Ferrari},~\bfnm{Patrik~L.}\binits{P.~L.}} \AND
\bauthor{\bsnm{Frings},~\bfnm{Ren{\'e}}\binits{R.}}
(\byear{2011}).
\btitle{Finite time corrections in {KPZ} growth models}.
\bjournal{J. Stat. Phys.}
\bvolume{144}
\bpages{1123--1150}.
\bid{doi={10.1007/s10955-011-0318-4}, issn={0022-4715}, mr={2841918}}
\bptok{imsref}%
\end{barticle}
%
\endbibitem

\bibitem{FN76}
%
\begin{barticle}[mr]
\bauthor{\bsnm{Flaschka},~\bfnm{Hermann}\binits{H.}} \AND
\bauthor{\bsnm{Newell},~\bfnm{Alan~C.}\binits{A.~C.}}
(\byear{1980}).
\btitle{Monodromy- and spectrum-preserving deformations. {I}}.
\bjournal{Comm. Math. Phys.}
\bvolume{76}
\bpages{65--116}.
\bid{issn={0010-3616}, mr={0588248}}
\bptok{imsref}%
\end{barticle}
%
\endbibitem

\bibitem{FIK}
%
\begin{barticle}[mr]
\bauthor{\bsnm{Fokas},~\bfnm{A.~S.}\binits{A.~S.}},
\bauthor{\bsnm{Its},~\bfnm{A.~R.}\binits{A.~R.}} \AND
\bauthor{\bsnm{Kitaev},~\bfnm{A.~V.}\binits{A.~V.}}
(\byear{1992}).
\btitle{The isomonodromy approach to matrix models in {$2$}{D} quantum
gravity}.
\bjournal{Comm. Math. Phys.}
\bvolume{147}
\bpages{395--430}.
\bid{issn={0010-3616}, mr={1174420}}
\bptok{imsref}%
\end{barticle}
%
\endbibitem

\bibitem{Friedman}
%
\begin{barticle}[mr]
\bauthor{\bsnm{Friedman},~\bfnm{B.}\binits{B.}}
(\byear{1959}).
\btitle{Stationary phase with neighboring critical points}.
\bjournal{J. Soc. Indust. Appl. Math.}
\bvolume{7}
\bpages{280--289}.
\bid{mr={0109272}}
\bptok{imsref}%
\end{barticle}
%
\endbibitem

\bibitem{Gessel}
%
\begin{barticle}[mr]
\bauthor{\bsnm{Gessel},~\bfnm{Ira~M.}\binits{I.~M.}}
(\byear{1990}).
\btitle{Symmetric functions and {P}-recursiveness}.
\bjournal{J. Combin. Theory Ser. A}
\bvolume{53}
\bpages{257--285}.
\bid{doi={10.1016/0097-3165(90)90060-A}, issn={0097-3165}, mr={1041448}}
\bptok{imsref}%
\end{barticle}
%
\endbibitem

\bibitem{Golinskii}
%
\begin{barticle}[mr]
\bauthor{\bsnm{Golinski{\u\i}},~\bfnm{L.~B.}\binits{L.~B.}}
(\byear{2006}).
\btitle{Schur flows and orthogonal polynomials on the unit circle}.
\bjournal{Mat. Sb.}
\bvolume{197}
\bpages{41--62}.
\bid{doi={10.1070/SM2006v197n08ABEH003792}, issn={0368-8666}, mr={2272777}}
\bptok{imsref}%
\end{barticle}
%
\endbibitem

\bibitem{9}
%
\begin{barticle}[mr]
\bauthor{\bsnm{Grabiner},~\bfnm{David~J.}\binits{D.~J.}}
(\byear{1999}).
\btitle{Brownian motion in a {W}eyl chamber, non-colliding particles, and
random matrices}.
\bjournal{Ann. Inst. Henri Poincar\'e Probab. Stat.}
\bvolume{35}
\bpages{177--204}.
\bid{doi={10.1016/S0246-0203(99)80010-7}, issn={0246-0203}, mr={1678525}}
\bptok{imsref}%
\end{barticle}
%
\endbibitem

\bibitem{HM}
%
\begin{barticle}[mr]
\bauthor{\bsnm{Hastings},~\bfnm{S.~P.}\binits{S.~P.}} \AND
\bauthor{\bsnm{McLeod},~\bfnm{J.~B.}\binits{J.~B.}}
(\byear{1980}).
\btitle{A boundary value problem associated with the second {P}ainlev\'e
transcendent and the {K}orteweg--de {V}ries equation}.
\bjournal{Arch. Ration. Mech. Anal.}
\bvolume{73}
\bpages{31--51}.
\bid{doi={10.1007/BF00283254}, issn={0003-9527}, mr={0555581}}
\bptok{imsref}%
\end{barticle}
%
\endbibitem

\bibitem{10}
%
\begin{barticle}[mr]
\bauthor{\bsnm{Johansson},~\bfnm{Kurt}\binits{K.}}
(\byear{1998}).
\btitle{The longest increasing subsequence in a random permutation and a
unitary random matrix model}.
\bjournal{Math. Res. Lett.}
\bvolume{5}
\bpages{63--82}.
\bid{issn={1073-2780}, mr={1618351}}
\bptok{imsref}%
\end{barticle}
%
\endbibitem

\bibitem{Johansson02}
%
\begin{barticle}[mr]
\bauthor{\bsnm{Johansson},~\bfnm{Kurt}\binits{K.}}
(\byear{2002}).
\btitle{Non-intersecting paths, random tilings and random matrices}.
\bjournal{Probab. Theory Related Fields}
\bvolume{123}
\bpages{225--280}.
\bid{doi={10.1007/s004400100187}, issn={0178-8051}, mr={1900323}}
\bptok{imsref}%
\end{barticle}
%
\endbibitem

\bibitem{Johansson03}
%
\begin{barticle}[mr]
\bauthor{\bsnm{Johansson},~\bfnm{Kurt}\binits{K.}}
(\byear{2003}).
\btitle{Discrete polynuclear growth and determinantal processes}.
\bjournal{Comm. Math. Phys.}
\bvolume{242}
\bpages{277--329}.
\bid{issn={0010-3616}, mr={2018275}}
\bptok{imsref}%
\end{barticle}
%
\endbibitem

\bibitem{Johansson05}
%
\begin{barticle}[mr]
\bauthor{\bsnm{Johansson},~\bfnm{Kurt}\binits{K.}}
(\byear{2005}).
\btitle{The arctic circle boundary and the {A}iry process}.
\bjournal{Ann. Probab.}
\bvolume{33}
\bpages{1--30}.
\bid{doi={10.1214/009117904000000937}, issn={0091-1798}, mr={2118857}}
\bptok{imsref}%
\end{barticle}
%
\endbibitem

\bibitem{Johnstone}
%
\begin{barticle}[mr]
\bauthor{\bsnm{Johnstone},~\bfnm{Iain~M.}\binits{I.~M.}}
(\byear{2008}).
\btitle{Multivariate analysis and {J}acobi ensembles: Largest eigenvalue,
{T}racy--{W}idom limits and rates of convergence}.
\bjournal{Ann. Statist.}
\bvolume{36}
\bpages{2638--2716}.
\bid{doi={10.1214/08-AOS605}, issn={0090-5364}, mr={2485010}}
\bptok{imsref}%
\end{barticle}
%
\endbibitem

\bibitem{JohnstoneMa}
%
\begin{bmisc}[auto:STB|2012/12/19|13:34:42]
\bauthor{\bsnm{Johnstone},~\bfnm{I.~M.}\binits{I.~M.}} \AND
\bauthor{\bsnm{Ma},~\bfnm{Z.}\binits{Z.}}
(\byear{2011}).
\bhowpublished{Fast approach to the Tracy--Widom law at the edge of
GOE and
GUE. Available at arXiv:\arxivurl{1110.0108}}.
\bptok{imsref}%
\end{bmisc}
%
\endbibitem

\bibitem{KMM}
%
\begin{bbook}[mr]
\bauthor{\bsnm{Kamvissis},~\bfnm{Spyridon}\binits{S.}},
\bauthor{\bsnm{McLaughlin},~\bfnm{Kenneth D. T.~R.}\binits{K.~D.
T.~R.}} \AND
\bauthor{\bsnm{Miller},~\bfnm{Peter~D.}\binits{P.~D.}}
(\byear{2003}).
\btitle{Semiclassical Soliton Ensembles for the Focusing Nonlinear
{S}chr\"odinger Equation}.
\bseries{Annals of Mathematics Studies}
\bvolume{154}.
\bpublisher{Princeton Univ. Press}, \blocation{Princeton, NJ}.
\bid{mr={1999840}}
\bptok{imsref}%
\end{bbook}
%
\endbibitem

\bibitem{11}
%
\begin{barticle}[mr]
\bauthor{\bsnm{Karlin},~\bfnm{Samuel}\binits{S.}} \AND
\bauthor{\bsnm{McGregor},~\bfnm{James}\binits{J.}}
(\byear{1959}).
\btitle{Coincidence probabilities}.
\bjournal{Pacific J. Math.}
\bvolume{9}
\bpages{1141--1164}.
\bid{issn={0030-8730}, mr={0114248}}
\bptok{imsref}%
\end{barticle}
%
\endbibitem

\bibitem{OConnell02}
%
\begin{barticle}[mr]
\bauthor{\bsnm{K{\"o}nig},~\bfnm{Wolfgang}\binits{W.}},
\bauthor{\bsnm{O'Connell},~\bfnm{Neil}\binits{N.}} \AND
\bauthor{\bsnm{Roch},~\bfnm{S{\'e}bastien}\binits{S.}}
(\byear{2002}).
\btitle{Non-colliding random walks, tandem queues, and discrete orthogonal
polynomial ensembles}.
\bjournal{Electron. J. Probab.}
\bvolume{7}
\bpages{24 pp. (electronic)}.
\bid{issn={1083-6489}, mr={1887625}}
\bptok{imsref}%
\end{barticle}
%
\endbibitem

\bibitem{13}
%
\begin{barticle}[mr]
\bauthor{\bsnm{Krattenthaler},~\bfnm{C.}\binits{C.}}
(\byear{2006}).
\btitle{Growth diagrams, and increasing and decreasing chains in
fillings of
{F}errers shapes}.
\bjournal{Adv. in Appl. Math.}
\bvolume{37}
\bpages{404--431}.
\bid{doi={10.1016/j.aam.2005.12.006}, issn={0196-8858}, mr={2261181}}
\bptok{imsref}%
\end{barticle}
%
\endbibitem

\bibitem{Ma}
%
\begin{barticle}[mr]
\bauthor{\bsnm{Ma},~\bfnm{Zongming}\binits{Z.}}
(\byear{2012}).
\btitle{Accuracy of the {T}racy--{W}idom limits for the extreme
eigenvalues in
white {W}ishart matrices}.
\bjournal{Bernoulli}
\bvolume{18}
\bpages{322--359}.
\bid{doi={10.3150/10-BEJ334}, issn={1350-7265}, mr={2888709}}
\bptok{imsref}%
\end{barticle}
%
\endbibitem

\bibitem{MFQuastelR}
%
\begin{barticle}[mr]
\bauthor{\bsnm{Moreno Flores},~\bfnm{G.}\binits{G.}},
\bauthor{\bsnm{Quastel},~\bfnm{J.}\binits{J.}} \AND
\bauthor{\bsnm{Remenik},~\bfnm{D.}\binits{D.}}
(\byear{2013}).
\btitle{Endpoint distribution of directed polymers in $1+1$ dimensions}.
\bjournal{Comm. Math. Phys.}
\bvolume{317}
\bpages{363--380}.
\bptok{imsref}%
\end{barticle}
\endbibitem

\bibitem{Nenciu}
%
\begin{barticle}[mr]
\bauthor{\bsnm{Nenciu},~\bfnm{Irina}\binits{I.}}
(\byear{2005}).
\btitle{Lax pairs for the {A}blowitz--{L}adik system via orthogonal polynomials
on the unit circle}.
\bjournal{Int. Math. Res. Not. IMRN}
\bvolume{11}
\bpages{647--686}.
\bid{doi={10.1155/IMRN.2005.647}, issn={1073-7928}, mr={2146324}}
\bptok{imsref}%
\end{barticle}
%
\endbibitem




\bibitem{Rains}
%
\begin{barticle}[mr]
\bauthor{\bsnm{Rains},~\bfnm{E.~M.}\binits{E.~M.}}
(\byear{1998}).
\btitle{Increasing subsequences and the classical groups}.
\bjournal{Electron. J. Combin.}
\bvolume{5}
\bpages{Research Paper 12, 9 pp. (electronic)}.
\bid{issn={1077-8926}, mr={1600095}}
\bptok{imsref}%
\end{barticle}
%
\endbibitem

\bibitem{STracy}
%
\begin{barticle}[mr]
\bauthor{\bsnm{Shinault},~\bfnm{Gregory}\binits{G.}} \AND
\bauthor{\bsnm{Tracy},~\bfnm{Craig~A.}\binits{C.~A.}}
(\byear{2011}).
\btitle{Asymptotics for the covariance of the {$\mathrm{Airy}_2$} process}.
\bjournal{J. Stat. Phys.}
\bvolume{143}
\bpages{60--71}.
\bid{doi={10.1007/s10955-011-0155-5}, issn={0022-4715}, mr={2787973}}
\bptok{imsref}%
\end{barticle}
%
\endbibitem

\bibitem{15}
%
\begin{bincollection}[mr]
\bauthor{\bsnm{Stanley},~\bfnm{Richard~P.}\binits{R.~P.}}
(\byear{2007}).
\btitle{Increasing and decreasing subsequences and their variants}.
In \bbooktitle{International {C}ongress of {M}athematicians. {V}ol. {I}}
\bpages{545--579}.
\bpublisher{Eur. Math. Soc.}, \blocation{Z\"urich}.
\bid{doi={10.4171/022-1/21}, mr={2334203}}
\bptok{imsref}%
\end{bincollection}
%
\endbibitem

\bibitem{Szego}
%
\begin{bbook}[mr]
\bauthor{\bsnm{Szeg{\H{o}}},~\bfnm{G{\'a}bor}\binits{G.}}
(\byear{1975}).
\btitle{Orthogonal Polynomials},
\bedition{4th} ed.
\bpublisher{Amer. Math. Soc.}, \blocation{Providence, RI.}
\bid{mr={0372517}}
\bptok{imsref}%
\end{bbook}
%
\endbibitem

\bibitem{16}
%
\begin{barticle}[mr]
\bauthor{\bsnm{Tracy},~\bfnm{Craig~A.}\binits{C.~A.}} \AND
\bauthor{\bsnm{Widom},~\bfnm{Harold}\binits{H.}}
(\byear{1996}).
\btitle{On orthogonal and symplectic matrix ensembles}.
\bjournal{Comm. Math. Phys.}
\bvolume{177}
\bpages{727--754}.
\bid{issn={0010-3616}, mr={1385083}}
\bptok{imsref}%
\end{barticle}
%
\endbibitem

\end{thebibliography}
\end{document}